\def\namedlabel#1#2{\begingroup
   \def\@currentlabel{#2}%
   \label{#1}\endgroup
}
\theoremstyle{plain}
\newtheorem{proposition}{Proposition}[section]
  \crefname{IntroductionClaim}{proposition}{propositions}
\newtheorem{lemma}[proposition]{Lemma}
\newtheorem{theorem}[proposition]{Theorem}
\newtheorem*{theorem*}{Theorem}
\theoremstyle{definition}
\newtheorem{definition}[proposition]{Definition}
\newtheorem{openproblem}{Open problem}
\theoremstyle{remark}
\numberwithin{equation}{section}
\newcommand{\defeq}{\triangleq}
\let\expandafter\oldproof\csname\string\proof\endcsname
\let\oldendproof\endproof
\renewenvironment{proof}[1][\proofname]{%
  \oldproof[\resetconstant{#1}.]%
}{\resetconstant \oldendproof}
\title{Vortex motion for the lake equations}
\date{\today}
\subjclass[2010]{
 76B47 %
(%
35D30%
, 35Q31%
, %
35Q35%
, %
 76B03%
)
  }
\keywords{Shallow water equations, bathymetry, vorticity formulation, Lorentz space.}
\author{Justin Dekeyser}
\address[J. Dekeyser and J. Van Schaftingen]{Universit\'e catholique de Louvain (UCLouvain)\\ 
Institut de Recherche en Math\'ematique et Physique\\
Chemin du Cyclotron 2 bte L7.01.01\\
1348 Louvain-la-Neuve\\
Belgium}
\author{Jean Van Schaftingen}
\email[Jean Van Schaftingen]{Jean.VanSchaftingen@uclouvain.be}
\thanks{\emph{ORCID:} 
  \href{https://orcid.org/0000-0002-8805-6361}{https://orcid.org/0000-0002-8805-6361} (J. Dekeyser),
  \href{https://orcid.org/0000-0002-5797-9358}{https://orcid.org/0000-0002-5797-9358} (J. Van Schaftingen)}
\newcommand{\reals}{\mathbb{R}}
\newcommand{\plane}{\reals^2}
\newcommand{\integers}{\mathbb{N}}
\DeclareMathOperator{\du}{d}
\newcommand{\dif}{{\,\du}}
\newcommand{\crossproduct}[2]{{#1}\times{#2}}
\newcommand{\norm}[2]{\Vert{#1}\Vert_{#2}}
\newcommand{\normal}{\boldsymbol{\nu}}
\newcommand{\tangent}{\boldsymbol{\tau}}
\newcommand{\domain}{D}
\newcommand{\depth}{b}
\newcommand{\closure}[1]{\Bar{#1}}
\newcommand{\island}{I}
\newcommand{\nbislands}{m}
\newcommand{\gradient}{\nabla}
\newcommand{\flipgradient}{\gradient^\perp}
\newcommand{\divergence}{\nabla \cdot}
\newcommand{\curl}{\nabla \times}
\DeclareMathOperator{\dist}{dist}
\DeclareMathOperator{\diam}{diam}
\newcommand{\abs}[1]{\lvert #1 \rvert}
\newcommand{\bigabs}[1]{\bigl\lvert #1 \bigr\rvert}
\newcommand{\Bigabs}[1]{\Bigl\lvert #1 \Bigr\rvert}
\newcommand{\biggabs}[1]{\biggl\lvert #1 \biggr\rvert}
\newcommand{\strabs}[1]{\left \lvert #1 \right \rvert}
\newcommand{\st}{\;\vert\;}
\newcommand{\positivepart}[1]{\left(#1\right)_+}
\newcommand{\vortex}{\omega}
\newcommand{\velocity}{\mathbf{u}}
\newcommand{\boundary}{\partial}
\newcommand{\distance}{\dist}
\newcommand{\energy}{E}
\newcommand{\weakto}{\rightharpoonup}
\newcommand{\vortexstrength}{\Gamma}
\newcommand{\totalvorticity}{\Omega}
\newcommand{\lorentzspace}{\Lambda_{(\ln)_+,1}}
\newcommand{\lorentznorm}[1]{\norm{#1}{\lorentzspace}}
\newcommand{\greenlaplace}[1][]{%
\ifthenelse{\equal{#1}{}}{G_{\domain}}{G_{#1}}%
}
\newcommand{\scalarproduct}[3]{%
\ifthenelse{\equal{#3}{\plane}}{{#1}\cdot{#2}}{({#1}|{#2})_{#3}}%
}
\newcommand{\typscale}[1][]{%
\ifthenelse{\equal{#1}{}}{{\mathfrak{s}}}{{\mathfrak{s}_{#1}}}%
}
\newcommand{\spacetypscale}[1][]{%
\ifthenelse{\equal{#1}{}}{{\rho}}{{\rho^{#1}}}%
}
\newcommand{\timetypscale}[1][]{%
\ifthenelse{\equal{#1}{}}{(\vortexstrength\typscale)}{(\vortexstrength_{#1}\typscale_{#1})}%
}
\newcommand{\scaling}[2][]{%
\ifthenelse{\equal{#1}{}}{\mathcal{Z}_\epsilon\left(#2\right)}{\mathcal{Z}_{#1}\left(#2\right)}%
}
\begin{document}

\begin{abstract}
The lake equations
\begin{equation*}
  \left\{
    \begin{aligned}
      \nabla \cdot 
      \big( b \, \mathbf{u}\big) &= 0
        & & \text{on}\ \mathbb{R}\times D , \\
      \partial_t\mathbf{u} + (\mathbf{u}\cdot\gradient)\mathbf{u} &= -\gradient h
        & & \text{on}\ \mathbb{R}\times D , \\
      \mathbf{u} \cdot \boldsymbol{\nu} &= 0
        & & \text{on}\ \mathbb{R}\times\partial D .
    \end{aligned} 
  \right.
\end{equation*}
model the vertically averaged horizontal velocity in an inviscid incompressible flow of a fluid in a basin whose variable depth $b : D \to [0, + \infty)$ is small in comparison to the size of its two-dimensional projection $D \subset \mathbb{R}^2$.   
When the depth $b$ is positive everywhere in $D$ and constant on the boundary, we prove that the vorticity and energy of solutions of the lake equations whose initial vorticity concentrates at an interior point behaves asympotically a multiple of a Dirac mass whose motion is governed by the depth function $b$.
\end{abstract}

\maketitle

\section{Introduction}

The \emph{lake equations} model an incompressible inviscid flow of a fluid in a lake 
whose velocity varies on distances whose scale is large compared to the depth (shallow water) and is small compared to the speed of gravity waves (small \emph{Froude number}: \(\mathrm{Fr} \ll 1 \)) \cite{Camassa_Holm_Levermore_1997}*{(1.1)}.
Mathematically, the lake is modelled by its projection of its volume on a horizontal planar open set \(\domain\subseteq\plane\) and by a positive depth function \(\depth : \domain \to (0, + \infty)\); the \emph{velocity field} \(\velocity:\reals\times\domain\to\plane\) and the \emph{surface height} \(h:\reals\times\domain\to\reals\)
are governed by the system of equations
  \begin{equation}
  \label{eqLake}
  \left\{
    \begin{aligned}
      \divergence\big( \depth\,\velocity \big) &= 0
        & & \text{on}\ \reals\times\domain , \\
      \partial_t\velocity + (\velocity\cdot\gradient)\velocity &= -\gradient h
        & & \text{on}\ \reals\times\domain , \\
      \velocity\cdot\normal &= 0
        & & \text{on}\ \reals\times\boundary\domain ,
    \end{aligned} 
  \right.
  \end{equation}
where \(\normal\) denotes the outgoing normal vector at the boundary \(\partial D\) of the domain \(\domain\).
The equations \eqref{eqLake} express respectively the conservation of mass, the conservation of momentum and the impermeability of the boundary \(\partial D\). 
In particular when the depth \(b\) is constant on the domain \(D\), the lake equations \eqref{eqLake} reduce to the two-dimensional Euler equations of inviscid incompressible flows.
The lake equations \eqref{eqLake} can be derived formally from the three-dimensional Euler equations \cite{Camassa_Holm_Levermore_1997} and have been justified mathematically in the periodic case \cite{Oliver_1997b}.
They appear in the mean-field limit for the Gross--Pitaevskii equation, which is the Schr\"odinger flow for the Ginzburg--Landau energy, under forcing and pinning \cite{Duerinckx_Serfaty}.
Weak solutions of the Cauchy problem for the lake equations \eqref{eqLake} exist globally \citelist{\cite{Levermore_Oliver_Titi_1996}\cite{Lacave_Pausader_Nguyen_2014}\cite{Munteanu_2012}\cite{Huang_2013}\cite{Oliver_1997}}; these solutions are unique \citelist{\cite{Bresch_Metivier_2006}\cite{Lacave_Pausader_Nguyen_2014}\cite{Munteanu_2012}} and as smooth as the data permits it (\citelist{\cite{Huang_2013} \cite{Levermore_Oliver_1997}\cite{Oliver_1997}} and \cref{appendix_regularity} at the end of the present work).

The \emph{vorticity} \(\vortex=\curl\velocity\) of a flow governed by the lake equations 
\eqref{eqLake} obeys the \emph{vorticity equation}
  \begin{equation}
  \label{eq_vorticity}
        \partial_t\vortex
      + 
        \scalarproduct{\depth\, \velocity}{\gradient\Big( \frac{\vortex}{\depth} \Big)}{\plane} 
      = 0 
    \qquad \text{in \(D\)}
    .
\end{equation}
For the \emph{planar Euler equation}, corresponding to constant depth \(b\), the vorticity equation \eqref{eq_vorticity} has been known since the works of Helmholtz, Kirchhoff and Routh to have singular vortex-point solutions whose vorticity is a linear combination of Dirac deltas whose position is governed by a dynamical system whose Hamiltonian is the \emph{Kirchhoff--Routh stream function} \citelist{\cite{Helmholtz_1867}*{\S 5}\cite{Kirchhoff_1876}*{Zwanzigste Vörlesung, \S 2--3}\cite{Routh_1880}*{\S 23}}.
These vortex point solutions are merely distributional solutions of the Euler equations;  since the works of Scheffer and Shirelman \citelist{\cite{Scheffer_1993}\cite{Shnirelman_1997}\cite{Shirelman_2000}}, the latter are known to exhibit unphysical behaviours in general.
In a seminal work, Marchioro and Pulvirenti have proved mathematically that the singular vortex-point solutions are in fact the \emph{limits of solutions} of the planar Euler equations whose initial data's vorticity concentrates into Dirac masses \cite{Marchioro_Pulvirenti_1983}.

For the lake equations \eqref{eqLake}, Richardson computed by \emph{formal matched asymptotics} that the position \(q : \reals \to \domain\) of a vortex of vorticity \(\Gamma\) and its typical radius \(\varepsilon\) 
should evolve according to the law \cite{Richardson_2000}*{(5.1)}
  \begin{equation}
  \label{eq_Richardson}
      \dot{q}(t) 
    \simeq
      \frac{\Gamma}{4\pi}\,
      \ln\frac{1}{\epsilon}\,
      \bigl(\flipgradient\ln\depth \bigr)\bigl(q(t)\bigr),
  \end{equation}
where the orthogonal gradient is defined \(\flipgradient \ln \depth = (\partial_2 \ln \depth, -\partial_1 \ln \depth)\).
A similar law was derived from axisymmetric Euler flow and verified \emph{experimentally} for vortex dipoles moving towards a planar sloping beach \citelist{\cite{Centurioni_2002}\cite{Peregrine_1998}} and was tested \emph{numerically} on barred beaches  \cite{Buhler_Jacobson_2001} in order to understand the \emph{rip currents} which represent a hazard to swimmers. 
As a consequence of the law \eqref{eq_Richardson}, vortex points should follow at the leading order the level sets of the bathymetry \(\depth\). 
In comparison with the planar Euler equation, the velocity of a vortex depends on its radius and the dominant term is local: it interacts at the leading order neither with the boundary nor with vortices that remain at a positive distance. 
These formal, experimental and numerical results raise the question whether the evolution law \eqref{eq_Richardson} is mathematically the limiting behaviour of families of solutions to the lake equations \eqref{eqLake}.

In the \emph{stationary case} for the lake equations, where the velocity \(\velocity\) does not depend on the time \(t\)  \eqref{eqLake}, there exist families of stationary solutions concentrated at a point of maximal depth or at a point where the irrotational flow generated by a boundary condition of order \(\ln \frac{1}{\varepsilon}\) balances the diverging motion of \eqref{eq_Richardson} \citelist{\cite{deValeriola_VanSchaftingen_2013}\cite{Dekeyser1}\cite{Dekeyser2}}.
(Corresponding results were already known for the planar Euler equations \citelist{\cite{Berger_Fraenkel_1980}\cite{TurkingtonSteady1}\cite{TurkingtonSteady2}\cite{Burton_1988}\cite{VanSchaftingen_Smets_2010}}.)
This approach also yields a rotating singular vortex pair in a rotation-invariant lake \cite{Dekeyser1}.

When \(D = (0, +\infty) \times \reals \) and \(b (r, z) = r\), the lake equations \eqref{eqLake} is in fact the axisymmetric three-dimensional Euler equation.
A single vortex ring is known formally to evolve according to \eqref{eq_Richardson} since the work of Helmholtz and Kelvin \cite{Helmholtz_1867}*{\S 6 and letter from Thompson}, which is a particular case of Da Rios law of evolution of three-dimensional vortices by a binormal curvature flow rescaled by a factor \(\ln \frac{1}{\varepsilon}\) (Da Rios law \citelist{\cite{DaRios_1906}\cite{Ricca_1996}}, see \cite{Banica_Miot_2013}*{\S 2.1} for a derivation in modern formalism).
Benedetto, Caglioti and Marchioro have proved that axisymmetric flows whose initial vorticity concentrates on a vortex ring satisfy asympotically this law \cite{Benedetto_Gaglioti_Marchioro_2000}. For arbitrary filaments, Jerrard and Seis have proved the asymptotic binormal curvature flow under some hypotheses on the solution of the three-dimensional equation \cite{Jerrard_Seis_2017}.

\medbreak

In order to state our main result describing flows whose initial vorticity is concentrating by their bathymetry, we rely on two conserved integral quantities of the flow: the
\emph{vortex circulation} of the flow at time \(t \in \reals\)
  \begin{equation}
  \label{eqVortexCirculation}
      \vortexstrength(t) 
    \defeq 
      \int_{\domain}\vortex(t),
  \end{equation}
and the \emph{kinetic energy} at time \(t \in \reals\),
  \begin{equation}
  \label{eq_def_energy}
      \energy(t) 
    \defeq   
      \frac{1}{2}\int_{\domain}
        \abs{\velocity(t)}^2
        \,
        \depth 
        \,
        ,
  \end{equation}
which are independent of the time \(t \in \reals\) when \(\velocity\) is a classical solution of the lake equation
and for which we will henceforth drop the time-dependence in the notation.

Our main result characterizes the asymptotic behaviour of solutions when the vorticity of the initial data shrinks to a Dirac mass:

\begin{theorem}
\label{truncatedEvolutionLaw}
Let \(\domain\subseteq\plane\) be a bounded domain of class \(C^2\) and \(b \in C^2(\closure{\domain}, (0, +\infty))\).
Assume that \(b\) is constant on each component of \(\partial D\).
If 
\begin{enumerate}[(a)]
 \item \((\velocity^n, h^n)_{n > 0} \in C^1 (\reals \times \Bar{\domain}, \reals^2) \times C (\reals \times \Bar{\domain})\) is a family of classical solutions to the lake equations \eqref{eqLake},
 \item \(\vortex^n (0) \ge 0\) everywhere in \(\domain\),
 \item \(
  \frac{1}{\vortexstrength^n} \vortex^n (0)
 \weakto 
 \delta_{q_0}\) narrowly as measures for some  \(q_0 \in \domain\),
 \item 
  \label{eq_ass_f}
 \(\vortex^n (0) \le C\,\vortexstrength^n \exp \frac{8 \pi \energy^n}{\vortexstrength^n  \,\totalvorticity^n (0)}\) everywhere in \(D\), for some constant $C>0$ independent on \(n\),
\end{enumerate}
then for every \(s \in \reals\),
\begin{align*}
 \frac{1}{\vortexstrength^n} \omega^n \Bigl(\frac{\vortexstrength^n s}{\energy^n}, \cdot\Bigr)
 &\weakto 
 \delta_{q (s)}
 &
 &\text{ and }& 
 \frac{\abs{\velocity^n}^2}{\energy^n} \Bigl(\frac{\vortexstrength^n s}{\energy^n}, \cdot\Bigr)
 &\weakto 
 \delta_{q (s)}
\end{align*} 
narrowly as measures, 
where the function \(q \in C^1 (\reals, D)\) is the unique solution of the Cauchy problem
\begin{equation}
\label{eq_eebahjahb0meezo8heeGheph}
  \left\{
  \begin{aligned}
    q'(s) &= - (\flipgradient\depth^{-1}) \bigl(q(s)\bigr) && \text{\(s \in \reals\)},\\
    q(0)&=q_0 .
  \end{aligned}
  \right.
\end{equation}
\end{theorem}

The assumptions of \cref{truncatedEvolutionLaw} imply in particular that \(\energy^n /\vortexstrength^n \to +\infty\) as \(n \to \infty\).

Here above in \cref{truncatedEvolutionLaw}, the function $\totalvorticity: \domain \to \reals$ stands for the initial
total vorticity
defined for each \(t \in \reals\) by 
  \begin{equation}
  \label{eq_def_totalvorticity}
      \totalvorticity (t) 
    \defeq 
      \int_{\domain} 
        \vortex (t)
        \,
        b .
  \end{equation}
The narrow convergence means explicitly that we assume that for every test function \(\varphi \in C (\Bar{\domain})\)
\[
  \lim_{n \to \infty} \frac{1}{\vortexstrength^n} \int_{\domain} \vortex^n \varphi 
  = \varphi (q_0);
\]
and that we conclude that for every test function \(\varphi \in C (\Bar{\domain})\) and every \(s \in \reals\)
\[
  \lim_{n \to \infty} \frac{1}{\vortexstrength^n} \int_{\domain} \vortex^n \left(\frac{\vortexstrength^n s}{\energy^n} \right) \varphi 
  = \varphi (q (s)).
\]
The narrow convergence of the energy density is similar to the result obtained for two-dimensional incompressible Euler flow by Dávila, del Pino, Musso and Wei \cite{Davila_delPino_Musso_Wei}.

Examples of solutions satisfying the assumptions of \cref{truncatedEvolutionLaw} are 
given by rescaling an initial boundary data since the lake equations \eqref{eqLake} with smooth initial data admit classical solutions. Given a non-negative  function \(f \in C^\infty_c (\plane)\) such that \(\int_{\plane} f = 1\), a sequence of positive numbers \((\varepsilon^n)_{n \in \integers}\) converging to \(0\), a point \(q_0 \in \domain\) and a sequence \((\vortexstrength^n)_{n \in \integers}\) such that for every \(n \in \integers\) \(B (q_0, \varepsilon^n) \subset D\), we define a vorticity \(\vortex^n : \omega \to \reals\),
for each \(n \in \integers\) and \(x \in \domain\)
\begin{equation}
 \vortex^n (0, x) \defeq \frac{\vortexstrength^n}{(\epsilon^n)^2} f \Bigl(\frac{x - q_0}{\varepsilon^n}\Bigr),
\end{equation}
and it can be computed that as \(n \to \infty\)
\begin{align*}
 \energy^n &= b (q_0)\frac{(\vortexstrength^n)^2}{4 \pi} 
 \ln \frac{1}{\varepsilon^n} + O(1)\, 
 &
 \totalvorticity^n(0) &= b(q_0)\,\vortexstrength^n \,\bigl(1  + O (\varepsilon^n)\bigr),
\end{align*}
and thus, by \cref{truncatedEvolutionLaw}, 
\begin{align*}
    \frac{1}{\vortexstrength^n}
    \vortex^n 
    \left(\tfrac{4 \pi s}{\vortexstrength^n b (q_0) \ln \frac{1}{\varepsilon^n}}, \cdot\right)
  &\weakto 
  \delta_{q (s)}
  &\text{ and }&&
  &\text{ and }&
  \frac{\abs{\velocity^n}^2}{b (q_0)(\vortexstrength^n)^2
 \ln \frac{1}{\varepsilon^n}} \left(\tfrac{4 \pi s}{\vortexstrength^n b (q_0) \ln \frac{1}{\varepsilon^n}}, \cdot\right) 
 &\weakto 
 \frac{\delta_{q (s)}}{4 \pi}
\end{align*}
narrowly as measures on \(\Bar{\domain}\), where the motion of \(q\) is governed by \eqref{eq_eebahjahb0meezo8heeGheph}.
If we set \(q_n (t) = q (\tfrac{\vortexstrength^n b (q_0) \ln \frac{1}{\varepsilon^n}}{4 \pi} t)\) and observe that \(b \circ q_n\) is constant,
then \(q_n\) satisfies the equation,
\[
  \left\{
  \begin{aligned}
    q_n'(t) &= \frac{\vortexstrength^n}{4 \pi} \ln \frac{1}{\varepsilon^n}\flipgradient \ln \depth (q_n (t)) && \text{\(t \in \reals\)},\\
    q(0)&=q_0,
  \end{aligned}
  \right.
\]
that is, \(q_n\) is governed by Richardson's law \eqref{eq_Richardson}.

The assumption that the depth \(b\) is constant on each component of the boundary implies that the solution \(q\) of the Cauchy problem obtained in the conclusion of \cref{truncatedEvolutionLaw} remains inside the domain \(D\) and is thus global; the assumption plays an important role in our method, but we do not see any reason for which it should be necessary for the convergence to hold on a time interval in which there is no collision with the boundary.

The assumption that the domain \(\domain\) is simply-connected in \cref{truncatedEvolutionLaw} yields a slightly simpler statement; it will be removed in the sequel under an additional condition that the circulations are controlled by the vortex circulation (see \cref{EvolutionLaw} below). 
Similarly, our proof of \cref{truncatedEvolutionLaw} also covers weak solutions of the lake equations in the vortex formulation.

When the depth \(b\) is a constant function, \cref{truncatedEvolutionLaw} implies that
the vortex is stationary at the time scale \(\vortexstrength^n (0) / \energy^n (0)\); this does contradict the classical planar vortex motion which occurs at a time scale of \(1/\vortexstrength^n (0)\), which is much larger in the regime \(\energy^n (0)/\vortexstrength^n (0)^2 \to +\infty\).

The description of the motion of vortices in \cref{truncatedEvolutionLaw} can be formally written as 
\begin{equation}
\label{eq_Richardson_Energy_Circulation}
    \dot{q} (t)
  \simeq
    -
    \frac{E}{\Gamma} 
    \Bigl(\flipgradient \frac{1}{b}\Bigr) (q (t)).
\end{equation}
An advantage of the formulation \eqref{eq_Richardson_Energy_Circulation} is that the typical radius $\varepsilon$ of the vortex, which is not necessarily preserved or even well-defined a priori along the flow, is replaced by a conserved quantity.

\medbreak

A first step in the proof of \cref{truncatedEvolutionLaw}, is to prove that the vorticity of the solution \(\mathbf{u}_n (t)\) at any time \(t \in \reals\) concentrates as \(n\to+\infty\).
In contrast to other works for the planar Euler equations \cite{Marchioro_Pulvirenti_1984} or cylindrically symmetric Euler equations in the space~\citelist{\cite{Benedetto_Gaglioti_Marchioro_2000}\cite{Marchioro_Pulvirenti_1984}}
in which the geometry of the vorticity region is constrained through its diameter or area, 
we rely on a typical length scale
  \begin{equation} 
  \label{eq_tee1uophouXuPh1EeGhoo9Im}
      \spacetypscale[n] (t) 
    \defeq 
      \exp
        \Big( 
          -\frac
            {4\pi\energy^n}{
              \vortexstrength^n
              \,
              \totalvorticity^n (t)} 
          \Big) ,
  \end{equation}
which is defined in terms of integral quantities related to the flow: the energy \(\energy^n (t)\) defined in \eqref{eq_def_energy} and the circulation \(\vortexstrength^n (t)\) defined in \eqref{eqVortexCirculation}, which are both conserved, and the \emph{total vorticity} \(\totalvorticity^n (t)\) defined in \eqref{eq_def_totalvorticity}
which satisfies 
\begin{equation}
\label{eq_aebeid4Ohdo7Pi0oht9ooJae}
 (\inf_\domain b) \, \Gamma  \le \totalvorticity (t) 
 \le (\sup_{\domain} b) \Gamma.
\end{equation}
The estimate \eqref{eq_aebeid4Ohdo7Pi0oht9ooJae} implies in particular that \(\rho_n (t) \to 0\) uniformly as \(n \to +\infty\). 

In order to show that the vorticity effectively concentrates on balls of radius of the order \(\rho_n (t)\) defined in \eqref{eq_tee1uophouXuPh1EeGhoo9Im}, we rely on the assumption \eqref{eq_ass_f} of \cref{truncatedEvolutionLaw} and on the fact that although the total vorticity \(\totalvorticity_n (t)\) is not conserved, it satisfies an estimate of the form 
\begin{equation}
\label{eq_pho0vion9eiQu2eeNg6yaire}
 \abs{\totalvorticity^n (t) - \totalvorticity^n (0)}
 \le C (\vortexstrength^n)^2 \abs{t}
 = C \abs{\vortexstrength^n}
 \frac{(\vortexstrength^n)^2}{\energy^n} \frac{\energy^n t}{\abs{\vortexstrength^n}}.
 \end{equation}
The proof of \eqref{eq_pho0vion9eiQu2eeNg6yaire} relies on the constancy of the depth \(b\) on connected components of the boundary \(\partial \domain\) (see \cref{conservationDepth}). 
Since \(\energy^n /(\vortexstrength^n)^2 \to \infty\), the estimate \eqref{eq_pho0vion9eiQu2eeNg6yaire} is stronger then the estimate \eqref{eq_aebeid4Ohdo7Pi0oht9ooJae} at time scale \(\vortexstrength^n/\energy^n \).

Our strategy to obtain the equation of motion of the vortex, is to study the \emph{center of vorticity}
\begin{equation}
\label{eq_uo8sheis2iiqu4Phoh3ailai}
    q_n (t) 
  \defeq  
    \frac
      {1}
      {\vortexstrength^n}
    \int_{\domain}
      x 
      \,
      \vortex_n (t, x)
      \dif x
      .
\end{equation}
A formal derivation argument on \eqref{eq_uo8sheis2iiqu4Phoh3ailai} gives the formula
\begin{equation*}
    \dot{q}_n (t)
  \simeq
    \frac{1}{\vortexstrength^n}
    \int_{\domain} 
      \frac{\nabla^\perp b}{b^2} 
      \,
      \psi_n (t) 
      \,
      \vortex_n (t);
\end{equation*}
a suitable study of the asymptotics of the vorticity \(\vortex_n (t)\) and of the stream function \(\psi_n (t)\) shows that the right-hand side behaves asymptotically as \(-(\nabla^\perp b^{-1})  \energy^n(t)/\vortexstrength^n (t)\).
Unfortunately our derivation formula for \(q_n\) would require the identity to be constant on the boundary \(\partial \domain\); we bypass this technical obstacle by considering a modified version of the center of vorticity which is close to the center of vorticity thanks to concentration estimates and some repulsion properties of the boundary.

\medskip

The sequel of the present work is organized as follows. 
In \cref{section_Lake_Model}, we precise the notion of weak solution of the lake equations in the vorticity formulation that we are using in the present work and we explain how the velocity can be reconstructed from the vorticity and the circulation around the boundary components and why the circulation \(\Gamma\) and energy \(E\) are preserved for weak solutions.
In \cref{section_expansion}, we expand the velocity construction formula in terms of the depth \(b\) and the Green function \(G_D\) for the classical Dirichlet problem on \(D\) at a level of precision required by the proof of our main result.
These asymptotics are used in \cref{section_concentration} to obtain various concentration estimates on the vorticity.
In \cref{section_Asymptotic}, we prove our main asymptotic result, after having obtained an asymptotic representation of derivatives of quantities and of the total vorticity \(\totalvorticity\).
\Cref{truncatedEvolutionLaw}, as a first result on the asymptotic behaviour of vortices for the lake equations, raises several open problems for future research that are presented in \cref{section_Problems}.

In a first appendix, we state some variants of classical results for transport equations \cite{DiPerna_Lions_1989} for velocities preserving the density \(b\).
The second appendix is devoted to a classical derivation of regularity results for the lake equations \eqref{eqLake}; this implies in particular that the classical solutions of the lake equations \eqref{eqLake} appearing in \cref{truncatedEvolutionLaw} exist for any smooth initial data.

\section{The lake model}
\label{section_Lake_Model}

\subsection{Weak vortex formulation of the lake equation}

A lake is represented by its projection on a bounded domain \(\domain\subseteq\plane\) of the horizontal plane
endowed and a depth function \(\depth : \domain \to (0, + \infty)\).
We assume that the domain \(\domain\) can be written as 
  \begin{equation*} 
      \domain 
    = 
      \domain_0 
      \setminus \bigcup_{i=1}^\nbislands\island_i ,
  \end{equation*}
where the set \(\domain_0 \subset \plane\) is simply-connected and its boundary is of class \(C^2\) and the islands \(\island_1, \dotsc, \island_\nbislands \subseteq \domain_0\) are disjoint simply-connected compact sets whose boundary is of class \(C^2\). We assume that \(b \in C^2 (\closure{\domain}, (0, + \infty))\). In particular, the depth \(b\) remains bounded away from \(0\) on the domain \(\domain\).

A weak solution of the vorticity formulation of the lake equations will satisfy weakly the following system
\[
\left\{
\begin{aligned}
  \divergence (b \,\velocity) & = 0 & & \text{in \([0, + \infty) \times \domain\)},\\
      \velocity \cdot \normal & = 0 & & \text{on \([0, +\infty) \times \partial \domain\)},\\
              \partial_t\vortex 
          + 
            \divergence
              \big( 
                \velocity
                \, 
                \vortex 
              \big) 
        &= 
          0 
      & 
      &\text{in \([0, +\infty) \times \domain\)},\\
  \curl \velocity & = \vortex & & \text{in \([0, +\infty) \times \domain\)},\\
    \vortex (0, \cdot) & = \vortex_0 & & \text{on \(\domain\)}.
\end{aligned}
\right. 
\]
More precisely, it will fulfill the following definition (see \citelist{\cite{Lacave_Pausader_Nguyen_2014}*{Definition 1.2}\cite{Huang_2013}*{Definition 2.2}}):

\begin{definition}
\label{def_weak_solution_vortex}
Given an initial pair \((\vortex_0, \velocity_0)$ with
$\vortex_0\in L^\infty (\domain, \reals)$ and $\velocity_0\in L^\infty (\domain, \reals^2)\) that satisfies weakly
\[
\left\{
\begin{aligned}
  \divergence (b \, \velocity_0) & = 0 & & \text{in \(\domain\)},\\
  \velocity_0 \cdot \normal & = 0 & & \text{on \(\partial \domain\)},\\
  \curl \velocity_0 & = \vortex_0 & & \text{in \(\domain\)},
\end{aligned}
\right.
\]
a pair \((\vortex, \velocity) \in L^\infty ([0, +\infty) \times D, \reals) \times L^\infty ([0, +\infty), L^2(\domain, \reals^2))\) is \emph{a weak solution of the lake equations in the vorticity formulation} with initial condition \((\vortex_0, \velocity_0)\) whenever 
\begin{enumerate}[(i)]
 \item \label{item_continuity} for every test function \(\varphi \in C^1_c ([0, +\infty) \times  \Bar{\domain})\), one has 
 \[
  \int_0^{+\infty} \int_{\domain} b \,\velocity \cdot \nabla \varphi = 0,
 \]
 \item \label{item_vorticity} for every test function \(\varphi \in C^1_c ([0, +\infty) \times \bar{\domain})\) such that for every \(t \in [0, +\infty)\),
 \(\varphi \vert_{\{t\} \times \partial\domain_0} = 0\) and for every \(i \in \{1, \dotsc, \nbislands\}\), \(\varphi \vert_{\{t\} \times \partial I_i}\) is constant, one has 
\[
    \int_0^{+\infty} 
      \int_{\domain} \bigl(\velocity \cdot \nabla^\perp \varphi -\vortex \,\varphi\bigr) 
  = 
    \int_0^{+\infty} 
      \int_{\domain} \bigl(\velocity_0 \cdot \nabla^\perp \varphi -\vortex_0 \,\varphi\bigr),
\]
\item \label{def_wk_sol_transport}
for every test function \(\varphi \in C^1_c ([0, +\infty) \times \domain)\), one has 
\[
 \int_{\domain}
      \vortex_0\,
      \varphi (0, \cdot)
      +
  \int_0^{+\infty} 
   \int_{\domain}
      \vortex \,\bigl(\partial_t \varphi
  +
      \velocity \cdot \nabla \varphi\bigr)
    = 0.
\]
\end{enumerate}

A pair \((\vortex, \velocity) \in L^\infty (\reals \times D, \reals) \times L^\infty (\reals, L^2(\domain, \reals^2))\) is a weak solution of the vorticity formulation of the lake equations with initial condition \((\vortex_0, \velocity_0)\) whenever the functions \(t \in [0, +\infty) \mapsto (\vortex (t), \velocity (t))\) and \(t \in [0, +\infty) \mapsto (-\vortex (-t), -\velocity (-t))\) are both weak solution to the vorticity formulation with initial condition \((\vortex_0, \velocity_0)\).
\end{definition}

Here and in the sequel, \(\tangent\) denotes the unit tangent vector to the boundary \(\partial \domain\) chosen so that \(\det (\normal, \tangent) = 1\).
The set \(C^1_c ([0, +\infty) \times \Bar{\domain})\) is the set of maps \(\varphi \in C^1([0, +\infty) \times \Bar{\domain})\) such that there exists \(T > 0\) such that \(\varphi = 0\) in \((T, +\infty) \times \domain\); \(C^1_c ([0, +\infty) \times \domain)\) is the set of maps \(\varphi \in C^1([0, +\infty) \times \Bar{\domain})\) such that there exists \(T > 0\) and a compact set \(K \subset \domain \) such that \(\varphi = 0\) in \(([0, +\infty) \times \domain) \setminus ([0, T] \times K)\).

Compared to \cite{Lacave_Pausader_Nguyen_2014}*{Definition 1.2}, \cref{def_weak_solution_vortex} considers fewer test functions in \eqref{def_wk_sol_transport} --- this will not matter eventually (see \cref{proposition_test_functions} below) --- and
incorporates the conservation of circulation around the components of the boundary.
Indeed, it follows from \eqref{item_vorticity} in \cref{def_weak_solution_vortex} that the circulation \(\Gamma_i (t)\) along \(\partial \island_i\) at time \(t\) can be defined by 
\[
    \Gamma_i (t) 
  \defeq 
    \int_{\domain} 
      \bigl(\velocity (t) \cdot \nabla^\perp \varphi -\vortex (t) \,\varphi\bigr),
\]
for any function \(\varphi \in C^1 (\Bar{\domain})\) such that \(\varphi = 1\) on \(\partial \island_i\) and for each \(j \in \{1, \dotsc, \nbislands\} \setminus \{i\}\), \(\varphi = 0\) on \(\partial \island_j\) (see also \cite{Lacave_Pausader_Nguyen_2014}*{(2.12)}).
In view of \eqref{item_vorticity} in \cref{def_weak_solution_vortex}, we have \(\Gamma_i (t) = \Gamma_i (0)\) for almost every \(t \in \reals\).

The surface height \(h\) does not appear in the weak formulation of \cref{def_weak_solution_vortex}, in accordance with the fact that \(\nabla h\) can be recovered in \eqref{eqLake} from \(\velocity\).

The lake equations have at least one global weak solution in the vorticity formulation \citelist{\cite{Lacave_Pausader_Nguyen_2014}*{Lemma 2.11 \& Theorem 1.3}\cite{Levermore_Oliver_Titi_1996}*{Theorem 1}\cite{Bresch_Metivier_2006}*{Theorem 2.2 ii)}}; this solution is unique \citelist{\cite{Bresch_Metivier_2006}*{Theorem 2.2 iii)}\cite{Lacave_Pausader_Nguyen_2014}*{\S 2.3}\cite{Munteanu_2012}*{Theorem 1.1}}.
If moreover \(\vortex_0 \in C^{k, \alpha} (\Bar{\domain})\), one has \(\vortex\in C^{k, \alpha} ([-T, T] \times \domain)\), \(\velocity \in C^{k, \alpha} ([-T, T] \times \domain, \reals^2)\) and \(\velocity \in L^\infty ([-T, T], C^{k + 1, \alpha} (\Bar{\domain}, \reals^2))\) (\cite{Huang_2013}*{Theorem 4.1} and \cref{proposition_classical} below).

An alternative to \cref{def_weak_solution_vortex} is the notion of weak solution for the velocity formulation~\cite{Lacave_Pausader_Nguyen_2014}*{Proposition~2.13}, based on \eqref{eqLake}.
Under regularity assumptions on the domain \(\domain\), the depth function \(b\) and on the initial data \((\vortex_0, \velocity_0)\), both notions are equivalent~\cite{Lacave_Pausader_Nguyen_2014}*{Proposition~A.1}.

\subsection{Velocity reconstruction}
Given a vorticity \(\vortex \in L^1 (\domain)\) 
and 
circulations \(\Gamma_1, \dotsc, \Gamma_\nbislands\), the \emph{velocity reconstruction problem} consists in finding a vector field \(\velocity : D \to \plane\) such that 
\begin{equation}
\label{eq_velocity_reconstruction_problem}
  \left\{
    \begin{aligned}
      \divergence\big(\depth\, \velocity \big) &= 0 & &\text{in}\  \domain,\\      \scalarproduct{\velocity}{\normal}{\plane} &= 0 & &\text{on}\ \boundary\domain,\\
      \curl \velocity  &= \vortex & &\text{in}\  \domain,\\
      \displaystyle \oint_{\partial I_i}\frac{\velocity \cdot \tangent}{\depth} &= \Gamma_i & &
      \text{for \(i \in \{1, \dotsc, \nbislands\}\)},
    \end{aligned}
    \right.
\end{equation}
weakly. 
That is, for every test function \(\varphi \in C^1 (\Bar{\domain})\),
\[
   \int_{\domain} b \,\velocity \cdot \nabla \varphi = 0,
\]
and for every \(\varphi \in C^1 (\Bar{\domain})\) such that \(\varphi \vert_{\partial \domain_0} = 0\) and \(\varphi \vert_{\partial \island_i} = \lambda_i \in \reals\),
\[
      \int_{\domain} \bigl(\velocity \cdot \nabla^\perp \varphi -\vortex \,\varphi\bigr) 
  = \sum_{i = 1}^\nbislands \Gamma_i \,\lambda_i.
\]
The system \eqref{eq_velocity_reconstruction_problem} corresponds thus for a fixed time \(t\) to the continuity equation \eqref{item_continuity} and the definition of vorticity \eqref{item_vorticity} in \cref{def_weak_solution_vortex}.

In view of the divergence-free condition \(\divergence (\depth\,\velocity) = 0\) in \(\domain\)
and of the boundary condition \(\velocity \cdot \normal = 0\) on \(\partial D\), the solution can be written as 
\(\velocity = (\flipgradient \psi)/b\), where \(\psi : \domain \to \reals\) is a scalar \emph{stream function} of the velocity field \(\velocity\) and the function \(\psi\) satisfies the elliptic problem
\begin{equation}
\label{eq_stream_function_problem}
  \left\{
    \begin{aligned}
       - \divergence\big(\depth^{-1} \nabla \psi \big) &= \vortex & &\text{in}\  \domain,\\
      \psi &= 0 & &\text{on}\ \boundary\domain_0,\\
      \psi &\text{ is constant} & & \text{on \(\partial \island_i\) for each \(i \in \{1, \dotsc, \nbislands\}\)},\\
      \int_{\partial \island_i} b^{-1} \frac{\partial \psi}{\partial \normal} &= \Gamma_i
& & \text{for each \(i \in \{1, \dotsc, \nbislands\}\)}. 
    \end{aligned}
    \right.
\end{equation}
Since the ansatz \(\velocity = (\flipgradient \psi)/b\) only defines the stream function \(\psi/b\) up to an additive constant, the boundary condition on \(\partial D_0\) fixes the choice of a particular stream function.

The problem \eqref{eq_stream_function_problem} can be handled by first solving the corresponding classical Dirichlet problem:
\begin{equation}
\label{eq_stream_function_problem_Dirichlet}
  \left\{
    \begin{aligned}
       - \divergence\big(\depth^{-1} \nabla \psi \big) &= \vortex & &\text{in}\  \domain,\\
      \psi &= 0 & &\text{on}\ \boundary\domain. 
    \end{aligned}
    \right.
\end{equation}
Since the function \(\depth\) is smooth and bounded from above and from below on \(\domain\), one has the classical result:

\begin{proposition}
\label{proposition_stream_function_Dirichlet}
For every \(p \in (1, +\infty)\), there exists a linear continuous
operator \(\mathcal{G}_b : L^p (\domain) \to W^{2, p} (\domain, \reals^2)\) such that for
every \(\vortex \in L^p (\domain)\), the function \(\mathcal{G}_b[\vortex] \) is a weak solution of the problem \eqref{eq_stream_function_problem_Dirichlet}.
\end{proposition}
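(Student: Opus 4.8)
The plan is to treat \eqref{eq_stream_function_problem_Dirichlet} as the homogeneous Dirichlet problem for the uniformly elliptic divergence-form operator \(L\staticflow \defeq -\divergence(\depth^{-1}\gradient\staticflow)\). Since \(\depth\in C^2(\closure{\domain},(0,+\infty))\) is bounded away from \(0\) and from \(+\infty\) on the bounded set \(\domain\), the coefficient matrix \(\depth^{-1}\Id\) is uniformly elliptic and its entries belong to \(C^1(\closure{\domain})\); moreover \(\boundary\domain\) is of class \(C^2\). These are exactly the hypotheses under which the classical \(L^p\) theory of second-order elliptic equations applies.

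First I would establish existence and uniqueness of a weak solution. The bilinear form \(a(\staticflow,\varphi)\defeq\int_\domain \depth^{-1}\,\gradient\staticflow\cdot\gradient\varphi\) is continuous and, because \(\depth^{-1}\ge (\sup_\domain \depth)^{-1}>0\), coercive on \(H^1_0(\domain)\). In dimension two the embedding \(H^1_0(\domain)\hookrightarrow L^{p'}(\domain)\) holds for every finite \(p'\), so that \(L^p(\domain)\hookrightarrow H^{-1}(\domain)\) for every \(p\in(1,+\infty)\); hence for each \(\omega\in L^p(\domain)\) the Lax--Milgram theorem yields a unique \(\staticflow\in H^1_0(\domain)\) satisfying \(a(\staticflow,\varphi)=\int_\domain\omega\,\varphi\) for all \(\varphi\in H^1_0(\domain)\), which is precisely the weak formulation of \eqref{eq_stream_function_problem_Dirichlet}.

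The core of the proof is then the \(W^{2,p}\) regularity together with the a priori estimate \(\norm{\staticflow}{W^{2,p}(\domain)}\le C\,\norm{\omega}{L^p(\domain)}\). Expanding the divergence, the equation reads \(-\Delta\staticflow=\depth\,\omega-\gradient(\ln\depth)\cdot\gradient\staticflow\) almost everywhere, with leading coefficient \(1\), a lower-order coefficient \(\gradient\ln\depth\in C^1(\closure{\domain})\subset L^\infty\), and a right-hand side that lies in \(L^{\min(p,2)}(\domain)\) to begin with. Invoking the Calderón--Zygmund interior and boundary estimates for the Laplacian (Agmon--Douglis--Nirenberg, or Gilbarg--Trudinger) on the \(C^2\) domain \(\domain\), together with the Sobolev embeddings, one bootstraps the integrability of \(\gradient\staticflow\) until the right-hand side lies in \(L^p(\domain)\), obtaining \(\staticflow\in W^{2,p}(\domain)\cap W^{1,p}_0(\domain)\) with the stated bound. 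Setting \(\mathcal{G}_b[\omega]\defeq\staticflow\) then defines a \emph{linear} operator — linearity is inherited from the uniqueness of the weak solution — and the a priori estimate is exactly its continuity from \(L^p(\domain)\) to \(W^{2,p}(\domain)\). Finally, a \(W^{2,p}\) function solving the equation almost everywhere becomes, after one integration by parts, a weak solution in the required sense, with the homogeneous boundary condition encoded in \(\staticflow\in W^{1,p}_0(\domain)\).

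I expect the main obstacle to be the global, up-to-the-boundary \(W^{2,p}\) estimate for \(p\neq 2\): the interior Calderón--Zygmund estimate is classical, but controlling the second derivatives near \(\boundary\domain\) requires flattening the \(C^2\) boundary and exploiting the continuity (here \(C^1\)-regularity) of the leading coefficient \(\depth^{-1}\), which is precisely where the regularity hypotheses on \(\domain\) and \(\depth\) are consumed. Everything else — coercivity, the integrability bootstrap, linearity, and the identification of strong and weak solutions — is routine.
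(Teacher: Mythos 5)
Your proposal is correct and follows essentially the same route as the paper, whose proof consists of a single citation to \cite{Gilbarg_Trudinger_1998}*{theorem 9.15}: you simply unfold that citation, rewriting the divergence-form operator as a uniformly elliptic operator with $C^1$ leading coefficient and bounded lower-order terms on a $C^2$ domain, obtaining existence via Lax--Milgram and the $W^{2,p}$ bound via the Calder\'on--Zygmund theory. The details you supply (the embedding $L^p(\domain)\hookrightarrow H^{-1}(\domain)$ in two dimensions, the one-step bootstrap of $\nabla\staticflow$ through $W^{1,2}\hookrightarrow L^q$ for all finite $q$) are all sound.
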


\begin{proof}
See for example \cite{Gilbarg_Trudinger_1998}*{theorem 9.15}.
\end{proof}

We now describe the solution to \eqref{eq_stream_function_problem} in terms of \eqref{eq_stream_function_problem_Dirichlet} (see also \cite{Levermore_Oliver_Titi_1996}*{\S 3}; in the case of the planar Euler equations where \(b\) is constant, see
\citelist{%
  \cite{Lin_1941}
  \cite{Koebe_1918}*{\S 6}
  \cite{VanSchaftingen_Smets_2010}*{(45)}
}%
).

\begin{proposition}
\label{proposition_island_stream}
For every \(i \in \{1, \dotsc, \nbislands\}\), there exists a unique function \(\psi_i \in C^2 (\Bar{\domain})\) that solves  \eqref{eq_stream_function_problem} with \(\vortex = 0\), \(\Gamma_i = 1\) and \(\Gamma_j = 0\) when \(i \ne j\).
\end{proposition}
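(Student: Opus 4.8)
The plan is to reduce \eqref{eq_stream_function_problem} with \(\omega = 0\) to a finite-dimensional linear system governed by a symmetric positive-definite matrix. First I would introduce, for each \(j \in \{1, \dotsc, \nbislands\}\), the weighted harmonic measure \(w_j\) of the island \(\island_j\): the solution of \(-\divergence(\depth^{-1}\gradient w_j) = 0\) in \(\domain\) with \(w_j = 1\) on \(\partial\island_j\) and \(w_j = 0\) on every other boundary component. To construct it, I would extend these boundary values to a function \(G_j \in C^2(\closure{\domain})\) equal to \(1\) near \(\partial\island_j\) and \(0\) near the remaining components, and solve for the correction \(v_j \defeq w_j - G_j\), which satisfies the homogeneous Dirichlet problem \eqref{eq_stream_function_problem_Dirichlet} with datum \(\divergence(\depth^{-1}\gradient G_j) \in L^p(\domain)\). \Cref{proposition_stream_function_Dirichlet} then gives \(v_j = \mathcal{G}_b[\divergence(\depth^{-1}\gradient G_j)] \in W^{2, p}(\domain)\) for every \(p \in (1, +\infty)\), so that \(w_j \in C^2(\domain) \cap C^1(\closure{\domain})\) by interior Schauder estimates (the coefficients being \(C^1\)) and Sobolev embedding; uniqueness of \(w_j\) follows from the maximum principle.

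Next I would note that any \(\psi\) satisfying the first three lines of \eqref{eq_stream_function_problem} with \(\omega = 0\) --- weighted harmonic, vanishing on \(\partial\domain_0\), and equal to a constant \(\lambda_j\) on each \(\partial\island_j\) --- must coincide with \(\sum_{j = 1}^{\nbislands}\lambda_j w_j\), since both solve the same Dirichlet problem \eqref{eq_stream_function_problem_Dirichlet}. The flux conditions then become the linear system \(\sum_{j} M_{kj}\lambda_j = \Gamma_k\), where \(M_{kj} \defeq \int_{\partial\island_k}\depth^{-1}\frac{\partial w_j}{\partial\normal}\). Existence and uniqueness of \(\psi_i\) for the data \(\Gamma_k = \delta_{ik}\) are thus equivalent to the invertibility of \(M\).

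The crux is therefore to prove that \(M\) is invertible, which I would do by showing it is symmetric and positive definite. Integrating by parts and using \(\divergence(\depth^{-1}\gradient w_j) = 0\), together with \(w_k = \delta_{kl}\) on \(\partial\island_l\) and \(w_k = 0\) on \(\partial\domain_0\), yields
\[
  M_{kj} = \int_{\domain}\depth^{-1}\gradient w_k\cdot\gradient w_j,
\]
which is manifestly symmetric. For positivity, set \(w_\lambda \defeq \sum_{j} \lambda_j w_j\) for \(\lambda \in \reals^{\nbislands}\); then \(\lambda^\dagger M\lambda = \int_{\domain}\depth^{-1}\abs{\gradient w_\lambda}^2 \ge 0\), with equality only if \(w_\lambda\) is constant. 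Since \(w_\lambda\) vanishes on \(\partial\domain_0\) it then vanishes identically, and its value \(\lambda_k\) on \(\partial\island_k\) forces \(\lambda = 0\). Hence \(M\) is positive definite, in particular invertible, and \(\lambda = M^{-1}e_i\) produces the unique \(\psi_i = w_\lambda \in C^2(\domain)\); uniqueness for arbitrary circulations follows in the same way, the homogeneous problem corresponding to \(M\lambda = 0\). The one point demanding care is the boundary regularity of the \(w_j\) needed both to define the flux integrals \(M_{kj}\) and to justify the integration by parts, which is supplied by the global \(W^{2,p}\) regularity in \cref{proposition_stream_function_Dirichlet}.
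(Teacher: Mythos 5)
Your proposal is correct and follows essentially the same route as the paper: the weighted harmonic measures \(w_j\) are exactly the functions \(\varphi_j\) of \eqref{eq_prolem_phi_i}, and after your integration by parts the flux matrix \(M\) coincides with the Dirichlet energy matrix \(D_{ij}\) of \eqref{eq_def_Dij}, whose positive-definiteness is the key point in both arguments. The only (welcome) additions are your explicit construction of the \(w_j\) by extending boundary data and correcting via \cref{proposition_stream_function_Dirichlet}, and your explicit treatment of uniqueness, which the paper leaves implicit.
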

\begin{proof}
For every \(i \in \{1, \dotsc, \nbislands\}\), let \(\varphi_i \in C^2 (\Bar{\domain})\) be a classical solution to the Dirichlet problem
\begin{equation}
\label{eq_prolem_phi_i}
  \left\{
    \begin{aligned}
       - \divergence\big(\depth^{-1} \nabla \varphi_i \big) &= 0 & &\text{in}\  \domain,\\
      \varphi_i &= \delta_{ij} & &\text{on \(\partial \island_j\) for each \(j \in \{1, \dotsc, \nbislands\}\)},
      \\
      \varphi_i &= 0 & &\text{on \(\partial D_0\)},
    \end{aligned}
    \right.
\end{equation}
where \(\delta_{ij}\) is the Kronecker delta, that is, \(\delta_{ij} = 1\) whenever \(i = j\) and \(\delta_{ij} = 0\) otherwise. 
Since the functions \(\varphi_1, \dotsc, \varphi_{\nbislands}\) are by construction linearly independent and since the domain \(\domain\) is connected, the matrix \((D_{ij})_{1 \le i, j \le \nbislands}\) defined by
\begin{equation}
  \label{eq_def_Dij}  
    D_{ij} 
  \defeq 
    \int_{\domain} 
      \frac
        {
            \nabla \varphi_i 
          \cdot 
            \nabla \varphi_j
        }
        {b}
\end{equation}
is positive-definite and thus invertible. Let \((D^{-1}_{ij})_{1 \le i, j \le \nbislands}\) denote the inverse of this matrix. 
For every \(i \in \{1, \dotsc, \nbislands\}\), we define the function \(\psi_i : \domain \to \reals\) by
\begin{equation*}
    \psi_i 
  \defeq 
    \sum_{j = 1}^{\nbislands} D_{ij}^{-1} \varphi_j .
\end{equation*}
The function \(\psi_i\) satisfies the equation \(-\divergence (b^{-1} \nabla \psi_i) = 0\), the boundary condition \(\psi_i = 0\) on \(\partial \domain_0\) and \(\psi_i\) is constant on each \(\partial \island_j\).
Finally, we compute, in view of the boundary conditions satisfied by \(\varphi_j\) and the definition of \(\psi_i\):
\begin{equation*}
  \int_{\partial \island_j} 
    \frac{1}{b} 
    \frac{\partial \psi_i}{\partial \normal}
  = 
    \sum_{\ell = 1}^{\nbislands}
      D_{i \ell}^{-1}
      \int_{\partial \domain} 
        \frac{\varphi_j}{b} \frac{\partial \varphi_\ell}{\partial \normal}
  =
    \sum_{\ell = 1}^{\nbislands}
      D_{i\ell}^{-1}
      \int_{\domain} 
        \frac{\nabla \varphi_j \cdot \nabla \varphi_\ell}{b}
  =
    \sum_{\ell = 1}^{\nbislands}
      D_{i\ell}^{-1} D_{\ell j}
  = 
    \delta_{ij}.
  \qedhere
\end{equation*}
\end{proof}

\begin{proposition}
\label{proposition_Green_stream_function}
For every \(p \in (1, +\infty)\), there exists a linear continuous
operator \(\mathcal{K}_b : L^p (\domain) \to  W^{2, p} (\domain)\) such that for
every \(\vortex \in L^p (\domain)\),
the function \(\mathcal{K}_b[\vortex]\) satisfies problem \eqref{eq_stream_function_problem} weakly with \(\Gamma_1 = \dotsb = \Gamma_{\nbislands} = 0\).
\\
Moreover, there exists a smooth function \(Q_b \in C^2 (\closure{\domain} \times \closure{\domain}, \reals^2)\) such that for each \(\vortex \in L^p (\domain)\), 
\begin{equation*}
    \mathcal{K}_b [\vortex]
  = 
      \mathcal{G}_b[\vortex] 
    + 
      \int_{\domain} 
        Q_b (\cdot, y) 
        \,
        \vortex (y) 
        \dif y .
\end{equation*}
\end{proposition}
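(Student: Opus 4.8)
The plan is to correct the Dirichlet solution $\mathcal{G}_b[\omega]$ by a finite-dimensional term assembled from the stream functions $\psi_1, \dotsc, \psi_{\nbislands}$ of \cref{proposition_island_stream}, so as to cancel the circulations around the islands without disturbing the interior equation or the boundary values on $\partial \domain_0$. Concretely I would seek the operator in the form
\[
  \mathcal{K}_b[\omega] \defeq \mathcal{G}_b[\omega] + \sum_{i = 1}^{\nbislands} \alpha_i (\omega)\, \psi_i ,
\]
with linear functionals $\alpha_1, \dotsc, \alpha_{\nbislands} : L^p (\domain) \to \reals$ still to be fixed. Because $\mathcal{G}_b[\omega]$ vanishes on all of $\partial \domain$ and each $\psi_i$ vanishes on $\partial \domain_0$, is constant on every $\partial \island_j$ and satisfies $-\divergence (b^{-1} \nabla \psi_i) = 0$, any such combination already solves $-\divergence (b^{-1} \nabla \mathcal{K}_b[\omega]) = \omega$ in $\domain$, vanishes on $\partial \domain_0$ and is constant on each $\partial \island_j$, whatever the $\alpha_i$. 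Only the circulation conditions of \eqref{eq_stream_function_problem} remain to be arranged.

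The heart of the matter is to select the $\alpha_i$ so that every circulation vanishes. Using the normalisation $\int_{\partial \island_j} b^{-1}\, \frac{\partial \psi_i}{\partial \normal} = \delta_{ij}$ from \cref{proposition_island_stream}, the circulation of $\mathcal{K}_b[\omega]$ around $\partial \island_i$ equals $c_i (\omega) + \alpha_i (\omega)$, where
\[
  c_i (\omega) \defeq \int_{\partial \island_i} \frac{1}{b}\,\frac{\partial \mathcal{G}_b[\omega]}{\partial \normal} ,
\]
so the only admissible choice is $\alpha_i (\omega) = - c_i (\omega)$. The key computation is then a reciprocity identity obtained by integrating by parts twice, testing the equation for $\mathcal{G}_b[\omega]$ against the auxiliary functions $\varphi_i \in C^2 (\closure{\domain})$ from the proof of \cref{proposition_island_stream} (which equal $\delta_{ij}$ on $\partial \island_j$ and vanish on $\partial \domain_0$): since $\mathcal{G}_b[\omega] \in W^{2, p} (\domain)$ vanishes on $\partial \domain$ while $\varphi_i$ satisfies $-\divergence (b^{-1} \nabla \varphi_i) = 0$, one finds
\[
  c_i (\omega) = - \int_{\domain} \varphi_i\, \omega .
\]
This identity is the linchpin, and I expect the careful handling of the orientation of $\normal$ on the island boundaries and of the resulting signs to be the only genuinely delicate point of the argument. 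It rewrites the a priori trace-defined functional $c_i$ as an integral against a fixed smooth function, so that $\alpha_i (\omega) = \int_{\domain} \varphi_i\, \omega$ is plainly linear and continuous on $L^p (\domain)$, as $\varphi_i \in L^{p'} (\domain)$.

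With these functionals, continuity of $\mathcal{K}_b$ from $L^p (\domain)$ into $W^{2, p} (\domain)$ follows from the continuity of $\mathcal{G}_b$ (\cref{proposition_stream_function_Dirichlet}) together with the fact that $\omega \mapsto \sum_i \bigl(\int_{\domain} \varphi_i\, \omega\bigr)\psi_i$ is a finite-rank operator whose range lies in $C^2 (\closure{\domain}) \subset W^{2, p} (\domain)$. Substituting the explicit functionals into the definition gives
\[
  \mathcal{K}_b[\omega] (x) = \mathcal{G}_b[\omega] (x) + \int_{\domain} \Bigl(\sum_{i = 1}^{\nbislands} \psi_i (x)\, \varphi_i (y)\Bigr)\, \omega (y) \dif y ,
\]
so that the announced kernel is $Q_b (x, y) \defeq \sum_{i = 1}^{\nbislands} \psi_i (x)\, \varphi_i (y)$. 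Since each $\varphi_i \in C^2 (\closure{\domain})$ and hence each $\psi_i = \sum_j D_{ij}^{-1} \varphi_j \in C^2 (\closure{\domain})$, the kernel $Q_b$ belongs to $C^2 (\closure{\domain} \times \closure{\domain})$, which completes the argument.
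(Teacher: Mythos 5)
Your proposal is correct and follows essentially the same route as the paper: both correct \(\mathcal{G}_b[\omega]\) by a finite-rank term built from the functions \(\varphi_i\) and the matrix \(D^{-1}\), and both reduce the circulation computation to the reciprocity identity \(\int_{\partial\domain}\varphi_i\, b^{-1}\frac{\partial \mathcal{G}_b[\omega]}{\partial\normal} = -\int_{\domain}\varphi_i\,\omega\) obtained by integrating by parts twice against \(\varphi_i\). The only substantive difference is the sign of the kernel: you obtain \(Q_b(x,y)=+\sum_{i,j}\varphi_i(x)\,D^{-1}_{ij}\,\varphi_j(y)\), whereas the paper posits \(Q_b(x,y)=-\sum_{i,j}\varphi_i(x)\,D^{-1}_{ij}\,\varphi_j(y)\); your sign is the one consistent with the orientation convention used in the proof of \cref{proposition_island_stream} (where \(\int_{\partial\domain}b^{-1}\varphi_j\,\frac{\partial\varphi_\ell}{\partial\normal}=D_{j\ell}\)), while the paper's verification of the vanishing circulation applies Green's identity with opposite orientations to its two boundary terms. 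Since \(Q_b\) only enters the sequel through the boundedness and Lipschitz regularity of \(R_b\), this sign is immaterial for the rest of the paper, but your derivation --- fixing the coefficients \(\alpha_i(\omega)\) by cancelling the computed circulation defect of \(\mathcal{G}_b[\omega]\) rather than positing the kernel and verifying --- is the more robust way to get it right.
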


\begin{proof}%
[Proof of \cref{proposition_Green_stream_function}]
We define the function \(Q_b : \Bar{\domain} \times \Bar{\domain} \to \reals\) for every \(x, y \in \domain\) by 
\begin{equation*}
    Q_b (x, y) 
  \defeq  
    - \sum_{i, j = 1}^{\nbislands} 
        \varphi_i (x) 
        D^{-1}_{ij} 
        \varphi_j (y),
\end{equation*}
where the functions \(\varphi_i\) and the matrix \(D_{ij}^{-1}\) were defined in \eqref{eq_prolem_phi_i} and  \eqref{eq_def_Dij} in the proof of \cref{proposition_island_stream}, and for each \(\vortex \in L^p (\domain)\),
\begin{equation*}
    \mathcal{K}_b [\vortex] 
  \defeq 
      \mathcal{G}_b [\vortex]
    + 
      \int_{\domain} 
        Q_b (\cdot, y) 
        \,
        \vortex (y) 
        \dif y.
\end{equation*}
By linearity, we have that \(-\divergence (b^{-1} \nabla \mathcal{K}_b[\vortex])
=-\divergence (b^{-1} \nabla \mathcal{G}_b[\vortex]) = \vortex\), that \(\mathcal{K}_b[\vortex] = \mathcal{G}_b[\vortex] = 0\) on \(\partial \domain_0\) and that
\(\mathcal{K}_b[\vortex]\) is constant on each component of the boundary.
Finally, we have for each \(i \in \{1, \dotsc, \nbislands\}\),
\begin{equation*}
\begin{split}
    \int_{\partial I_i} 
      \frac{1}{b} 
      \frac{\partial \mathcal{K}_b[\vortex]}{\partial \normal}
  &= 
      \int_{\partial \domain} 
        \frac{1}{b} \frac{\partial \mathcal{G}_b[\vortex]}{\partial \normal} \varphi_{i} 
    - 
      \sum_{j, \ell = 1}^\nbislands
        D^{-1}_{j \ell} 
        \int_{\partial \domain} 
          \frac{1}{b}\,
          \varphi_i  
          \,
          \frac{\partial \varphi_j}{\partial \normal}
        \int_{\domain} 
          \varphi_\ell \,\vortex
\\
  &= 
      \int_{\domain} \vortex\, \varphi_i
    - 
      \sum_{j,\ell = 1}^\nbislands 
        D^{-1}_{j \ell} 
        \int_{\domain} \frac{\nabla \varphi_i \cdot \nabla \varphi_j}{b} \int_{D} \vortex \,\varphi_\ell
 = 
    0.\qedhere
\end{split}
\end{equation*}
\end{proof}

We deduce from \cref{proposition_island_stream,proposition_Green_stream_function}
that for every \(\vortex \in L^p (\domain)\) and every \(\Gamma_1, \dotsc, \Gamma_{\nbislands} \in \reals\), the solution \(\psi : D \to \reals\) to the problem \eqref{eq_stream_function_problem}
is given for each \(x \in \domain\) by 
\begin{equation}
\label{eq_stream_function_decomposition}
\begin{split}
    \psi (x)
  &=
    \mathcal{K}_b[\vortex] (x)
    +
      \sum_{i = 1}^{\nbislands}
        \Gamma_i
        \psi_i (x)
  \\
  &= 
      \mathcal{G}_b[\vortex] (x)
    +
      \int_{\domain} 
        Q_b (x, y) 
        \,
        \vortex (y)
        \dif y
    +
      \sum_{i = 1}^{\nbislands}
        \Gamma_i
        \,
        \psi_i (x)  .
\end{split}
\end{equation}
The associated velocity field \(\velocity: D \to \reals^2\) is then given for each \(x \in \domain\) by the relation (see also \citelist{\cite{Lacave_Pausader_Nguyen_2014}*{Lemma 2.6}\cite{Levermore_Oliver_Titi_1996}*{Lemma 5}})
\begin{equation}
\label{eq_velocity_reconstruction}
\begin{split}
    \velocity (x)
  &= 
    \frac{1}{\depth(x)}\biggl(  \flipgradient \mathcal{K}_b[\vortex] (x)
    +
      \sum_{i = 1}^{\nbislands}
        \Gamma_i
        \flipgradient \psi_i (x)\biggr)\\
  &=
    \frac{1}{\depth(x)}\biggl(  
      \flipgradient \mathcal{G}_b[\vortex] (x)
    +
      \int_{\domain} 
        \flipgradient Q_b (x, y) 
        \,
        \vortex (y)
        \dif y
    +
      \sum_{i = 1}^{\nbislands}
        \Gamma_i
        \flipgradient \psi_i (x)
        \biggr),
\end{split}
\end{equation}
where we adopt the convention that \(\nabla^\perp\) only acts on the first two-dimensional variable of the function \(Q_b\).

We conclude this section by showing how the kinetic energy defined by \eqref{eq_def_energy} at a fixed time can be computed in terms of the vorticity and the circulations.

\begin{proposition}
\label{proposition_Energy_Formula}
If \(\vortex \in L^p (\domain)\) for some \(p > 1\), then \(\velocity \in L^2 (\domain, \reals^2)\) and 
\[
     E 
  =
    \frac{1}{2} \int_{\domain} \abs{\velocity}^2 b
  =
     \frac{1}{2}
      \int_{\domain}
        \vortex 
        \,
        \mathcal{K}_b[\vortex]
    +
      \sum_{i = 1}^{\nbislands}
        \Gamma_i
        \int_{\domain}
           \vortex
           \, 
           \psi_i
    +
      \sum_{i, j = 1}^{\nbislands}
        \frac{\Gamma_i \, \Gamma_j}{2}
        \int_{\domain \times \domain}
          \frac
            {\nabla \psi_i \cdot \nabla \psi_j}
            {b}.
\]
\end{proposition}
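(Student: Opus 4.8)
The plan is to reduce the energy to a quadratic form in the stream function $\psi$ and then to evaluate that form by testing the elliptic problem \eqref{eq_stream_function_problem} against its own solution. First I would record that, since $\velocity = (\flipgradient\psi)/b$ and $\abs{\flipgradient\psi} = \abs{\nabla\psi}$, while $b$ is bounded away from $0$ and $+\infty$ on $\domain$, one has
\[
\energy = \frac{1}{2}\int_{\domain} \abs{\velocity}^2 b = \frac{1}{2}\int_{\domain} \frac{\abs{\nabla\psi}^2}{b}.
\]
The membership $\velocity\in L^2(\domain,\reals^2)$ follows from elliptic regularity: by \cref{proposition_Green_stream_function,proposition_island_stream} the stream function $\psi$ given by \eqref{eq_stream_function_decomposition} belongs to $W^{2,p}(\domain)$, so that $\nabla\psi \in W^{1,p}(\domain,\reals^2)$, which embeds into $L^2$ in dimension two for every $p > 1$; dividing by the bounded function $b$ keeps $\velocity$ in $L^2$.

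Next I would introduce the symmetric bilinear form $a(v,w) \defeq \int_{\domain} b^{-1}\,\nabla v\cdot\nabla w$, so that $\energy = \tfrac12\, a(\psi,\psi)$, and expand the decomposition $\psi = \mathcal{K}_b[\omega] + \sum_i \Gamma_i\psi_i$ from \eqref{eq_stream_function_decomposition}:
\[
a(\psi,\psi) = a\bigl(\mathcal{K}_b[\omega], \mathcal{K}_b[\omega]\bigr) + 2\sum_{i=1}^{\nbislands} \Gamma_i\, a\bigl(\mathcal{K}_b[\omega], \psi_i\bigr) + \sum_{i,j=1}^{\nbislands}\Gamma_i\,\Gamma_j\, a(\psi_i,\psi_j).
\]
The last sum is already the quadratic term of the claimed identity, so it remains to prove $a(\mathcal{K}_b[\omega],\mathcal{K}_b[\omega]) = \int_{\domain}\omega\,\mathcal{K}_b[\omega]$ and $a(\mathcal{K}_b[\omega],\psi_i) = \int_{\domain}\omega\,\psi_i$.

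Both identities are integrations by parts against the equation $-\divergence(b^{-1}\nabla\,\cdot) = \omega$ satisfied by $\mathcal{K}_b[\omega]$ in the strong ($L^p$) sense. For the cross term I would test this equation against $\psi_i$, which is a legitimate test function because $\psi_i\in C^2(\closure{\domain})$ vanishes on $\partial\domain_0$ and is constant on each $\partial\island_j$; the resulting boundary contribution is $\sum_j (\psi_i|_{\partial\island_j})\int_{\partial\island_j} b^{-1}\partial_\normal \mathcal{K}_b[\omega]$, which vanishes because $\mathcal{K}_b[\omega]$ has zero circulation around every island. For the diagonal term one integrates by parts using $\mathcal{K}_b[\omega]$ itself: the boundary integral $\int_{\partial\domain} b^{-1}\,\mathcal{K}_b[\omega]\,\partial_\normal \mathcal{K}_b[\omega]$ again vanishes, since $\mathcal{K}_b[\omega]=0$ on $\partial\domain_0$ while on each $\partial\island_i$ its constant boundary value factors out of the vanishing circulation integral. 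Collecting the three contributions and dividing by two yields exactly the stated formula.

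The \emph{main obstacle} is the rigorous justification of the diagonal integration by parts for merely $L^p$ vorticity, where the test function is the low-regularity $\mathcal{K}_b[\omega]$ itself: this function lies in $W^{2,p}(\domain)$ but need not be $C^1$ up to the boundary, so the Green identity and the normal trace $\partial_\normal \mathcal{K}_b[\omega]$ must be handled through $W^{2,p}$ trace theory, or, equivalently, by approximating $\omega$ in $L^p$ by smooth data and passing to the limit using the continuity of the operator $\mathcal{K}_b$ from \cref{proposition_Green_stream_function}. The cross term needs no such care, since there the test function $\psi_i$ is already smooth.
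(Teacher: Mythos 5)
Your proposal is correct and follows essentially the same route as the paper: the paper likewise writes $E=\frac12\int_{\domain}\abs{\nabla\mathcal{K}_b[\omega]+\sum_i\Gamma_i\nabla\psi_i}^2/b$, expands the square, and concludes "by integration by parts," exactly the bilinear-form expansion you carry out. Your version is in fact more careful than the paper's two-line argument, since you spell out the boundary terms (vanishing via the zero circulations of $\mathcal{K}_b[\omega]$ and the constancy of $\psi_i$ on each $\partial\island_j$) and the $W^{2,p}$ trace/approximation issue that the paper leaves implicit.
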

\begin{proof}
In view of the representation formula for the velocity field \eqref{eq_velocity_reconstruction}, we have, 
\begin{equation}
  \label{eq_ohHiis8aaYae8yul4}
  \begin{split}
    E 
 &= 
    \frac{1}{2}
    \int_{\domain} 
      \frac
        {\abs{
            \nabla \mathcal{K}_b[\vortex] 
          + 
            \textstyle \sum_{i = 1}^{\nbislands} \Gamma_i \nabla \psi_i}^2
        }
        {b}
  \\
    &=
      \frac{1}{2}
      \int_{\domain}
        \vortex 
        \,
        \mathcal{K}_b[\vortex]
    +
      \sum_{i = 1}^{\nbislands}
        \Gamma_i
        \int_{\domain}
           \vortex\, \psi_i
    +
      \sum_{i, j = 1}^{\nbislands}
        \frac{\Gamma_i\, \Gamma_j}{2}
        \int_{\domain \times \domain}
          \frac
            {\nabla \psi_i \cdot \nabla \psi_j}
            {b},
            \end{split}
\end{equation}
by integration by parts and by definition of \(\mathcal{K}_b [\vortex]\) in \cref{proposition_Green_stream_function}.
\end{proof}

\subsection{Additional regularity of weak solutions}

We apply the previous results to the regularity of stream functions of weak solutions (see \cite{Bresch_Metivier_2006}*{Theorem 1 i)}):

\begin{proposition}
\label{propositionRegularity}
If \((\vortex, \velocity) \in L^\infty (\reals \times \domain, \reals) \times L^\infty(\reals, L^2 (\domain, \reals^2))\) is a weak solution to the vortex formulation of the lake equations, then for every \(p \in [1, + \infty)\),
we have
\[\velocity \in L^\infty([0, + \infty), W^{1, p} (\domain))\]
and
\[\mathcal{K}_b [\vortex] \in W^{1, \infty} ([0, +\infty) \times \domain, \reals) .\]
\end{proposition}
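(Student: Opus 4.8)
The plan is to exploit the boundedness of the vorticity together with the elliptic regularity encoded in \cref{proposition_Green_stream_function} and the velocity reconstruction formula \eqref{eq_velocity_reconstruction}. The starting observation is that, since \(\domain\) is bounded and \(\omega \in L^\infty (\reals \times \domain)\), for every \(p \in [1, +\infty)\) one has \(\omega (t) \in L^p (\domain)\) for almost every \(t\), with the uniform bound \(\norm{\omega (t)}{L^p (\domain)} \le \abs{\domain}^{1/p}\, \norm{\omega}{L^\infty (\reals \times \domain)}\). All the regularity will be extracted from this single \(t\)-uniform integrability.

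First I would establish the spatial regularity at fixed time. Testing the time-integrated identities with functions of product form \(\phi (t)\,\varphi (x)\), conditions \eqref{item_continuity} and \eqref{item_vorticity} of \cref{def_weak_solution_vortex} say that, for almost every \(t\), the field \(\velocity (t)\) solves the velocity reconstruction problem \eqref{eq_velocity_reconstruction_problem} with data \(\omega (t)\) and the conserved circulations \(\Gamma_1, \dotsc, \Gamma_\nbislands\); by uniqueness of the reconstruction it coincides with the field given by \eqref{eq_velocity_reconstruction}, namely \(\velocity (t) = \flipgradient \mathcal{K}_b[\omega (t)] + \sum_{i = 1}^{\nbislands} \Gamma_i \flipgradient \psi_i\). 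Since \(\mathcal{K}_b : L^p (\domain) \to W^{2, p} (\domain)\) is bounded by \cref{proposition_Green_stream_function} and the functions \(\psi_i\) are fixed and of class \(C^2\), this yields \(\velocity (t) \in W^{1, p} (\domain)\) with a bound independent of \(t\), hence \(\velocity \in L^\infty ([0, +\infty), W^{1, p} (\domain))\). Choosing \(p > 2\) and using the two-dimensional embedding \(W^{1, p} (\domain) \hookrightarrow L^\infty (\domain)\), I also record that \(\velocity \in L^\infty (\reals \times \domain, \reals^2)\); and the embedding \(W^{2, p} (\domain) \hookrightarrow C^1 (\closure{\domain})\) shows that \(\mathcal{K}_b[\omega]\) and its spatial gradient are bounded on \([0, +\infty) \times \domain\).

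It remains to control the time regularity of \(\mathcal{K}_b[\omega]\). For this I would use the transport equation \eqref{def_wk_sol_transport}, which asserts that \(\partial_t \omega = -\divergence (\velocity\,\omega)\) in the sense of distributions. Since \(\velocity\,\omega \in L^\infty (\reals \times \domain, \reals^2)\) by the previous step, the natural object is the solution operator \(\mathcal{L}_b\) sending \(g \in L^p (\domain, \reals^2)\) to the solution of \eqref{eq_stream_function_problem} with right-hand side \(-\divergence g\) and vanishing circulations; this operator extends \(\mathcal{K}_b\), and the key analytic ingredient is the divergence-form estimate \(\norm{\mathcal{L}_b[g]}{W^{1, p} (\domain)} \le C_p \norm{g}{L^p (\domain)}\), valid because \(b\) is smooth and bounded away from \(0\). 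By linearity and the transport equation,
\[
  \mathcal{K}_b[\omega (t)] - \mathcal{K}_b[\omega (s)] = \int_s^t \mathcal{L}_b\bigl[\velocity (\tau)\,\omega (\tau)\bigr] \dif \tau ,
\]
so applying the estimate with \(p > 2\) and \(W^{1, p} (\domain) \hookrightarrow L^\infty (\domain)\) gives \(\norm{\mathcal{K}_b[\omega (t)] - \mathcal{K}_b[\omega (s)]}{L^\infty (\domain)} \le C \abs{t - s}\). Thus \(t \mapsto \mathcal{K}_b[\omega (t)]\) is Lipschitz into \(C^0 (\closure{\domain})\), whence \(\partial_t \mathcal{K}_b[\omega] \in L^\infty ([0, +\infty) \times \domain)\); combined with the spatial bounds this gives \(\mathcal{K}_b[\omega] \in W^{1, \infty} ([0, +\infty) \times \domain)\).

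The main obstacle I anticipate is the rigorous handling of this time-regularity step. One must make distributional sense of \(\partial_t \omega = -\divergence (\velocity\,\omega)\) and justify the Bochner integral identity, which requires \(\tau \mapsto \velocity (\tau)\,\omega (\tau)\) to be measurable into \(L^p (\domain, \reals^2)\) and the time integral to commute with the continuous linear operator \(\mathcal{L}_b\). Most importantly, one must establish the divergence-form estimate for \(\mathcal{L}_b\), which is genuinely stronger than the \(L^p \to W^{2, p}\) mapping property of \cref{proposition_Green_stream_function} and must be drawn directly from the \(L^p\) Calderón--Zygmund theory of the Dirichlet problem rather than deduced from it. A minor preliminary point is the uniqueness of the velocity reconstruction used to identify \(\velocity (t)\) with \eqref{eq_velocity_reconstruction}, which follows since a field that is \(b\)-divergence-free, curl-free, tangent to \(\partial \domain\) and has vanishing circulations around every island must vanish.
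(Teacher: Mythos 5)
Your proposal is correct, and its first half (the \(t\)-uniform spatial regularity of \(\velocity(t)\) and of \(\mathcal{K}_b[\omega(t)]\)) coincides with the paper's argument: identify \(\velocity(t)\) with the reconstruction formula \eqref{eq_velocity_reconstruction}, apply the \(L^p \to W^{2,p}\) bound of \cref{proposition_Green_stream_function}, and use the Morrey embedding with \(p>2\). Where you genuinely diverge is the time-regularity step. The paper argues by duality: it pairs \(\mathcal{K}_b[\omega]\) with \(\partial_t\varphi\), uses the symmetry of the kernel to transfer \(\mathcal{K}_b\) onto the test function, invokes the weak transport equation with spatial test function \(\mathcal{K}_b[\varphi(t)]\), and bounds the resulting term \(\int_\domain \omega(t)\,\velocity(t)\cdot\nabla\mathcal{K}_b[\varphi(t)]\) by \(C\,\norm{\omega(t)}{L^\infty(\domain)}^2\int_\domain\abs{\varphi(t)}\), using only the integrable kernel bound \(\abs{\nabla_x G_b(x,y)}\le C/\abs{x-y}\) coming from \cref{proposition_Gradient_bound} and the Lipschitz regularity of the remainder in \cref{proposition_b_Green_function}; an \(L^1\)--\(L^\infty\) duality then gives \(\partial_t\mathcal{K}_b[\omega]\in L^\infty\). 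You instead write \(\partial_t\mathcal{K}_b[\omega]=\mathcal{L}_b[\velocity\,\omega]\) for a divergence-form solution operator and invoke the estimate \(\norm{\mathcal{L}_b[g]}{W^{1,p}(\domain)}\le C\norm{g}{L^p(\domain)}\). The two mechanisms are dual formulations of the same inequality, since \(\int_\domain\mathcal{L}_b[g]\,\varphi=\int_\domain g\cdot\nabla\mathcal{K}_b[\varphi]\); your route is therefore sound, but it imports a Calder\'on--Zygmund-type estimate for divergence-form data that the paper never needs to state, whereas the paper's version follows from kernel bounds it has already established. One shared technicality you should make explicit rather than merely flag: the transport identity \eqref{def_wk_sol_transport} is stated only for spatially compactly supported test functions, while both \(\mathcal{K}_b[\varphi]\) and the weak formulation defining \(\mathcal{L}_b\) require testing against functions that only vanish, or are constant, on \(\partial\domain\); this extension is exactly \cref{lemma_Transport_TestFunctions}, and it is legitimate at this stage of the proof because the first half already yields \(\velocity\in L^\infty([0,+\infty),W^{1,1}(\domain))\) and \(\velocity\cdot\normal=0\) in the sense of traces.
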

\begin{proof}
\resetconstant
We first observe that by \cref{proposition_Green_stream_function},
for almost every \(t \in [0, + \infty)\), we have 
\begin{equation}
    \norm{\mathcal{K}_b [\vortex (t)]}{W^{2, p} (\domain)}
  \le
    \C \norm{\vortex (t)}{L^p (\domain)}
 \le
    \Cl{cst_aich1Pughu} \norm{\vortex (t)}{L^\infty (\domain)}
  \le
    \Cr{cst_aich1Pughu} \norm{\vortex}{L^\infty ([0, +\infty) \times \domain)}
    .
\end{equation}
In particular, by taking \(p > 2\), we have by the supercritical Sobolev embedding theorem that \(\mathcal{K}_b[\vortex] \in L^\infty([0, + \infty), W^{1, \infty} (\domain))\) and thus 
\[
 \norm{u}{L^\infty([0, + \infty), W^{1, p} (\domain))}
 \le \C \norm{\vortex}{L^\infty ([0, +\infty) \times \domain)}.
\]
Next, we have in view of the definition of \(\mathcal{K}_b\), for every \(\varphi \in C^\infty_c ((0, +\infty) \times \domain)\), 
\[
  \int_0^{+\infty}\int_{\domain} \mathcal{K}_b [\vortex ]\, \partial_t \varphi 
 =\int_0^{+\infty} \int_{\domain} \vortex  \,\partial_t \mathcal{K}_b [\varphi]
.\]
Using the evolution equation for the vorticity $\vortex$
(\cref{def_weak_solution_vortex} \eqref{def_wk_sol_transport}),
we obtain, since \(\varphi = 0\) on \(\{0\} \times \domain\) and thus \( \mathcal{K}_b [\varphi] = 0\) on \(\{0\} \times \domain\),
\[
\begin{split}
  \biggabs{\int_0^{+\infty}\int_{\domain} \mathcal{K}_b [\vortex] \, \partial_t \varphi }
  &=
    \biggabs{\int_0^{+\infty}\int_{\domain} \vortex  \, \velocity (t) \cdot \nabla \mathcal{K}_b [\varphi]}\\
  &\le 
    \C 
    \norm{\vortex (t)}{L^\infty (\domain)}^2
    \iint_{(0,{+\infty}) \times \domain} \abs{\mathcal{K}_b[\varphi]}\\
& \le     \C 
    \norm{\vortex (t)}{L^\infty (\domain)}^2
    \iint_{(0,{+\infty}) \times \domain} \abs{\varphi}
\end{split}
\]
This implies that the weak derivative $\partial_t \mathcal{K}_b$
belongs to \(L^\infty ([0, +\infty) \times \domain, \reals)\). Therefore we deduce \(\mathcal{K}_b[\vortex] \in W^{1, \infty} ([1, +\infty) \times \domain)\).
\end{proof}

The regularity that we have obtained so far implies that in fact the spatial boundary conditions on the test functions in \cref{def_weak_solution_vortex} \eqref{def_wk_sol_transport} can be completely relaxed.

\begin{proposition}
\label{proposition_test_functions}
If a pair \((\vortex, \velocity)\)
with \(\vortex \in L^\infty ([0, +\infty) \times D, \reals)\)
and \(\velocity\in L^\infty ([0, +\infty), L^2(\domain, \reals^2))\) is a weak solution of the vorticity formulation of the lake equations with initial conditions \(\vortex_0\in L^\infty(\domain)\) and \(\velocity_0\in L^\infty(\domain,\reals^2)\), then 
for every test function \(\varphi \in C^1_c ([0, +\infty) \times \Bar{\domain})\), one has
\[
 \int_{\domain}
      \vortex_0\,
      \varphi (0, \cdot)
      +
  \int_0^{+\infty} 
   \int_{\domain}
      \vortex \,\bigl(\partial_t \varphi
  +
      \velocity \cdot \nabla \varphi\bigr)
    = 0.
\]
\end{proposition}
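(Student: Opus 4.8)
The plan is to relax the spatial support condition in \cref{def_weak_solution_vortex}~\eqref{def_wk_sol_transport} by a boundary-layer cutoff, exploiting the impermeability condition \(\velocity \cdot \normal = 0\) together with the additional regularity provided by \cref{propositionRegularity}. I would fix \(\theta \in C^\infty ([0, +\infty), [0, 1])\) with \(\theta = 0\) on \([0, \tfrac12]\) and \(\theta = 1\) on \([1, +\infty)\), and for small \(\eta > 0\) set \(\zeta_\eta (x) \defeq \theta (\dist (x, \boundary \domain)/\eta)\). Since \(\boundary \domain\) is of class \(C^2\), the distance function is of class \(C^2\) in a tubular neighbourhood of \(\boundary \domain\); as \(\zeta_\eta\) only varies inside \(\{\eta/2 \le \dist (\cdot, \boundary \domain) \le \eta\}\), for \(\eta\) small enough \(\zeta_\eta \in C^1 (\Bar{\domain})\), it vanishes in a neighbourhood of \(\boundary \domain\), and hence \(\zeta_\eta \varphi \in C^1_c ([0, +\infty) \times \domain)\) for any \(\varphi \in C^1_c ([0, +\infty) \times \Bar{\domain})\).

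Applying \cref{def_weak_solution_vortex}~\eqref{def_wk_sol_transport} to \(\zeta_\eta \varphi\) and using \(\partial_t (\zeta_\eta \varphi) = \zeta_\eta \, \partial_t \varphi\) together with \(\nabla (\zeta_\eta \varphi) = \zeta_\eta \nabla \varphi + \varphi \nabla \zeta_\eta\), I would obtain
\[
  \int_{\domain} \omega_0 \, \zeta_\eta \, \varphi (0, \cdot)
  + \int_0^{+\infty} \int_{\domain} \omega \, \zeta_\eta \bigl( \partial_t \varphi + \velocity \cdot \nabla \varphi \bigr)
  + \int_0^{+\infty} \int_{\domain} \omega \, \varphi \, \velocity \cdot \nabla \zeta_\eta
  = 0 .
\]
Since \(0 \le \zeta_\eta \le 1\), \(\zeta_\eta \to 1\) pointwise on \(\domain\), \(\omega \in L^\infty\), \(\velocity \in L^\infty ([0, +\infty), L^2 (\domain))\) and \(\varphi\) is smooth with compact support in time, the first two integrals converge as \(\eta \to 0\) by dominated convergence to the two terms appearing in the statement. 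It therefore suffices to show that the boundary-layer term \(R_\eta \defeq \int_0^{+\infty} \int_{\domain} \omega \, \varphi \, \velocity \cdot \nabla \zeta_\eta\) tends to \(0\).

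The vanishing of \(R_\eta\) is the crux of the argument. Writing \(\nabla \zeta_\eta = \tfrac1\eta \theta'(\dist (\cdot, \boundary \domain)/\eta) \, \nabla \dist (\cdot, \boundary \domain)\), this gradient is supported in the layer \(\{\eta/2 \le \dist (\cdot, \boundary \domain) \le \eta\}\), is bounded by \(C/\eta\), and is asymptotically normal to \(\boundary \domain\) since \(\nabla \dist = -\normal\) on \(\boundary \domain\). By \cref{propositionRegularity} one has \(\velocity \in L^\infty ([0, +\infty), W^{1, p} (\domain))\) for every \(p \in [1, +\infty)\); choosing \(p > 2\) and invoking the Morrey embedding yields \(\velocity \in L^\infty ([0, +\infty), C^{0, \alpha} (\Bar{\domain}, \reals^2))\) for any fixed \(\alpha \in (0, 1)\). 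The continuity equation \cref{def_weak_solution_vortex}~\eqref{item_continuity}, together with \(b\) being bounded away from \(0\), encodes the boundary condition \(\velocity \cdot \normal = 0\) on \(\boundary \domain\), which now holds pointwise by continuity. Comparing \(\velocity (t, x)\) with its value at the nearest boundary point \(\pi (x)\), where \(\nabla \dist (x) = -\normal (\pi (x))\) and \(\velocity (t, \pi (x)) \cdot \normal (\pi (x)) = 0\), the Hölder bound gives the pointwise estimate \(\abs{\velocity (t, x) \cdot \nabla \dist (x, \boundary \domain)} \le C \, \dist (x, \boundary \domain)^\alpha\) uniformly in \(t\) throughout the tubular neighbourhood.

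Since the layer has measure \(\abs{\{\dist (\cdot, \boundary \domain) < \eta\}} \le C \eta\), this leads to
\[
  \abs{R_\eta} \le \norm{\omega}{L^\infty} \norm{\varphi}{L^\infty} \, T \cdot \frac{C}{\eta} \cdot \eta^\alpha \cdot C \eta \le C \, \eta^\alpha \xrightarrow[\eta \to 0]{} 0 ,
\]
where \(T\) bounds the temporal support of \(\varphi\). The main obstacle is thus entirely concentrated in this last estimate, and it is resolved precisely by the combination of the impermeability condition \(\velocity \cdot \normal = 0\) with the Hölder regularity of \(\velocity\) obtained in \cref{propositionRegularity}; every other step is a routine passage to the limit. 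Passing to the limit \(\eta \to 0\) in the displayed identity then yields the desired formula for all \(\varphi \in C^1_c ([0, +\infty) \times \Bar{\domain})\).
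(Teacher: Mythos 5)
Your proof is correct and follows the same overall strategy as the paper: truncate the test function by a cutoff supported away from \(\partial\domain\) and show that the resulting boundary-layer term vanishes thanks to the impermeability condition. The one place where you genuinely diverge is the mechanism that kills that term. The paper isolates this step as \cref{lemma_Transport_TestFunctions}, which assumes only \(\velocity \in L^\infty (W^{1,1}(\domain))\) with vanishing normal trace: there the layer term is bounded by \(C\int_0^T\int_{\{\dist(\cdot,\partial\domain)\le 1/n\}}\abs{\nabla\velocity}\) (a Hardy-type trace estimate absorbing the factor \(n\) coming from the cutoff gradient) and is sent to zero by absolute continuity of the integral, with no rate. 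You instead exploit the full strength of \cref{propositionRegularity}: taking \(p>2\) and applying Morrey's embedding gives \(\velocity(t)\in C^{0,\alpha}(\Bar{\domain})\) uniformly in \(t\), the no-flux condition then holds pointwise on \(\partial\domain\), and comparison with the nearest boundary point yields \(\abs{\velocity\cdot\nabla\dist(\cdot,\partial\domain)}\le C\dist(\cdot,\partial\domain)^\alpha\), whence the quantitative decay \(\abs{R_\eta}\le C\eta^\alpha\). Your argument is therefore less general --- it would not prove the appendix lemma under its stated \(W^{1,1}\) hypothesis --- but it is perfectly adequate here, where the extra regularity is available, and it produces an explicit rate. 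The only point worth spelling out is the passage from the space-time weak continuity equation of \cref{def_weak_solution_vortex}~\eqref{item_continuity} to the statement that for almost every \(t\) the normal trace of \(\velocity(t)\) vanishes (localize in time and use a countable dense family of spatial test functions); this is routine but should be mentioned.
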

\begin{proof}
This follows from \cref{propositionRegularity}, \cref{def_weak_solution_vortex} \eqref{def_wk_sol_transport} and \cref{lemma_Transport_TestFunctions}.
\end{proof}

\subsection{Transport of the potential vorticity}
The vorticity equation \eqref{eq_vorticity} can be rewritten as 
\begin{equation} 
  \partial_t\vortex + \divergence \bigl( \velocity\,\vortex \bigr) = 0
\end{equation}
and implies that the vortex circulation \(\Gamma (t)\) defined by \eqref{eqVortexCirculation} of classical solutions of the lake equations \eqref{eqLake} is conserved.
The next proposition shows that this is still the case for weak solutions of the vorticity formulation of the lake equations (\cref{def_weak_solution_vortex}).

\begin{proposition}
\label{proposition_continuity_vortex}
If \((\vortex, \velocity) \in L^\infty (\reals \times \domain, \reals) \times L^\infty( \reals, L^2 (\domain, \reals^2))\) is a weak solution to the vortex formulation of the lake equations, then 
\(\vortex \in C (\reals, L^1 (\domain))\) and for every \(t \in \reals\), 
\[
 \Gamma(t) = \int_{\domain} \vortex (t) = \int_{\domain} \vortex_0 = \Gamma(0).
\]
\end{proposition}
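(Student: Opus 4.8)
The plan is to read \cref{def_weak_solution_vortex} \eqref{def_wk_sol_transport} as a weak formulation of the continuity equation $\partial_t \omega + \divergence(\omega\,\velocity) = 0$ and to exploit the density-preserving character of the velocity field. First I would record that, because $\divergence(b\,\velocity) = 0$ and $b \in C^2(\closure{\domain},(0,+\infty))$ is bounded away from $0$, the velocity $\velocity$ preserves the measure $b \dif x$, and the \emph{potential vorticity} $\omega/b$ is transported by $\velocity$ in the non-conservative form $\partial_t(\omega/b) + \velocity\cdot\nabla(\omega/b) = 0$; indeed, a direct computation shows that $\divergence(\omega\,\velocity) = b\,\velocity\cdot\nabla(\omega/b)$ once $\divergence(b\,\velocity)=0$ is used.

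For the continuity statement $\omega \in C(\reals, L^1(\domain))$ I would invoke the regularity already available: by \cref{propositionRegularity}, $\velocity \in L^\infty([0,+\infty), W^{1,p}(\domain))$ for every $p \in [1,+\infty)$, and since $b$ is of class $C^2$ and bounded below, $\divergence \velocity = -\velocity\cdot\nabla(\ln b) \in L^\infty([0,+\infty)\times\domain)$. These are exactly the hypotheses under which the DiPerna--Lions renormalization theory \cite{DiPerna_Lions_1989}, in the variant for $b$-preserving fields recalled in the appendix, applies: the transported quantity $\omega/b$ --- hence $\omega$ itself, as $b$ is bounded above and below --- has a representative in $C([0,+\infty), L^1(\domain))$, and the transport is effected by a flow preserving the measure $b\dif x$.

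The conservation then follows in two complementary ways. On the one hand, the measure-preserving transport of $\omega/b$ leaves $\int_\domain (\omega/b)\,b = \int_\domain \omega$ invariant. On the other hand, and more elementarily, I would apply \cref{proposition_test_functions} to a test function depending on time alone, $\varphi(t,x) = \eta(t)$ with $\eta \in C^1_c([0,+\infty))$: since $\nabla\varphi = 0$, the identity becomes
\[
  \eta(0) \int_\domain \omega_0 + \int_0^{+\infty} \eta'(t) \Bigl(\int_\domain \omega(t)\Bigr) \dif t = 0,
\]
which is the weak statement that $t \mapsto \int_\domain \omega(t)$ has vanishing distributional derivative on $[0,+\infty)$ and boundary value $\int_\domain\omega_0$. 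Combined with the continuity obtained above, this forces $\int_\domain \omega(t) = \int_\domain \omega_0 = \Gamma(0)$ for every $t \ge 0$.

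It remains to reach negative times, which I would do by applying the previous two steps to the time-reversed pair $t \mapsto (-\omega(-t), -\velocity(-t))$, which is again a weak solution on $[0,+\infty)$ by \cref{def_weak_solution_vortex}; the two sign changes cancel in the spatial integral and yield $\int_\domain \omega(t) = \int_\domain\omega_0$ for $t \le 0$ as well. The main obstacle is the \emph{strong} $L^1$ continuity rather than the conservation itself: testing against the constant function only delivers weak-$L^1$ (hence distributional) control of $t \mapsto \int_\domain\omega(t)$, and upgrading this to continuity in the $L^1$ norm genuinely requires the renormalization property of the transport equation, which is where the $W^{1,p}$-regularity of $\velocity$ from \cref{propositionRegularity} enters decisively.
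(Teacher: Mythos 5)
Your proposal is correct and follows essentially the same route as the paper: rewrite the weak transport identity in terms of the potential vorticity $\omega/b$ (using $\divergence(b\,\velocity)=0$ and the $W^{1,p}$ regularity of $\velocity$ from \cref{propositionRegularity}), then apply the DiPerna--Lions result of \cref{proposition_DiPerna_Lions} to $f_0=\omega_0/b$, which yields both the $C(\reals,L^1)$ continuity and the conservation of $\int_\domain\omega$ through the $b\dif x$-measure-preserving flow, with negative times handled by the time-reversal built into \cref{def_weak_solution_vortex}. Your supplementary argument testing against a function of time alone is a harmless redundancy, and you correctly identify that the strong $L^1$ continuity is the part that genuinely needs the renormalization theory.
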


\begin{proof}
We follow \cite{Lacave_Pausader_Nguyen_2014}*{\S 2.3}.
We observe that for every \(t \in \reals\), \(\velocity (t) \in W^{1, 1}_{\mathrm{loc}} (\domain)\) and thus  if \(\varphi \in C^1_c ([0, +\infty) \times \domain)\), we have
\begin{multline}
   \int_0^{+\infty} 
      \int_{\domain} 
          \frac{\vortex}{b}
          \, 
          \bigl(\partial_t \varphi + \divergence (\velocity \varphi)\bigr)
        +
    \int_{\domain} 
      \frac{\vortex_0}{b}
      \,
      \varphi (0, \cdot) 
 \\
  = 
    \int_0^{+\infty} 
      \int_{\domain} 
        \vortex \,\bigl(\partial_t \tfrac{\varphi}{b} + \velocity \cdot \nabla \tfrac{\varphi}{b}\bigr)
    +
      \int_{\domain} 
        \vortex_0
        \,
        \frac{\varphi (0, \cdot)}{b}  
   =
    0.
\end{multline}
By \cref{propositionRegularity}, \cref{proposition_DiPerna_Lions} is applicable to \(f_0 = \vortex_0/b\) and gives the conclusion.
\end{proof}

\subsection{Conservation of energy}
We now consider the total kinetic energy defined by \eqref{eq_def_energy}.
For classical solutions, one can show that the energy equation
\begin{equation}
  \partial_t \bigl(b\,\tfrac{\abs{\velocity}^2}{2}\bigr)
  + \divergence \bigl(b\, \velocity\, \tfrac{\abs{\velocity}^2}{2}\bigr)
  = -\divergence \bigl(b\, \velocity\, h\bigr)
\end{equation}
holds, and consequently, since \(b\, \velocity \cdot \normal = 0\) on the boundary, we have conservation of the total kinetic energy for classical solutions.
The total kinetic energy still remains constant for weak solutions of the vortex formulation of the lake equations (\cref{def_weak_solution_vortex}).

\begin{proposition}[Conservation of energy]
\label{proposition_conservation_energy}
If \((\vortex, \velocity) \in L^\infty (\reals \times \domain, \reals) \times L^\infty( \reals, L^2 (\domain, \reals^2))\) is a weak solution to the vortex formulation of the lake equations, then for almost every \(t \in \reals\),
\[
  E (t) = E (0).
\]
\end{proposition}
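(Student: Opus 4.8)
The plan is to start from the closed expression for the energy provided by \cref{proposition_Energy_Formula}. Since the circulations \(\vortexstrength_i\) around the boundary components are conserved (\eqref{item_vorticity} in \cref{def_weak_solution_vortex}), the functions \(\chi \defeq \sum_{i=1}^{\nbislands}\vortexstrength_i\,\psi_i\) and the constant \(\mathcal{C}\defeq \tfrac12\sum_{i,j=1}^{\nbislands}\vortexstrength_i\vortexstrength_j\int_{\domain}\frac{\gradient\psi_i\cdot\gradient\psi_j}{\depth}\) do not depend on time, so that the energy formula reads
\[
\energy(t) = \tfrac12\int_{\domain}\vortex(t)\,\mathcal{K}_b[\vortex(t)] + \int_{\domain}\vortex(t)\,\chi + \mathcal{C}.
\]
All the time dependence is thus carried by the linear term \(\int_{\domain}\vortex\,\chi\) and by the quadratic self-interaction \(\tfrac12\int_{\domain}\vortex\,\mathcal{K}_b[\vortex]\). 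The latter is the only genuine difficulty: the natural test function \(\mathcal{K}_b[\vortex(t)]\) depends on the unknown and on \(t\), so it cannot be inserted directly into the transport identity of \cref{def_weak_solution_vortex}.

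To overcome this I would exploit the symmetry of \(\mathcal{K}_b\). Writing \(a(\vortex_1,\vortex_2)\defeq\int_{\domain}\vortex_1\,\mathcal{K}_b[\vortex_2]\), the integration by parts already performed in the proof of \cref{proposition_Energy_Formula} (whose boundary terms vanish because \(\mathcal{K}_b[\vortex]\) is constant on each component of \(\boundary\domain\) and carries no circulation) gives \(a(\vortex_1,\vortex_2)=\int_{\domain}\frac{\gradient\mathcal{K}_b[\vortex_1]\cdot\gradient\mathcal{K}_b[\vortex_2]}{\depth}\), so \(a\) is symmetric. Polarizing \(a(\vortex(t),\vortex(t))-a(\vortex(s),\vortex(s))=a\bigl(\vortex(t)+\vortex(s),\vortex(t)-\vortex(s)\bigr)\) and adding the linear term, I obtain, for fixed endpoints \(s,t\),
\[
\energy(t)-\energy(s) = \int_{\domain}\bigl(\vortex(t)-\vortex(s)\bigr)\,G_{s,t}, \qquad G_{s,t}\defeq \tfrac12\bigl(\mathcal{K}_b[\vortex(t)]+\mathcal{K}_b[\vortex(s)]\bigr)+\chi .
\]
The decisive gain is that, \(s\) and \(t\) being fixed, \(G_{s,t}\) is a \emph{fixed} function lying in \(C^1(\closure{\domain})\) by \cref{proposition_Green_stream_function} and the embedding \(W^{2,p}\hookrightarrow C^1\) for \(p>2\), together with the smoothness of the \(\psi_i\).

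I would then feed this fixed test function into the transport equation. Applying \cref{proposition_test_functions} to \(\varphi(\tau,x)=\theta(\tau)\,G_{s,t}(x)\) and using \(\vortex\in C(\reals,L^1(\domain))\) from \cref{proposition_continuity_vortex}, the map \(\tau\mapsto\int_{\domain}\vortex(\tau)\,G_{s,t}\) is absolutely continuous with derivative \(\int_{\domain}\vortex(\tau)\,\velocity(\tau)\cdot\gradient G_{s,t}\), whence
\[
\energy(t)-\energy(s) = \int_s^t\!\!\int_{\domain}\vortex(\tau)\,\velocity(\tau)\cdot\Bigl(\tfrac12\gradient\mathcal{K}_b[\vortex(t)]+\tfrac12\gradient\mathcal{K}_b[\vortex(s)]+\gradient\chi\Bigr)\dif\tau .
\]
Fixing \(t_0\), taking \(s=t_0\) and \(t=t_0+\eta\), and letting \(\eta\to 0\), I would pass to the limit in the inner integral: by \cref{propositionRegularity} and the convergence \(\vortex(\tau)\to\vortex(t_0)\) in every \(L^p(\domain)\) (from \cref{proposition_continuity_vortex} and the uniform \(L^\infty\) bound), the continuity of \(\mathcal{K}_b\colon L^p\to W^{2,p}\hookrightarrow C^1\) yields \(\velocity(\tau)=(\flipgradient\mathcal{K}_b[\vortex(\tau)]+\sum_i\vortexstrength_i\flipgradient\psi_i)/\depth\to\velocity(t_0)\) and \(\gradient G_{t_0,t_0+\eta}\to\gradient\psi(t_0)\) uniformly on \(\closure{\domain}\), where \(\psi(t_0)=\mathcal{K}_b[\vortex(t_0)]+\chi\) is the full stream function \eqref{eq_stream_function_decomposition}.

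The argument then closes on the pointwise orthogonality relation: since \(\depth\,\velocity=\flipgradient\psi\) by the reconstruction \eqref{eq_velocity_reconstruction}, one has \(\velocity\cdot\gradient\psi=\depth^{-1}\flipgradient\psi\cdot\gradient\psi=0\) almost everywhere. Hence the inner integral converges, uniformly in \(\tau\in[t_0,t_0+\eta]\), to \(\int_{\domain}\vortex(t_0)\,\velocity(t_0)\cdot\gradient\psi(t_0)=0\), so the difference quotient \(\bigl(\energy(t_0+\eta)-\energy(t_0)\bigr)/\eta\) tends to \(0\) at every \(t_0\). As \(\energy\) is moreover continuous — indeed Lipschitz, the integrand above being bounded uniformly in \(s,t\) — it is constant, which gives \(\energy(t)=\energy(0)\). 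The only real obstacle is the quadratic self-interaction term; once the polarization identity has recast it as a linear transport test against the fixed \(C^1\) function \(G_{s,t}\) and the orthogonality \(\flipgradient\psi\cdot\gradient\psi=0\) has been invoked, every remaining step is a soft continuity argument.
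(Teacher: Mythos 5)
Your proof is correct and rests on the same three pillars as the paper's argument: the energy formula of \cref{proposition_Energy_Formula}, the symmetry of the bilinear form \((\omega_1, \omega_2) \mapsto \int_{\domain} \omega_1\, \mathcal{K}_b[\omega_2]\), and the pointwise orthogonality \(\velocity \cdot \nabla \psi = \depth^{-1} \flipgradient \psi \cdot \nabla \psi = 0\) coming from the reconstruction \eqref{eq_velocity_reconstruction}. The one place where you genuinely diverge is the treatment of the quadratic self-interaction. The paper proves a weak derivation formula (\cref{lemma_derivative_quadratic}) by combining the same symmetry identity, written for the increment between \(t\) and \(t-h\), with the fact established in \cref{propositionRegularity} that \(\partial_t \mathcal{K}_b[\omega] \in L^\infty\), and then tests the transport equation against the time-dependent function \(\mathcal{K}_b[\omega]\,\theta\). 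You instead polarize between two \emph{frozen} times \(s\) and \(t\), which turns the increment \(E(t) - E(s)\) into the pairing of \(\omega(t) - \omega(s)\) with a fixed function \(G_{s,t} \in C^1(\closure{\domain})\), so that only \cref{propTotalVorticityDerivative} (transport against a time-independent test function) and the continuity \(\omega \in C(\reals, L^1(\domain))\) of \cref{proposition_continuity_vortex} are needed before passing to the limit in the difference quotient of \(E\) itself. This buys you independence from the weak time-differentiability of \(\mathcal{K}_b[\omega]\) and from the difference-quotient-to-weak-derivative step of \cref{lemma_derivative_quadratic}, at the modest cost of a double limit (\(\tau \to t_0\) inside \(\eta \to 0\)), which you correctly control through the uniform convergence of \(\nabla \mathcal{K}_b[\omega(\tau)]\) furnished by the continuity of \(\mathcal{K}_b : L^p(\domain) \to W^{2,p}(\domain) \hookrightarrow C^1(\closure{\domain})\) for \(p > 2\).
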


The proof of \cref{proposition_conservation_energy} relies on the following derivation formula.

\begin{lemma}
\label{lemma_derivative_quadratic}
Given a weak solution of the vortex formulation of the lake equations
\((\vortex, \velocity) \in L^\infty ([0, +\infty) \times \domain, \reals) \times L^\infty( [0, +\infty), L^2 (\domain, \reals^2))\), we have for every \(\theta \in C^\infty_c ([0, +\infty))\),
\begin{multline}
 \frac{1}{2}\int_0^{+\infty} \biggl(\int_{\domain} \vortex (t)\, \mathcal{K}_b[\vortex (t)]\biggr) \theta' (t) \dif t
 \\
 = -\frac{\theta (0)}{2} \int_{\domain} \vortex_0\, \mathcal{K}_b[\vortex_0] 
 - \int_0^{+\infty} \biggl(\int_{\domain} \vortex (t)\, \partial_t \mathcal{K}_b[\vortex](t)\biggr)\, \theta (t) \dif t.
\end{multline}
\end{lemma}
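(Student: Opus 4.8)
The plan is to recognise $F(t)\defeq\tfrac12\int_\domain\omega(t)\,\mathcal{K}_b[\omega(t)]$ as the vorticity part of the kinetic energy and to prove that, as a function of $t$, it is absolutely continuous with $F'(t)=\int_\domain\omega(t)\,\partial_t\mathcal{K}_b[\omega](t)$; the asserted identity is then simply the integration by parts of this fact against $\theta\in C^\infty_c([0,+\infty))$, using $F(0)=\tfrac12\int_\domain\omega_0\,\mathcal{K}_b[\omega_0]$ and the compact support of $\theta$. Formally this is just the product rule
\[
  \tfrac{\du}{\du t}\,\tfrac12\int_\domain\omega\,\mathcal{K}_b[\omega]
  = \tfrac12\int_\domain\partial_t\omega\,\mathcal{K}_b[\omega]
  + \tfrac12\int_\domain\omega\,\partial_t\mathcal{K}_b[\omega],
\]
where the two terms on the right coincide because $\mathcal{K}_b$ is self-adjoint. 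The whole difficulty is that $\omega$ possesses no pointwise time derivative, so the first term has no direct meaning and must be reinterpreted.

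First I would record the self-adjointness of $\mathcal{K}_b$: for $f,g\in L^p(\domain)$, integrating by parts in the weak formulation of \eqref{eq_stream_function_problem} yields
\[
  \int_\domain f\,\mathcal{K}_b[g]
  = \int_\domain \frac{\nabla\mathcal{K}_b[f]\cdot\nabla\mathcal{K}_b[g]}{b}
  = \int_\domain g\,\mathcal{K}_b[f],
\]
the boundary contributions vanishing because $\mathcal{K}_b[\cdot]$ is zero on $\partial\domain_0$, is constant on each $\partial\island_i$, and has vanishing circulation $\int_{\partial\island_i} b^{-1}\,\tfrac{\partial\mathcal{K}_b[f]}{\partial\normal}=0$ there; this is exactly the computation already used in \cref{proposition_Green_stream_function,propositionRegularity}.

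The key step is a symmetrised difference quotient. Writing $g\defeq\mathcal{K}_b[\omega]$, I would split the increment into the $\omega$-increment and the $g$-increment and use self-adjointness together with the linearity of $\mathcal{K}_b$ to convert the former into a second $g$-increment, namely $\int_\domain(\omega(t+h)-\omega(t))\,\mathcal{K}_b[\omega(t+h)]=\int_\domain(g(t+h)-g(t))\,\omega(t+h)$, which gives the symmetric identity
\[
  F(t+h)-F(t)
  = \tfrac12\int_\domain\bigl(\omega(t+h)+\omega(t)\bigr)\bigl(g(t+h)-g(t)\bigr).
\]
This is the crucial manoeuvre: the inaccessible time variation of $\omega$ is traded for that of $g=\mathcal{K}_b[\omega]$, which is controlled since $\mathcal{K}_b[\omega]\in W^{1,\infty}([0,+\infty)\times\domain)$ by \cref{propositionRegularity}.

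Finally I would pass to the limit $h\to 0$. Writing $g(t+h)-g(t)=\int_t^{t+h}\partial_s g\dif s$ and using $\partial_t\mathcal{K}_b[\omega]\in L^\infty$ (\cref{propositionRegularity}), the bound $\abs{F(t+h)-F(t)}\le C\,\abs{h}$ shows that $F$ is Lipschitz, while the Lebesgue differentiation theorem gives $\tfrac1h\int_t^{t+h}\partial_s g\dif s\to\partial_t g(t)$ in $L^1(\domain)$ for almost every $t$; combined with the continuity $\omega\in C(\reals,L^1(\domain))$ and the uniform $L^\infty$ bound from \cref{proposition_continuity_vortex}, the product converges and yields $F'(t)=\int_\domain\omega(t)\,\partial_t\mathcal{K}_b[\omega](t)$ for almost every $t$. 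Since $F$ is then absolutely continuous and continuous at $t=0$ with $\omega(0)=\omega_0$, the integration by parts against $\theta$ produces the stated formula. The main obstacle is precisely this low time regularity: neither the product rule nor an integration by parts in time is available directly, and the argument rests entirely on the self-adjointness trick above, which replaces the missing time derivative of $\omega$ by that of $\mathcal{K}_b[\omega]$.
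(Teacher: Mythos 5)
Your proposal is correct and follows essentially the same route as the paper: the crucial step in both is the symmetrization identity $\int_\domain \omega(t+h)\,\mathcal{K}_b[\omega(t+h)]-\omega(t)\,\mathcal{K}_b[\omega(t)]=\int_\domain(\omega(t+h)+\omega(t))(\mathcal{K}_b[\omega(t+h)]-\mathcal{K}_b[\omega(t)])$ obtained from the self-adjointness of $\mathcal{K}_b$, combined with the bound $\mathcal{K}_b[\omega]\in W^{1,\infty}$ from \cref{propositionRegularity} and the continuity $\omega\in C(\reals,L^1(\domain))$ from \cref{proposition_continuity_vortex}. The only (immaterial) difference is that the paper shifts the difference quotient onto $\theta$ and passes to the limit in the integrated identity, whereas you establish that $F$ is Lipschitz with the stated a.e.\ derivative and then integrate by parts.
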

\begin{proof}
For every \(h \in (0, +\infty)\), we have by a change of variable
\begin{multline}
\label{eq_ohf1aCh7ei}
 \frac{1}{2} \int_0^{+\infty} \biggl(\int_{\domain} \vortex (t)\, \mathcal{K}_b[\vortex (t)]\biggr) \frac{\theta (t + h) - \theta (t)}{h} \dif t
 \\
  = \frac{1}{2} \int_0^{+\infty} \biggl(\int_{\domain} \frac{\vortex (t-h) \,\mathcal{K}_b[\vortex (t-h)]- \vortex (t)\, \mathcal{K}_b[\vortex (t)]}{h} \biggr)\,\theta (t)\dif t
 \\=
 - \frac{1}{2 h} 
 \int_0^{h} \biggl(\int_{\domain} \vortex (t)\, \mathcal{K}_b[\vortex (t)]\biggr) \theta (t) \dif t .
\end{multline}
By Lebesgue's dominated convergence theorem, we have 
\begin{multline}
\label{eq_UCoogh7If9}
 \lim_{h \to 0} \frac{1}{2} \int_0^{+\infty} \biggl(\int_{\domain} \vortex (t)\, \mathcal{K}_b[\vortex (t)]\biggr) \frac{\theta (t + h) - \theta (t)}{h} \dif t
 \\
 =  \frac{1}{2} \int_0^{+\infty} \biggl(\int_{\domain} \vortex (t)\, \mathcal{K}_b[\vortex (t)]\biggr) \theta' (t) \dif t.
\end{multline}
Since \(\vortex \in C([0, +\infty), L^1 (\domain))\) by \cref{proposition_test_functions}, we also have 
\begin{equation}
\label{eq_ohphaep2Ph}
\lim_{h \to 0}\frac{1}{h}
\int_0^{h} \biggl(\int_{\domain} \vortex (t)\, \mathcal{K}_b[\vortex (t)]\biggr) \theta (t) \dif t
=
 \frac{1}{2} \int_{\domain} \vortex_0\, \mathcal{K}_b[\vortex_0] \theta (0).
\end{equation}
For every \(t \in [0, +\infty)\), since 
\[
 \int_{\domain} \vortex (t) \, \mathcal{K}_b[\vortex (t-h)]
 = \int_{\domain} \vortex (t - h) \, \mathcal{K}_b[\vortex (t)],
\]
we have 
\begin{multline*}
  \int_{\domain} 
      \vortex (t-h) 
      \, 
      \mathcal{K}_b[\vortex (t-h)]
    - 
      \vortex (t)
      \, 
      \mathcal{K}_b[\vortex (t)]
=
  \int_{\domain} 
    \bigl(\vortex (t-h) + \vortex (t)\bigr) 
    \bigl(\mathcal{K}_b[\vortex (t-h)] - \mathcal{K}_b[\vortex (t)]\bigr),
\end{multline*}
and thus by the weak convergence of difference quotients to the weak derivative and by \cref{proposition_continuity_vortex}, we obtain 
\begin{multline}
\label{eq_paPh7Chu3g}
  \lim_{h \to 0}
    \frac{1}{2} 
    \int_0^{+\infty} 
      \biggl(
        \int_{\domain} 
          \frac{
              \vortex (t-h)\, \mathcal{K}_b[\vortex (t-h)]
            - 
              \vortex (t)\, \mathcal{K}_b[\vortex (t)]
            }
            {h} 
      \biggr)
      \theta (t)
      \dif t\\
 = 
  \int_0^{+\infty} 
    \biggl(
      \int_{\domain} 
        \vortex (t) 
        \partial_t \mathcal{K}_b[\vortex](t)
    \biggr) 
    \theta (t) 
    \dif t
    .
\end{multline}
The conclusion follows from \eqref{eq_ohf1aCh7ei}, \eqref{eq_UCoogh7If9}, \eqref{eq_ohphaep2Ph} and \eqref{eq_paPh7Chu3g}.
\end{proof}

\begin{proof}[Proof of \cref{proposition_conservation_energy}]
We consider a function \(\theta \in C^\infty_c ([0, + \infty))\).
We want to prove that 
\[
  E (0) \,\theta (0) + \int_0^{+\infty} \theta'(t) E (t) \dif t = 0 .
\]
We rely on the energy formula of \cref{proposition_Energy_Formula}. We first have by \cref{lemma_derivative_quadratic},
\[
  \frac{1}{2} \int_0^{+\infty} \int_{\domain} \vortex (t)\, \mathcal{K}_b [\vortex (t)] \,\theta' (t) \dif t
 = \frac{\theta (0)}{2} \int_{\domain} \vortex_0\, \mathcal{K}_b [\vortex_0] + \int_0^{+\infty} \int_{\domain} \vortex (t)\, \partial_t (\mathcal{K}_b [\vortex] \theta) (t) \dif t ,
\]
since by Leibniz's rule \(\partial_t (\mathcal{K}_b [\vortex] \theta)(t) = \partial_t (\mathcal{K}_b [\vortex])(t) \theta (t) + \mathcal{K}_b[\vortex (t)] \theta'(t)\).
By definition of weak solution of the lake equations in the vorticity formulation (\cref{def_weak_solution_vortex} \eqref{def_wk_sol_transport}), we have then 
\begin{multline}
\label{cst_Cai3chel9e}
    \frac{1}{2} \int_0^{+\infty} \int_{\domain} \vortex (t)\, \mathcal{K}_b [\vortex (t)] \,\theta' (t) \dif t
 \\
 = - 
    \frac{1}{2} \int_{\domain} \vortex_0\, \mathcal{K}_b [\vortex_0] 
    -\int_0^{+\infty} \int_\domain \vortex (t)\, \velocity (t) \cdot \nabla \mathcal{K}_b [\vortex (t)]\, \theta (t) \dif t,
\end{multline}
and, for every \(i \in \{1, \dotsc, \nbislands\}\),
\begin{equation}
\label{cst_nooThah1oa}
  \int_0^{+\infty} \int_{\domain} \vortex (t) \, \psi_i \, \theta' (t) \dif t
 = -\theta (0)\int_{\domain} \vortex_0 \,\psi_i 
   - \int_0^{+\infty} \int_{\domain} \vortex (t)\, \velocity (t) \cdot \nabla \psi_i (t) \, \theta (t) \dif t.
\end{equation}
This implies thus, by combining \cref{proposition_Energy_Formula} with the identities \eqref{cst_Cai3chel9e} and \eqref{cst_nooThah1oa} and the velocity reconstruction formula \eqref{eq_velocity_reconstruction}, that 
\[ 
E (0) \,\theta (0) + \int_0^{+\infty} \theta'(t) E (t) \dif t
     = \int_0^{+\infty} \biggl(\int_{\domain} \vortex (t) \,\velocity (t) \cdot \velocity (t)^\perp\biggr) \theta (t) \dif t
     = 0. \qedhere
\]
\end{proof}

\section{Velocity reconstruction expansion}
\label{section_expansion}

In the sequel, we will need to understand the behaviour of the operator \(\mathcal{K}_b\) appearing in the construction in \cref{proposition_Green_stream_function} of the stream function satisfying \eqref{eq_stream_function_problem}. 
In view of \cref{proposition_Green_stream_function}, 
this can be done through the study of the operator \(\mathcal{G}_b\) associated to the solution of the Dirichlet problem \eqref{eq_stream_function_problem_Dirichlet}, whose existence was given in \cref{proposition_stream_function_Dirichlet}.

\subsection{Construction of the Green function}
We represent the Green function of the Dirichlet problem \eqref{eq_stream_function_problem_Dirichlet} as a perturbation of the Green operator of the classical Laplacian on the same domain with Dirichlet boundary conditions.

\begin{proposition}
\label{proposition_b_Green_function}
There exists a function \(S_b \in C^{0, 1} (\domain \times \domain)\) such that for every \(\vortex \in L^p (\domain)\) and every \(x \in \domain\),
\begin{equation*}
    \mathcal{G}_b [\vortex] (x)
  =
    \int_{\domain} 
      \Bigl(
          G_\domain (x, y) 
          \sqrt{b (x)\, b (y)} 
        + 
          S_b (x, y)
      \Bigr)
      \,
      \vortex (y) 
      \dif y.
\end{equation*}
\end{proposition}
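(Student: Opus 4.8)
The plan is to identify the integral kernel of \(\mathcal{G}_b\) as \(G_\domain(x,y)\sqrt{\depth(x)\,\depth(y)} + S_b(x,y)\) by showing that, after subtracting the explicit singular term, the remainder solves an elliptic Dirichlet problem whose right-hand side is regular enough. The symmetric prefactor \(\sqrt{\depth(x)\,\depth(y)}\) is dictated by two features. Near the diagonal the operator \(-\divergence(\depth^{-1}\gradient\,\cdot\,)\) behaves like \(-\depth(y)^{-1}\Delta\), so the fundamental singularity of its Green function is \(\depth(y)\,\tfrac{1}{2\pi}\ln\tfrac{1}{\abs{x-y}}\), which matches \(G_\domain(x,y)\sqrt{\depth(x)\,\depth(y)}\) as \(x\to y\); and the kernel of \(\mathcal{G}_b\) is symmetric in \((x,y)\) because \(-\divergence(\depth^{-1}\gradient\,\cdot\,)\) is self-adjoint, which forces a symmetric correction term.

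The heart of the argument is one computation. I fix \(y\in\domain\) and set \(g_y \defeq G_\domain(\cdot,y)\), so that \(-\Delta g_y = \delta_y\) in \(\domain\) and \(g_y = 0\) on \(\partial\domain\). Expanding \(\divergence\bigl(\depth^{-1}\gradient(\sqrt{\depth(y)}\,\sqrt{\depth}\,g_y)\bigr)\) by the Leibniz rule and using the identity \(\depth^{-1}\gradient\sqrt{\depth} = \tfrac12\,\depth^{-3/2}\gradient\depth = -\gradient(\depth^{-1/2})\), the two first-order cross terms cancel exactly, and the term \(\depth^{-1/2}\Delta g_y = -\depth(y)^{-1/2}\delta_y\) reproduces the Dirac mass. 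One obtains, in the sense of distributions on \(\domain\),
\[
  -\divergence\Bigl(\depth^{-1}\gradient\bigl(\sqrt{\depth(y)}\,\sqrt{\depth}\,g_y\bigr)\Bigr)
  = \delta_y - \sqrt{\depth(y)}\;\divergence\bigl(\depth^{-1}\gradient\sqrt{\depth}\bigr)\,g_y .
\]
I then set \(f_y \defeq \sqrt{\depth(y)}\,\divergence(\depth^{-1}\gradient\sqrt{\depth})\,g_y\) and \(S_b(\cdot,y) \defeq \mathcal{G}_b[f_y]\). Since \(\depth\in C^2(\closure{\domain},(0,+\infty))\), the coefficient \(\divergence(\depth^{-1}\gradient\sqrt{\depth})\) is bounded and continuous on \(\closure{\domain}\), while \(g_y\in L^q(\domain)\) for every \(q<\infty\) with norm bounded uniformly in \(y\) (its logarithmic singularity lies in every \(L^q\)); hence \(\norm{f_y}{L^q(\domain)}\) is bounded uniformly in \(y\). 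Consequently \(w_y \defeq \sqrt{\depth(y)}\,\sqrt{\depth}\,g_y + S_b(\cdot,y)\) solves \(-\divergence(\depth^{-1}\gradient w_y) = \delta_y\) in \(\domain\) with \(w_y = 0\) on \(\partial\domain\); by the defining property of \(\mathcal{G}_b\) and Fubini's theorem, \(\mathcal{G}_b[\omega](x) = \int_\domain w_y(x)\,\omega(y)\dif y\) for every \(\omega\in L^p(\domain)\), which is exactly the claimed decomposition.

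It remains to establish the regularity \(S_b\in C^{0,1}(\domain\times\domain)\). By \cref{proposition_stream_function_Dirichlet}, \(S_b(\cdot,y)=\mathcal{G}_b[f_y]\in W^{2,q}(\domain)\) with \(\norm{S_b(\cdot,y)}{W^{2,q}(\domain)}\le C\,\norm{f_y}{L^q(\domain)}\) uniformly in \(y\); choosing \(q>2\) and invoking the Sobolev embedding \(W^{2,q}(\domain)\hookrightarrow C^{1,\alpha}(\closure{\domain})\), the function \(S_b(\cdot,y)\) is Lipschitz in \(x\) with Lipschitz constant bounded uniformly in \(y\). Because both the constructed kernel and \(G_\domain(x,y)\sqrt{\depth(x)\,\depth(y)}\) are symmetric, \(S_b(x,y)=S_b(y,x)\), so \(S_b\) is equally Lipschitz in \(y\) uniformly in \(x\); combining the two one-sided estimates through the triangle inequality yields joint Lipschitz continuity. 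The main obstacle is the key distributional computation above, where the specific choice of the symmetric factor \(\sqrt{\depth(x)\,\depth(y)}\) is precisely what makes the first-order terms cancel and leaves a right-hand side with only a (harmless) logarithmic singularity; a secondary technical point is the rigorous passage from the pointwise identity for \(w_y\) to the kernel representation of \(\mathcal{G}_b\), which I would justify by testing against the elliptic problem and using uniqueness together with Fubini.
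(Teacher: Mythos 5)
Your proposal is correct and follows essentially the same route as the paper: the same ansatz \(G_\domain(x,y)\sqrt{\depth(x)\depth(y)}+S_b(x,y)\), the same cancellation of first-order terms leading to the same elliptic problem for \(S_b(\cdot,y)\) with right-hand side \(-\sqrt{\depth(y)}\,G_\domain(\cdot,y)\,\Delta(\depth^{-1/2})\), the same uniform-in-\(y\) \(W^{2,q}\) bound via the logarithmic estimate on \(G_\domain\) and Sobolev embedding, and the same use of symmetry of \(S_b\) to upgrade one-variable Lipschitz continuity to joint Lipschitz continuity. The only cosmetic difference is that the paper justifies the symmetry \(S_b(x,y)=S_b(y,x)\) by an explicit double-integration against two test densities, which is the rigorous form of your appeal to self-adjointness.
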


Here \(G_\domain : \domain \times \domain \to \reals\) is the Green function of the Laplacian \(-\Delta\) with Dirichlet boundary conditions on the boundary \(\partial D\), that is, if \(f \in L^p (\domain)\) and if 
\begin{equation*}
  u (x) = \int_{\domain} G_\domain (x, y) f(y) \dif y,
\end{equation*}
then 
\begin{equation*}
\left\{
\begin{aligned}
  -\Delta u & = f & & \text{in \(D\)},\\
  u & = 0 & & \text{on \(\partial D\)}.
\end{aligned}
\right.
\end{equation*}

In particular, \cref{proposition_b_Green_function} implies that the weighted Dirichlet problem \eqref{eq_stream_function_problem_Dirichlet} has a Green function \(G_b : D \times D \to \reals\) defined for each \(x, y \in \domain\) with \(x \ne y\) by 
\begin{equation*}
    G_b (x, y) 
  \defeq 
    G_\domain (x, y) \sqrt{b (x) b (y)} + S_b (x, y),
\end{equation*}
and thus the stream function problem \eqref{eq_stream_function_problem} also has a Green function in view of \cref{proposition_Green_stream_function}.

The proof of \cref{proposition_b_Green_function} will rely on the fundamental estimate, which is a classical consequence of the maximum principle for the Laplacian operator \(-\Delta\).

\begin{proposition}
\label{propositionGreenLogBound}
For every \(x, y \in \domain\)
  \begin{equation*} 
      0 
    \le 
      \greenlaplace(x,y) 
    \leq 
      \frac{1}{2\pi}\ln\frac{\diam (D)}{\abs{x - y}}.
  \end{equation*}
\end{proposition}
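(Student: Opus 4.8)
The plan is to reduce both inequalities to the minimum principle for harmonic functions, exploiting that $y \mapsto \greenlaplace(x, y)$ differs from the planar fundamental solution $\frac{1}{2\pi}\ln\frac{1}{\abs{x - y}}$ by a bounded harmonic corrector. Fix $x \in \domain$ and recall that $y \mapsto \greenlaplace(x, y)$ is harmonic in $\domain \setminus \{x\}$, vanishes on $\boundary \domain$, and behaves like $\frac{1}{2\pi}\ln\frac{1}{\abs{x - y}}$ as $y \to x$; in particular $\greenlaplace(x, y) \to +\infty$ as $y \to x$.

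For the lower bound I would apply the minimum principle on the punctured domain $\domain \setminus \closure{B(x, \varepsilon)}$ for $\varepsilon > 0$ small. There $\greenlaplace(x, \cdot)$ is harmonic, and its boundary splits into $\boundary \domain$, where it equals $0$, and $\boundary B(x, \varepsilon)$, where it is close to $\frac{1}{2\pi}\ln\frac{1}{\varepsilon}$ and hence strictly positive once $\varepsilon$ is small. The minimum principle then forces $\greenlaplace(x, \cdot) \ge 0$ on $\domain \setminus \closure{B(x, \varepsilon)}$, and letting $\varepsilon \to 0$ gives $\greenlaplace(x, y) \ge 0$ for every $y \in \domain$.

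For the upper bound I would introduce the auxiliary function
\[
  w(y) \defeq \frac{1}{2\pi}\ln\frac{\diam(\domain)}{\abs{x - y}} - \greenlaplace(x, y).
\]
The two logarithmic singularities at $y = x$ cancel exactly, so $-\Delta_y w = \delta_x - \delta_x = 0$ in the distributional sense; thus $w$ is bounded near $x$ and extends to a harmonic function on all of $\domain$. On the boundary $\boundary \domain$ one has $\greenlaplace(x, y) = 0$ and $\abs{x - y} \le \diam(\domain)$, whence $w \ge 0$ there, and the minimum principle propagates this to the interior, yielding $\greenlaplace(x, y) \le \frac{1}{2\pi}\ln\frac{\diam(\domain)}{\abs{x - y}}$.

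The only delicate points are analytic rather than conceptual: one must justify the removable-singularity step for $w$ (equivalently, that $\greenlaplace(x, \cdot) - \frac{1}{2\pi}\ln\frac{1}{\abs{x - \cdot}}$ extends harmonically across $x$) and confirm that the boundary values $\greenlaplace(x, \cdot) = 0$ are attained in a way compatible with the maximum principle, which follows from the $C^2$ regularity of $\boundary \domain$. I expect the main effort to be careful bookkeeping of the singular term rather than any genuine obstacle.
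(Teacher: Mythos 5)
Your argument is correct and is precisely the classical maximum-principle proof that the paper itself invokes without writing out: the text merely calls the bound ``a classical consequence of the maximum principle for the Laplacian operator \(-\Delta\)'' and supplies no proof. Both halves of your argument --- the minimum principle on \(\domain \setminus \closure{B(x,\varepsilon)}\) for the nonnegativity, and the comparison function \(w(y) = \frac{1}{2\pi}\ln\frac{\diam(\domain)}{\abs{x-y}} - \greenlaplace(x,y)\), which is harmonic after the singularities cancel and nonnegative on \(\partial\domain\) --- are sound.
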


\begin{proof}[Proof of \cref{proposition_b_Green_function}]
For each \(y \in \domain\), let \(S_b (\cdot, y) \in W^{1, 2}_0 (D)\) 
be the unique weak solution to the Dirichlet problem
\begin{equation*}
  \left\{
  \begin{aligned}
    -\divergence
      \bigl( 
        \depth^{-1}\gradient S_b (\cdot, y) 
      \bigr) 
  & =     
    -\greenlaplace(\cdot, y)
    \,
    \sqrt{\depth(y)}
    \,
    \Bigl(\Delta \frac{1}{\sqrt{b}}\Bigr)
    &
    &
    \text{in \(\domain\)},
    \\
      S_b (\cdot, y) 
    & =
      0
    &
    & \text{on \(\partial \domain\).}
  \end{aligned}
  \right.
\end{equation*}
Since \(b \in C^2 (\Bar{D}, (0, + \infty))\), 
by classical elliptic regularity estimates 
(see for example \cite{Gilbarg_Trudinger_1998}*{theorem~9.15}), for every \(y \in D\)
we have \(S_b (\cdot, y) \in W^{2, p} (D)\) for every \(p \in (1, +\infty)\) and 
\begin{equation}
\label{estimate_S_W2p}
    \norm{S_b (\cdot, y)}{W^{2, p} (D)}
  \le 
    \C 
    \norm{\greenlaplace(\cdot,y)}{L^p (\domain)}.
\end{equation}

By \cref{propositionGreenLogBound}, we have 
\begin{equation*}
    \norm{\greenlaplace(\cdot,y)}{L^p (\domain)} 
  \le 
    \biggl( \int_{B(0,\diam (\domain))} \Bigl(\ln \frac{\diam \domain}{\abs{z}} \Bigr)^p \dif z \biggr)^\frac{1}{p}
  \le 
    \C.
\end{equation*}
It follows in particular by the classical Sobolev embedding theorem and by \eqref{estimate_S_W2p} that 
\begin{equation*}
 \sup_{y \in D} \norm{\nabla S_b (\cdot, y)}{L^\infty (\domain)} < +\infty.
\end{equation*}

Finally, we observe that if \(\vortex_1, \vortex_2 \in L^p (\domain)\), we have 
\begin{multline*}
      \int_{D} 
        \vortex_1 
        \mathcal{G}_b[\vortex_2]
    - 
      \iint_{D \times D} 
        G (x, y) 
        \,
        \vortex_1 (x) 
        \,
        \vortex_2 (y)  
        \,
        \sqrt{b (x)\, b (y)} 
        \dif x \dif y\\
  =
      \int_{D} 
        \vortex_2 
        \,
        \mathcal{G}_b[\vortex_1]
    - 
      \iint_{D \times D}
        G (x, y) 
        \,
        \vortex_2 (x)
        \,
        \vortex_1 (y)  
        \,
        \sqrt{b (x)\, b (y)} 
        \dif x \dif y
\end{multline*}
and therefore 
\begin{equation*}
  \iint_{D \times D} 
    S_b (x, y)
    \,
    \vortex_1 (x)  
    \,
    \vortex_2 (y) 
    \dif x 
    \dif y
  =
    \iint_{D \times D} 
      S_b (x, y)
      \,  
      \vortex_2 (x)  
      \,\vortex_1 (y) 
      \dif x 
      \dif y
       .
\end{equation*}
It follows that for every \(x, y \in \domain\), \(S_b (x, y) = S_b (y, x)\), and thus the function \(S_b\) is Lipschitz-continuous on \(D \times D\).
\end{proof}

As a consequence of \cref{proposition_b_Green_function}, the velocity field \(\velocity\) admits the integral representation

\begin{proposition}
\label{velocityIntegralExpansion}
There exists a Lipschitz-continuous function \(R_b \in C^{0, 1} (\domain \times \domain)\) such that for every 
\(\vortex \in L^p (\domain)\), one has 
  \begin{equation*}
      \mathcal{K}_b [\vortex] (x)
    = 
        \int_{\domain}
          \bigl(
          \greenlaplace(x,y)
          \,
          \sqrt{\depth(x)\,\depth(y)}
          +
          R_b (x, y)\bigr)
          \,
          \vortex(y)
          \dif y. 
  \end{equation*}
\end{proposition}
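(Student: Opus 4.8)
The plan is to read off the claimed representation directly by combining the two decompositions already at our disposal. First, \cref{proposition_Green_stream_function} tells us that $\mathcal{K}_b$ differs from the Dirichlet solution operator $\mathcal{G}_b$ only through an integral operator with the smooth kernel $Q_b \in C^2(\closure{\domain}\times\closure{\domain})$, namely
\[
  \mathcal{K}_b[\vortex](x) = \mathcal{G}_b[\vortex](x) + \int_{\domain} Q_b(x, y)\,\vortex(y)\dif y.
\]
Next, \cref{proposition_b_Green_function} isolates the singular part of $\mathcal{G}_b$ as a multiple of the classical Green function, giving
\[
  \mathcal{G}_b[\vortex](x) = \int_{\domain}\bigl(\greenlaplace(x, y)\,\sqrt{\depth(x)\,\depth(y)} + S_b(x, y)\bigr)\,\vortex(y)\dif y,
\]
with $S_b \in C^{0, 1}(\domain\times\domain)$.

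I would then simply set $R_b \defeq S_b + Q_b$. Substituting the first identity into the second yields
\[
  \mathcal{K}_b[\vortex](x) = \int_{\domain}\bigl(\greenlaplace(x, y)\,\sqrt{\depth(x)\,\depth(y)} + R_b(x, y)\bigr)\,\vortex(y)\dif y,
\]
which is exactly the asserted formula. It remains only to verify the regularity of the remainder: since $S_b$ is Lipschitz-continuous on $\domain\times\domain$ by \cref{proposition_b_Green_function}, and $Q_b$ is of class $C^2$ on the compact set $\closure{\domain}\times\closure{\domain}$ and hence Lipschitz there, their sum $R_b$ belongs to $C^{0, 1}(\domain\times\domain)$. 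The functions $\psi_i$ are precisely those produced in \cref{proposition_island_stream}, where they are shown to be of class $C^2(\domain)$, so their asserted $C^1(\domain)$ regularity is immediate.

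As the statement is essentially a bookkeeping consequence of the two preceding propositions, I do not expect any genuine obstacle. The only point deserving a word of justification is that the non-singular remainder inherits Lipschitz, rather than merely continuous, regularity; this is guaranteed precisely because both constituents $S_b$ and $Q_b$ are already at least Lipschitz-continuous on the relevant bounded sets, and the logarithmic singularity on the diagonal has been entirely absorbed into the explicit term $\greenlaplace(x,y)\,\sqrt{\depth(x)\,\depth(y)}$.
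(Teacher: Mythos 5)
Your argument is exactly the paper's proof: the paper disposes of this statement in one line by combining \cref{proposition_Green_stream_function} and \cref{proposition_b_Green_function}, taking $R_b$ to be the sum of $S_b$ and $Q_b$ (the printed proof actually writes the relation as $S_b = R_b + Q_b$, an evident typo that the paper itself corrects later when it recalls $R_b = S_b + Q_b$ in the appendix). Your additional remarks on the Lipschitz regularity of the sum and on the $\psi_i$ from \cref{proposition_island_stream} are correct and only make explicit what the paper leaves implicit.
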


\begin{proof}
[Proof of \cref{velocityIntegralExpansion}]
This follows from \cref{proposition_Green_stream_function} and \cref{proposition_b_Green_function} with \(S_b = R_b + Q_b\). 
\end{proof}

\subsection{Estimate on the Green function}

We will also need a version of \cref{propositionGreenLogBound} which is sharper close to the boundary.
\begin{proposition}
\label{proposition_Green_D_upper}
There exists a constant \(C\) such that for every \(x, y \in \domain\),
\begin{equation*}
    G_\domain (x, y)
  \le
    \frac
      {1}
      {4 \pi}
    \ln
      \biggl(
          1
        +
          C
          \frac
            {\dist (x, \partial \domain) \dist (y, \partial \domain)}
            {\abs{x - y}^2}
      \biggr).
\end{equation*}
\end{proposition}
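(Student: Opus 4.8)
The target bound is, up to the value of the constant \(C\), precisely the Dirichlet Green function of a half-plane: if \(\dist(\cdot,\boundary\domain)\) is replaced by the linear height coordinate, then \(\frac{1}{4\pi}\ln\bigl(1+4\,d(x)\,d(y)/\abs{x-y}^2\bigr)\) is the Green function of an upper half-plane, obtained from the fundamental solution by a single reflection across the boundary. The content of the proposition is thus that \(\greenlaplace\) is dominated by a half-plane Green function, the reflection being responsible for \emph{halving} the coefficient of the logarithm near \(\boundary\domain\). The plan is to prove the estimate by a comparison argument, splitting \(\domain\times\domain\) according to the position of the two points relative to the boundary, so that the elementary bound of \cref{propositionGreenLogBound} covers all but one regime, the remaining regime being the one in which reflection is genuinely needed. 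Throughout I write \(d(x)\defeq\dist(x,\boundary\domain)\) and use that the \(C^2\) regularity of \(\boundary\domain\) furnishes a uniform radius \(r_0>0\) for the interior and exterior ball conditions and makes the signed distance \(C^2\) on the tubular neighbourhood \(\{d<r_0\}\).

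First I would treat the \emph{interior} regime \(d(x)\ge r_0\) and \(d(y)\ge r_0\). Here \(d(x)\,d(y)\ge r_0^2\), so the target dominates \cref{propositionGreenLogBound} as soon as \(\abs{x-y}^2+C\,r_0^2\ge (\diam\domain)^2\), which holds for every \(x,y\) once \(C\ge (\diam\domain)^2/r_0^2\); no near-diagonal difficulty arises because the distance budget \(d(x)\,d(y)\ge r_0^2\) already absorbs the whole logarithmic singularity.

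Next I would treat the regime in which at least one point, say \(x\), lies within \(r_0\) of \(\boundary\domain\) but the two points are \emph{far apart}, \(\abs{x-y}\ge r_0/2\). Since \(\greenlaplace(\cdot,y)\) is then harmonic near \(x\) and vanishes on \(\boundary\domain\), the boundary (barrier) gradient estimate together with the uniform bound on \(\greenlaplace(\cdot,y)\) coming from \cref{propositionGreenLogBound} gives a linear decay \(\greenlaplace(x,y)\le C'\,d(x)\); by symmetry and, when \(d(y)<r_0\), a two-sided barrier, this upgrades to \(\greenlaplace(x,y)\le A\,d(x)\,d(y)\) with \(A\) independent of \(x,y\). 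Writing \(s=d(x)\,d(y)/\abs{x-y}^2\) and using \(\abs{x-y}\ge r_0/2\), one has \(d(x)\,d(y)\le (\diam\domain)^2\), and the concavity bound \(\ln(1+C\sigma)\ge\sigma\ln(1+C)\) for \(\sigma\in[0,1]\) applied to \(\sigma=d(x)\,d(y)/(\diam\domain)^2\) shows that \(A\,d(x)\,d(y)\le\frac{1}{4\pi}\ln(1+Cs)\) once \(C\) is large. The remaining, and essential, case is \(d(x)<r_0\) together with \(\abs{x-y}<r_0/2\): the \emph{near-boundary, near-diagonal} regime, in which both points lie near \(\boundary\domain\) and the factor \(\tfrac12\) must be produced by reflection.

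The main obstacle is precisely this last case. Here I would flatten \(\boundary\domain\) on a fixed-size neighbourhood using the \(C^2\) chart provided by the signed distance function (equivalently, exploit the uniform exterior ball condition) and compare \(\greenlaplace(\cdot,y)\) with the explicit half-plane Green function centred at the reflected point: this comparison function vanishes on the flattened boundary, carries the same logarithmic singularity at \(y\), and dominates \(\greenlaplace(\cdot,y)\) on the rest of the boundary of the localization region by the interior and boundary-decay estimates already proved, so the maximum principle yields the bound locally, after which patching gives it on all of \(\domain\). The difficulty is twofold: on a curved \(C^2\) boundary the natural majorant \(\frac{1}{4\pi}\ln\bigl(1+C\,d(x)\,d(y)/\abs{x-y}^2\bigr)\) is no longer harmonic in \(x\), so one must verify that the curvature correction to \(-\Delta\) has a favourable sign, or is of lower order and absorbable into \(C\) — this is where the one-sided ball condition is used; and the local comparison must be patched to a global estimate without degrading the coefficient, so the errors created on the boundary of the localization region must be controlled by the already-established estimates. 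Producing exactly the coefficient \(\tfrac{1}{4\pi}\) rather than \(\tfrac{1}{2\pi}\), uniformly up to the curved boundary, is the crux of the argument.
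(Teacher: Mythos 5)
Your splitting into three regimes is sound, and the first two are handled correctly and completely: writing \(d(\cdot)=\dist(\cdot,\boundary\domain)\), the interior regime follows from \cref{propositionGreenLogBound} once \(C\ge(\diam\domain)^2/r_0^2\), and the separated near-boundary regime follows from the two-sided barrier bound \(\greenlaplace(x,y)\le A\,d(x)\,d(y)\) together with your chord inequality \(\ln(1+C\sigma)\ge\sigma\ln(1+C)\). But these are precisely the regimes in which the coefficient \(\frac{1}{4\pi}\) plays no role; the entire content of the proposition sits in your third regime, and there your argument is a plan rather than a proof, with a difficulty that you name but that is not absorbable in the way you hope. The signed-distance flattening is a \(C^2\) diffeomorphism but not a conformal map, so the pulled-back half-plane Green function is not harmonic: it solves an equation \(a^{ij}\partial_{ij}u+b^i\partial_i u=0\) with \(a^{ij}\) close to, but different from, \(\delta^{ij}\), and the error \((\delta^{ij}-a^{ij})\partial_{ij}\) applied to your majorant is of size \(\abs{x-y}^{-2}\) near the diagonal with no favourable sign --- the same order as the quantity being controlled, not lower order, and the one-sided ball condition does not sign it. So the maximum-principle comparison does not close as stated. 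To finish along real-variable lines you would have to split the third regime further: in the sub-regime \(\abs{x-y}\gtrsim\max(d(x),d(y))\) one needs the scale-invariant decay \(\greenlaplace(x,y)\le C\,d(x)\,d(y)/\abs{x-y}^2\) (iterated barriers at scale \(\abs{x-y}\), or boundary Harnack), and the complementary sub-regime \(\abs{x-y}\le d(y)/2\) then follows by comparing with the Green function of \(B(y,d(y)/2)\) inside that ball, using the constant bound just obtained on its boundary sphere; that is a correct but genuinely different argument from reflection.

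The paper takes a much shorter route that sidesteps the non-harmonicity issue entirely: for the unit disk one has the exact formula \(G_{\mathbb{D}^2}(x,y)=\frac{1}{4\pi}\ln\bigl(1+(1-\abs{x}^2)(1-\abs{y}^2)/\abs{x-y}^2\bigr)\), already of the desired form with the halved coefficient, and a Riemann map of \(\domain_0\) onto \(\mathbb{D}^2\) (which is \(C^1\) up to the \(C^2\) boundary, so boundary distances and chordal distances transform comparably) transports the Green function \emph{exactly}, harmonicity being conformally invariant in two dimensions; the islands contribute a bounded harmonic correction as in the proof of \cref{proposition_symmetric_gradient_estimate_boundary}. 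In two dimensions, conformality is exactly the ingredient that would make your ``flatten and reflect'' step legitimate; with only a \(C^2\) flattening it is not.
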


\begin{proof}
This can be obtained by observing that for the unit disk \(\mathbb{D}^2 \subset \plane\), one has for each \(x, y \in \mathbb{D}^2\) such that \(x \ne y\),
\[
  G_{\mathbb{D}^2} (x, y)
  = \frac{1}{4 \pi} \ln \left(1 + \frac{(1 - \abs{x}^2)(1 - \abs{y}^2)}{\abs{x - y}^2}\right),
\]
and by applying conformal mapping techniques as in the proof of \cref{proposition_symmetric_gradient_estimate_boundary}.
\end{proof}

\subsection{Gradient estimates of Green function of the Laplacian}
Our goal now is to obtain estimates on the derivative of the Green function \(G_D\).
A first classical estimate is available \cite{Bramble_Payne_1967}.

\begin{proposition}
\label{proposition_Gradient_bound}
There exists a constant \(C\) such that for every \(x, y \in \domain\), one has
\begin{equation*}
    \abs{\nabla G_\domain (x, y)} 
  \le    
    \frac
      {C}
      {\abs{x - y}}.
\end{equation*}
\end{proposition}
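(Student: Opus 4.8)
The plan is to reduce the global gradient estimate to a purely local one by splitting the domain into the region where $x$ and $y$ are close together and the region where they are far apart, exploiting the fact that the logarithmic singularity of the Green function is entirely captured by the fundamental solution of the Laplacian. Recall that $G_\domain(x,y) = \Phi(x-y) - H(x,y)$, where $\Phi(z) = \frac{1}{2\pi}\ln\frac{1}{\abs{z}}$ is the fundamental solution and $H(\cdot,y)$ is the harmonic corrector solving $-\Delta_x H(\cdot,y) = 0$ in $\domain$ with $H(\cdot,y) = \Phi(\cdot - y)$ on $\boundary\domain$. Differentiating in $x$, one has $\abs{\nabla_x\Phi(x-y)} = \frac{1}{2\pi\abs{x-y}}$, which already produces the desired $\frac{C}{\abs{x-y}}$ bound; the entire difficulty is therefore transferred to controlling $\abs{\nabla_x H(x,y)}$ uniformly.

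First I would treat the regime where $\abs{x-y} \ge \tfrac{1}{2}\dist(y,\boundary\domain)$, i.e.\ $y$ is comparatively close to the boundary relative to the separation. Here the trivial bound $\abs{\nabla_x\Phi(x-y)} \le \frac{C}{\abs{x-y}}$ suffices for the singular part, and for the corrector one uses that $H(\cdot,y)$ is harmonic together with interior gradient estimates for harmonic functions: on a ball $B(x, \tfrac{1}{2}\dist(x,\boundary\domain))$ one has $\abs{\nabla_x H(x,y)} \le \frac{C}{\dist(x,\boundary\domain)}\sup\abs{H(\cdot,y)}$, and the maximum principle bounds $\sup\abs{H(\cdot,y)}$ by $\sup_{\boundary\domain}\abs{\Phi(\cdot-y)}$, which is controlled since $G_\domain \ge 0$ gives $H \le \Phi$. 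Combined with \cref{propositionGreenLogBound} and a comparison of the relevant distance scales, this closes the far regime.

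The harder regime is where $\abs{x-y}$ is small compared to the distances to the boundary, so both points sit deep in the interior. Here I would again fix $y$ and regard $u(x) \defeq G_\domain(x,y)$ as a solution of $-\Delta u = 0$ on the punctured ball $B(y, r)\setminus\{y\}$ with $r \sim \dist(y,\boundary\domain)$. Writing $u = \Phi(\cdot-y) - H(\cdot,y)$ with $H$ harmonic on the full ball $B(y,r)$, I apply the interior gradient estimate for the harmonic function $H$ on $B(x,\tfrac{1}{2}\abs{x-y})$ if $\abs{x-y}$ is the smaller scale, or more cleanly on a ball of radius comparable to $\dist(y,\boundary\domain)$, to get $\abs{\nabla_x H(x,y)} \le \frac{C}{\dist(y,\boundary\domain)}\sup_{B(y,r)}\abs{H}$; the supremum of $\abs{H}$ is again bounded by the logarithmic bound of \cref{propositionGreenLogBound} on the bounded domain $\domain$. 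Since in this regime $\frac{1}{\dist(y,\boundary\domain)} \le \frac{C}{\abs{x-y}}$, the corrector contribution is absorbed into the target bound, and adding the singular term $\frac{1}{2\pi\abs{x-y}}$ from $\Phi$ gives the conclusion.

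I expect the main obstacle to be the uniformity of the constant $C$ as $y$ ranges over all of $\domain$, in particular as $y$ approaches $\boundary\domain$ where the relevant radii $\dist(y,\boundary\domain)$ degenerate to zero; the bookkeeping that keeps the two regimes consistently scaled so that every appearance of $\dist(y,\boundary\domain)^{-1}$ is genuinely dominated by $\abs{x-y}^{-1}$ is the delicate point. A clean way to organize this, following \cite{Bramble_Payne_1967}, is to rescale to the unit disk and use the explicit formula for $G_{\mathbb{D}^2}$ together with the $C^2$ regularity of $\boundary\domain$ to straighten the boundary locally, which converts the problem into finitely many local estimates on half-balls with uniformly controlled constants. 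This is precisely the conformal/boundary-flattening technique already invoked in the proof of \cref{proposition_Green_D_upper}, so I would cite that result's method and reuse the same machinery here.
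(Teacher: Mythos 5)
The paper does not actually prove this proposition; it only cites \cite{Bramble_Payne_1967}, so any argument you supply is additional material. Unfortunately the main line of your argument has a genuine gap. Writing \(G_\domain(x,y)=\tfrac{1}{2\pi}\ln\tfrac{1}{\abs{x-y}}-H(x,y)\) and invoking the interior gradient estimate for the harmonic corrector gives \(\abs{\nabla_x H(x,y)}\le C\,\dist(x,\partial\domain)^{-1}\sup_\domain\abs{H(\cdot,y)}\), and neither factor is under control. First, the maximum principle gives \(\sup_\domain\abs{H(\cdot,y)}=\tfrac{1}{2\pi}\sup_{\partial\domain}\abs{\ln\abs{\cdot-y}}\), which grows like \(\ln\tfrac{1}{\dist(y,\partial\domain)}\) as \(y\) approaches the boundary; the one-sided inequality \(H\le\Phi\) coming from \(G_\domain\ge 0\) does not remove this. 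Second, and more seriously, in your ``far'' regime the factor \(\dist(x,\partial\domain)^{-1}\) is not dominated by \(\abs{x-y}^{-1}\): take \(x\) and \(y\) both at distance \(\varepsilon\) from \(\partial\domain\) but at mutual distance of order \(\diam\domain\); the desired bound is \(O(1)\) while your estimate produces something of order \(\varepsilon^{-1}\ln\varepsilon^{-1}\). Even in the ``near'' regime, where \(\dist(x,\partial\domain)\gtrsim\abs{x-y}\), the unbounded factor \(\ln\tfrac{1}{\dist(y,\partial\domain)}\) leaves a logarithmic loss. This is not mere bookkeeping: the stated estimate genuinely uses the regularity of \(\partial\domain\) (it can fail on rough domains), so no proof based solely on interior estimates and the maximum principle can close.

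Your final paragraph, however, identifies the viable route, and it is the one consistent with the machinery already set up in the paper: on the unit disk the explicit formula for \(G_{\mathbb{D}^2}\) (recalled in the proof of \cref{proposition_symmetric_gradient_estimate_boundary}) yields \(\abs{\nabla G_{\mathbb{D}^2}(\Tilde{x},\Tilde{y})}\le C/\abs{\Tilde{x}-\Tilde{y}}\) by a direct computation; composing with the Riemann map \(\Phi\) of \(\domain_0\), which is a \(C^2\) diffeomorphism up to the boundary and hence bi-Lipschitz, transports the bound to \(\domain_0\), and the islands are handled as in that proof by observing that the difference \(G_\domain(\cdot,y)-G_{\mathbb{D}^2}(\Phi(\cdot),\Phi(y))\) is harmonic and uniformly bounded near each component, hence has bounded gradient there. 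If you carry that out it gives a complete proof (essentially the method of \cite{Bramble_Payne_1967} and of \cref{proposition_Green_D_upper}); but it should be presented as the proof itself, not as a remark appended to an argument that does not close.
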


We will need a more refined directional information about the Green function of the Laplacian.
We observe that in view of the definition of the regular part \(H_\domain : \domain \times \domain \to \reals\) for \(x, y \in \domain\) such that \(x \ne y\) as 
\begin{equation}
\label{eq_definition_regular_part}
    H_\domain (x, y) 
  \defeq 
    G_\domain (x, y) - \frac{1}{2\pi} \ln \frac{1}{\abs{x - y}},
\end{equation}
we have for every \(x, y \in \domain\) such that \(x \ne y\)
\begin{equation}
\label{symmetric_Gradient_Identity}
      \nabla G_\domain (x, y) 
    + 
      \nabla G_\domain (y, x)
  = 
    \nabla H_\domain (x, y) + \nabla H_\domain (y, x).
\end{equation}
Here above, \(\gradient\greenlaplace\) denotes the gradient of \(\greenlaplace\) with respect to its first variable.

In view of the regularity properties of the regular part of the Green function, we get 

\begin{proposition}
[Interior symmetric gradient estimate]
\label{proposition_symmetric_gradient_estimate}
For every \(\delta > 0\), there exist \(C > 0\) such that if \(\dist (x, \partial \domain) + \dist (y, \partial \domain) + \abs{x - y} \ge \delta\),
\begin{equation*}
    \abs{
        \nabla G_\domain (x, y) 
      + 
        \nabla G_\domain (y, x)
        }
  \le
    C.
\end{equation*}
\end{proposition}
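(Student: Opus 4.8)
The plan is to distinguish two regimes, according to whether $x$ and $y$ are well separated or close together, and to rely respectively on the pointwise gradient bound of \cref{proposition_Gradient_bound} and on the cancellation encoded in the symmetrization identity \eqref{symmetric_Gradient_Identity}.

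First, when $\abs{x - y} \ge \delta/4$, I would invoke \cref{proposition_Gradient_bound} directly: it gives $\abs{\nabla G_\domain (x, y)} \le C/\abs{x - y} \le 4 C/\delta$ and, applied at the point $(y, x)$, $\abs{\nabla G_\domain (y, x)} \le 4C/\delta$ as well, so that the symmetric gradient is controlled by a constant depending only on $\delta$ and $\domain$. Observe that here one of the two points may lie arbitrarily close to $\partial \domain$; this causes no difficulty, precisely because the estimate of \cref{proposition_Gradient_bound} is uniform up to the boundary.

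The substantial case is $\abs{x - y} < \delta/4$. Then the hypothesis $\dist (x, \partial \domain) + \dist (y, \partial \domain) + \abs{x - y} \ge \delta$ forces $\dist (x, \partial \domain) + \dist (y, \partial \domain) > 3\delta/4$, so the larger of the two distances, say $\dist (x, \partial \domain)$, is at least $3\delta/8$; since the distance to the boundary is $1$-Lipschitz, $\dist (y, \partial \domain) \ge \dist (x, \partial \domain) - \abs{x - y} > \delta/8$. Thus both points lie in the interior region $\{ z \in \domain : \dist (z, \partial \domain) \ge \delta/8 \}$, the threshold $\delta/4$ having been chosen below $\delta/3$ precisely so that closeness of the points together with the hypothesis pushes both of them into the interior. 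Here I would use the identity \eqref{symmetric_Gradient_Identity} to rewrite the symmetric gradient as $\nabla H_\domain (x, y) + \nabla H_\domain (y, x)$, where $H_\domain$ is the regular part \eqref{eq_definition_regular_part}: each $\nabla G_\domain$ blows up like $1/\abs{x - y}$ on the diagonal, whereas the two singular contributions cancel exactly in the symmetric sum, leaving only the regular part. For fixed interior $y$, the function $H_\domain (\cdot, y)$ is harmonic in $\domain$ with boundary values $\tfrac{1}{2\pi} \ln \abs{\cdot - y}$, and since $\delta/8 \le \abs{z - y} \le \diam (\domain)$ for every $z \in \partial \domain$, the maximum principle gives $\sup_\domain \abs{H_\domain (\cdot, y)} \le C_\delta$. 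Applying the interior gradient estimate for harmonic functions on the ball $B\bigl(x, \tfrac12 \dist (x, \partial \domain)\bigr) \subset \domain$ then yields $\abs{\nabla H_\domain (x, y)} \le \frac{C}{\dist (x, \partial \domain)}\, C_\delta \le C'_\delta$, and symmetrically $\abs{\nabla H_\domain (y, x)} \le C'_\delta$; summing proves the bound.

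The genuine content, and the only place where the full strength of the hypothesis is used, is this second case: no estimate on $\nabla G_\domain$ taken term by term can succeed when $x$ and $y$ are interior and close, because of the diagonal singularity, and it is exactly the cancellation in \eqref{symmetric_Gradient_Identity} that removes this obstruction before the elliptic estimates are applied. The remaining work — the Lipschitz bookkeeping of the distances and the standard maximum-principle and interior-gradient estimates for the harmonic function $H_\domain (\cdot, y)$ — is routine.
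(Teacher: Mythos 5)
Your proof is correct and rests on the same key ingredient as the paper's, namely the cancellation of the singular parts in the symmetrization identity \eqref{symmetric_Gradient_Identity}; the paper's own proof is a one-line appeal to this identity together with the ``smoothness'' of the regular part \(H_\domain\). Your case distinction --- handling separated points directly via \cref{proposition_Gradient_bound} and reserving the regular-part argument for the configuration where both points are forced into the interior --- is a worthwhile refinement rather than a different route: since \(H_\domain\) is smooth only on the open product \(\domain\times\domain\) and a uniform bound on \(\nabla H_\domain\) as one point approaches \(\partial\domain\) is not entirely immediate, your split makes the uniformity of the constant under the hypothesis \(\dist(x,\partial\domain)+\dist(y,\partial\domain)+\abs{x-y}\ge\delta\) transparent, which the paper leaves implicit.
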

\begin{proof}
This follows from \eqref{symmetric_Gradient_Identity} and the smoothness of the regular \(H_D\) part of the Green function defined in  \eqref{eq_definition_regular_part}.
\end{proof}

We now investigate what the estimate of \cref{proposition_symmetric_gradient_estimate} becomes near the boundary \(\partial \domain\). We start by observing the Green function of the Laplacian on the half-plane \(\mathbb{R}^2_+ = \big\{ x = (x_1, x_2) \in \plane \st  x_2 >0\big\}\), 
which is given for each \(x = (x_1, x_2), y = (y_1, y_2) \in \domain\) by
  \begin{equation*} 
      \greenlaplace[\mathbb{R}^2_+](x,y) 
    =   
      \frac{1}{4\pi}
        \ln \left(1 + \frac{4 x_2 y_2}{\abs{x-y}^2}\right) .
  \end{equation*}
The gradient of this function with respect to its first variable, is then given by
  \begin{equation*} 
      \gradient \greenlaplace[\mathbb{R}^2_+] (x,y)
    = 
      \frac
        {1}
        {\pi (\abs{x - y}^2 + 4 x_2 y_2)} 
      \Bigl((0, y_2) - 2 x_2 y_2 \frac{x - y}{\abs{x - y}^2}\Bigr)
    .
  \end{equation*}
One computes then that 
  \begin{equation}
  \label{eq_Symmetricpart_halfspace}
        \gradient\greenlaplace[\mathbb{R}^2_+](x,y)
      +
        \gradient\greenlaplace[\mathbb{R}^2_+](y,x)
    = 
      \frac
        {(0, x_2 + y_2)}
        {\pi(\abs{x - y}^2 + 4 x_2 y_2)}.
  \end{equation}
A notable feature of \eqref{eq_Symmetricpart_halfspace} is the vanishing of the tangential component.

We are going to extends formula~\eqref{eq_Symmetricpart_halfspace} to any bounded domain,
simply or non simply connected. To do this, we are going to show
that~\eqref{eq_Symmetricpart_halfspace} holds in a disk. From there, we are going to show 
that we can conformally transform $D_0$ to a disk, and find back similar estimates near
$\partial D_0$, that is: near the boundary of $\domain$ that does not correspond to islands.
The estimate would also hold in near every connected component of $\partial\domain$,
after transformation via a conformal map of the form $z\in\mathbb{C}\setminus\{0\}\mapsto 1/z$.

\begin{proposition}
\label{proposition_symmetric_gradient_estimate_boundary}
If \(\delta > 0\), there exists a constant \(C > 0\) such that if 
\(\dist (x, \partial D) + \dist (y, \partial \domain) + \abs{x - y} \le \delta\),
then 
\begin{equation*}
    \Bigg|
          \gradient\greenlaplace[\domain](x,y)
        +
          \gradient\greenlaplace[\domain](y,x)
   - \frac
            {
                x - P_{\partial \domain} (x) 
              + 
                y - P_{\partial \domain} (y)}
            {
              \pi 
              (
                  \abs{x - y}^2 
                + 
                  4 
                  \dist (x, \partial \domain) 
                  \dist (y, \partial \domain)
              )
            }
     \Bigg|
  \le
    C
  .
\end{equation*}
\end{proposition}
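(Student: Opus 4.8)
The plan is to reduce the near-boundary estimate to the explicit half-plane formula \eqref{eq_Symmetricpart_halfspace} by a local flattening of the boundary. Since $\domain$ is of class $C^2$ and the hypothesis $\dist(x,\partial\domain)+\dist(y,\partial\domain)+\abs{x-y}\le\delta$ forces both points into a fixed collar neighborhood of $\partial\domain$, I would first choose $\delta$ small enough that near any boundary point the domain is the image of a half-disk under a $C^2$ diffeomorphism $\fof$ that straightens $\partial\domain$; in fact, to preserve the Laplacian up to lower-order terms one wants a \emph{conformal} chart, which exists locally because $\partial\domain$ is $C^2$. Under such a conformal change of variables $G_\domain$ pulls back to the half-plane Green function $G_{\mathbb{R}^2_+}$ modulo a regular (harmonic) correction, so the singular structure of $\gradient G_\domain(x,y)+\gradient G_\domain(y,x)$ is dictated by the explicit expression \eqref{eq_Symmetricpart_halfspace}.

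The key steps, in order, are the following. First I would fix a boundary point and a conformal map $\fof$ from a half-disk onto a boundary neighborhood with $\fof^{-1}(\partial\domain)\subset\partial\mathbb{R}^2_+$, and record that $\fof,\fof^{-1}\in C^2$ with $D\fof$ bounded above and below; conformality gives $G_\domain(\fof(\xi),\fof(\eta))=G_{\mathbb{R}^2_+}(\xi,\eta)+h(\xi,\eta)$ with $h$ harmonic in each variable and $C^1$ up to the straightened boundary. Second, using \eqref{eq_definition_regular_part} together with the conformal covariance of the logarithmic singularity, the regular remainder $h$ and its gradient are bounded uniformly (this is where I invoke \cref{proposition_symmetric_gradient_estimate} for the genuinely interior contribution and the $C^2$-regularity of $\fof$ for the boundary contribution). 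Third I would differentiate the identity, apply the chain rule $\gradient_x G_\domain=(D\fof^{-1})^\dagger\,\gradient_\xi(G_{\mathbb{R}^2_+}+h)$, and match the leading singular term of $\gradient G_{\mathbb{R}^2_+}(\xi,\eta)+\gradient G_{\mathbb{R}^2_+}(\eta,\xi)$, namely $(0,\xi_2+\eta_2)/(\pi(\abs{\xi-\eta}^2+4\xi_2\eta_2))$, back to the $x$-variables. The final bookkeeping is to recognize that $\xi_2\approx\dist(x,\partial\domain)$, that $\xi_2+\eta_2$ times the Jacobian produces the normal vector $x-P_{\partial\domain}(x)+y-P_{\partial\domain}(y)$ up to $C^2$-controlled errors, and that $\abs{\xi-\eta}^2+4\xi_2\eta_2$ matches $\abs{x-y}^2+4\dist(x,\partial\domain)\dist(y,\partial\domain)$ up to a multiplicative factor absorbed into the constant $C$; the tangential components cancel exactly in the model and hence up to bounded error in $\domain$.

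The main obstacle I anticipate is controlling the difference between the \emph{exact} model quantity written in flattened coordinates and the \emph{target} quantity written in terms of $x$, $y$, their nearest-point projections $P_{\partial\domain}$, and the intrinsic distances. The danger is that the denominators $\abs{\xi-\eta}^2+4\xi_2\eta_2$ and $\abs{x-y}^2+4\dist(x,\partial\domain)\dist(y,\partial\domain)$ both degenerate as $x,y\to\partial\domain$, so one cannot merely Taylor-expand: the errors must be shown to be of the same \emph{size} as the denominator so that their ratio stays bounded. Concretely, the $C^2$ boundary gives $\xi_2=\dist(x,\partial\domain)(1+O(\dist))$ and $\fof(\xi)-\fof(P(\xi))=(x-P_{\partial\domain}(x))(1+O(\abs{x-P_{\partial\domain}(x)}))$ with curvature-type corrections, and one must verify that substituting these into the singular fraction changes the numerator and denominator only by quantities comparable to $\abs{x-y}^2+4\dist(x,\partial\domain)\dist(y,\partial\domain)$ itself. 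This is the delicate quantitative comparison; once it is in place, the stated $C^{0}$ bound follows by collecting the uniformly bounded regular term $\nabla h$, the Jacobian discrepancies, and these matched singular corrections.
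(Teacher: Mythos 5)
Your proposal is correct in substance and rests on the same underlying mechanism as the paper's proof --- a conformal reduction to a model domain whose Green function is explicit, followed by a careful matching of the degenerating numerator and denominator --- but the concrete route differs. The paper uses the \emph{global} Riemann map \(\Phi : \Bar{\domain}_0 \to \Bar{\mathbb{D}^2}\), computes the symmetric gradient sum for the \emph{disk} Green function explicitly, expands \((1-\abs{\Phi(x)}^2)(1-\abs{\Phi(y)}^2)\) and \(\Tilde x - \Tilde x/\abs{\Tilde x}\) in terms of \(\dist(x,\partial\domain)\) and \(x - P_{\partial\domain}(x)\), and then controls the discrepancy \(G_\domain - G_{\mathbb{D}^2}(\Phi(\cdot),\Phi(\cdot))\) near the islands (which is harmonic and bounded there, hence has bounded gradient), treating each island boundary by a further conformal map. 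You instead use \emph{local} conformal charts flattening \(\partial\domain\) onto \(\partial\mathbb{R}^2_+\) and the half-plane formula \eqref{eq_Symmetricpart_halfspace}, which is simpler than the disk formula and treats outer boundary and island boundaries uniformly; the price is that you must construct local conformal boundary charts with \(C^2\) regularity up to the boundary and control a harmonic remainder \(h\) that vanishes only on the \emph{flat} portion of the chart's boundary, so the claimed \(C^1\) bound on \(h\) up to that portion needs a Schwarz reflection (or boundary Schauder) argument on a shrunken half-disk rather than being automatic. Two small points you should make explicit in a full write-up: (i) when you symmetrize, the two pulled-back gradients carry the Jacobians \((D\fof(\xi))^\dagger\) and \((D\fof(\eta))^\dagger\) respectively, and commuting them produces an error term of the form \((D\fof(\eta)-D\fof(\xi))^\dagger[\nabla G_{\mathbb{R}^2_+}(\eta,\xi)]\), which is bounded only after invoking the \(O(1/\abs{\xi-\eta})\) gradient bound of \cref{proposition_Gradient_bound} against the \(O(\abs{\xi-\eta})\) Jacobian difference --- this is exactly the paper's estimate \eqref{eq_3aae14d5e7}; (ii) the quantitative comparison you correctly single out as delicate is resolved, as in the paper, by noting that each substitution error in the numerator is \(O(\dist^2 + \abs{x-y}^2)\), hence comparable to the denominator \(\abs{x-y}^2 + 4\dist(x,\partial\domain)\dist(y,\partial\domain)\) itself, so the ratio is uniformly bounded. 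With these two points supplied, your argument closes.
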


\begin{proof}
We start by observing the Green function of the Laplacian on the disk \(\mathbb{D}^2 \subset \plane\), 
which is defined for each \(\Tilde{x}, \Tilde{y} \in \mathbb{D}^2\) by
  \begin{equation*} 
      \greenlaplace[\mathbb{D}^2](\Tilde{x}, \Tilde{y}) 
    =   
      \frac{1}{4\pi}
      \ln 
        \,
        \biggl(
            1 
          + 
            \frac
              {(1 - \abs{\Tilde{x}}^2)(1 - \abs{\Tilde{y}}^2)}
              {\abs{\Tilde{x} - \Tilde{y}}^2}
        \biggr).
  \end{equation*}
The gradient of this function, for fixed \(\Tilde{y} \in\mathbb{D}^2\), is then given by
  \begin{equation*} 
      \gradient \greenlaplace[\mathbb{D}^2] (\Tilde{x}, \Tilde{y})
    = 
      \frac
        {-1}
        {2\pi (\abs{\Tilde{x} - \Tilde{y}}^2 + (1 - \abs{\Tilde{x}}^2)(1 - \abs{\Tilde{y}}^2)} 
      \Bigl(
          \Tilde{x} (1 - \abs{\Tilde{y}}^2) 
    +
          \frac
            {(1 - \abs{\Tilde{y}}^2) (1 - \abs{\Tilde{x}}^2)}
            {\abs{\Tilde{x} - \Tilde{y}}^2}
          (\Tilde{x} - \Tilde{y})
      \Bigr)
    ,
  \end{equation*}
and thus we have 
  \begin{equation*}
        \gradient\greenlaplace[\mathbb{D}^2](\Tilde{x}, \Tilde{y})
      +
        \gradient\greenlaplace[\mathbb{D}^2](\Tilde{y}, \Tilde{x})
    = -
      \frac
        {\Tilde{x} (1 - \abs{\Tilde{y}}^2) + \Tilde{y} (1 - \abs{\Tilde{x}^2})}
        {
          2\pi
          (
              \abs{\Tilde{x} - \Tilde{y}}^2 
            + 
              (1 - \abs{\Tilde{x}}^2)
              (1 - \abs{\Tilde{y}}^2)
          )
        }.
  \end{equation*}
We observe that 
\begin{equation*}
\begin{split}
    (1 - \abs{\Tilde{x}}^2)(1 - \abs{\Tilde{y}}^2) 
  &= 
    (1 - \abs{\Tilde{x}})
    (1 - \abs{\Tilde{y}})
    \bigl(2 -(1 - \abs{\Tilde{x}})\bigr)
    \bigl(2 - (1 - \abs{\Tilde{y}})\bigr)\\
  &=
    4 (1 - \abs{\Tilde{x}})(1 - \abs{\Tilde{y}}) 
    + 
      O \bigl((1 - \abs{\Tilde{x}})^2 + (1 - \abs{\Tilde{y}})^2\bigr) 
\end{split}
\end{equation*}
and 
\begin{equation*}
\begin{split}
      \Tilde{x} 
      (1 - \abs{\Tilde{y}}^2) 
    + 
      \Tilde{y} 
      (1 - \abs{\Tilde{x}^2})
  &=  (\Tilde{x} - \Tilde{y}) (\abs{\Tilde{y}}^2 - \abs{\Tilde{x}}^2)
    +2 
    \Bigl(
      \Tilde{x} -\frac{\Tilde{x}}{\abs{\Tilde{x}}}
      +
      \Tilde{y} - \frac{\Tilde{y}}{\abs{\Tilde{y}}}
      \Bigr)\\
&
\qquad
   + 
      \Tilde{x} 
      (\abs{\Tilde{x}} - 1)^2
      \Bigl(1 + \frac{2}{\abs{\Tilde{x}}}\Bigr)
    + 
      \Tilde{y} 
        (\abs{\Tilde{y}} - 1)^2
        \Bigl(1 + \frac{2}{\abs{\Tilde{y}}}\Bigr)
  \\
  & =
     2 
    \Bigl(
      \Tilde{x} -\frac{\Tilde{x}}{\abs{\Tilde{x}}}
    + \Tilde{y} - \frac{\Tilde{y}}{\abs{\Tilde{y}}}
      \Bigr)
      + O \bigl(\abs{x - y}^2 + (1 - \abs{\Tilde{x}})^2 + (1 - \abs{\Tilde{y}})^2\bigr).
      \end{split}
\end{equation*}
It thus follows that when \(\abs{x - y}^2 + (1 - \abs{\Tilde{x}})^2 + (1 - \abs{\Tilde{y}})^2\) is small enough,
\begin{equation}
\label{eq_symmetricpart_disk}
    \strabs{
          \gradient\greenlaplace[\mathbb{D}^2](\Tilde{x}, \Tilde{y})
        +
          \gradient\greenlaplace[\mathbb{D}^2](\Tilde{y}, \Tilde{x})
      -
        \frac
          {\Tilde{x} - \frac{\Tilde{x}}{\abs{\Tilde{x}}} + \Tilde{y} - \frac{\Tilde{y}}{\abs{\Tilde{y}}}}
          {\pi(\abs{\Tilde{x} - \Tilde{y}}^2 + 4 (1 - \abs{\Tilde{x}})(1 - \abs{\Tilde{y}}))}
          }
  \le 
    \C
    .
\end{equation}

By the classical Riemann mapping theorem (see for example \cite{Krantz_2006}*{theorems~4.0.1 and 5.2.1}),
there exists a map \(\Phi \in C^2 (\Bar{\domain}_0, \Bar{\mathbb{D}^2})\) which is a diffeomorphism up to the boundary and which is conformal map.
For each \(i \in \{1, \dotsc, \nbislands\}\), we have \(\Phi (\island_i) \cap \partial \mathbb{D}^2 = \emptyset\).

We define the function \(\breve{G}_D\) for each $x,y\in\domain$ as
\begin{equation*}
    \breve{G}_D (x, y) 
  \defeq 
    G_{\mathbb{D}^2} (\Phi (x), \Phi (y)) .
\end{equation*}
We compute
\begin{equation*}
  \nabla \breve{G}_D (x, y)
 = 
  (D\Phi(x))^*[\nabla G_{\mathbb{D}^2} (\Phi (x), \Phi (y))],
\end{equation*}
and thus 
\begin{multline*}
  \nabla \breve{G}_D (x, y)
+ \nabla \breve{G}_D (y, x)
= (D\Phi(x))^*[\nabla G_{\mathbb{D}^2} (\Phi (x), \Phi (y))
+ \nabla G_{\mathbb{D}^2} (\Phi (y), \Phi (x))]\\
+ (D \Phi (y) - D\Phi (x))^* [\nabla G_{\mathbb{D}^2} (\Phi (y), \Phi (x))].
\end{multline*}
We observe that when \(\abs{x - y} + \dist (x, \partial \domain_0) + \dist (y, \partial \domain_0) \to 0\), we have 
\begin{align*}
    \Phi (x) - \frac{\Phi (x)}{\abs{\Phi(x)}}
 & =      
    D \Phi (x) [x - P_{\partial D} (x)] 
  + 
    O (\dist (x, \partial \domain)^2),
    \\
      \Phi (y) - \frac{\Phi (y)}{\abs{\Phi(y)}}
 &=      
    D \Phi (x) [y - P_{\partial D} (y)] 
  + 
    O (\dist (y, \partial \domain)^2 + \abs{x - y}^2),\\
 \abs{\Phi (x) - \Phi (y)}^2 & = \abs{D \Phi (x)}^2 \abs{x - y}^2
 + O (\abs{x - y}^3),\\
 (1 - \abs{\Phi (x)}) &= \frac{\dist (x, \partial D)}{\abs{D \Phi (x)}}  + O ((\dist x, \partial \domain)^2),\\
 (1 - \abs{\Phi (y)}) &= \frac{\dist (y, \partial D)}{\abs{D \Phi (x)}}  + O ((\dist y, \partial \domain)^2 + \abs{x - y}^2),
\end{align*}
from which we deduce that 
\begin{multline}
\label{eq_cddabbe3c1}
\biggabs{
 (D\Phi(x))^*[\nabla G_{\mathbb{D}^2} (\Phi (x), \Phi (y))
+ \nabla G_{\mathbb{D}^2} (\Phi (y), \Phi (x))]
        \\
        - \frac
            {
                x - P_{\partial \domain} (x) 
              + 
                y - P_{\partial \domain} (y)}
            {
              \pi 
              (
                  \abs{x - y}^2 
                + 
                  4 
                  \dist (x, \partial \domain) 
                  \dist (y, \partial \domain)
              )
            }
}
\le \C.
\end{multline}
We also have immediately
\begin{equation}
\label{eq_3aae14d5e7}
 \abs{
 (D \Phi (y) - D\Phi (x))^* [\nabla G_{\mathbb{D}^2} (\Phi (y), \Phi (x))]
 }
 \le \C.
\end{equation}
Now we draw the link between the transport $\Breve{G}_D$ and the Green's function $G_D$
we target.

Since the map \(\Phi\) is conformal, the function \(G_{\mathbb{D}^2} (\cdot, y)\) is harmonic in \(\mathbb{D}^2 \setminus \{y\}\) and there exists \(\delta > 0\) such that if \(\dist (y, \partial \domain_0) \le \delta\), then \(\Theta (\cdot, y)
\defeq G_D (\cdot, y) - \Breve{G}_D (\cdot, y)\) is bounded uniformly in a neighborhood of \(\bigcup_{i = 1}^\nbislands \island_i\); this implies that for every \(x, y \in \domain\) such that \(\dist (x, \partial \domain) + \dist (y, \partial \domain) \le \delta\), we have 
\begin{equation}
\label{eq_1636b035e6}
 \abs{\nabla \Theta (x, y)} \le \C.
\end{equation}

The conclusion in when \(\dist (x, \partial \domain_0) + \dist (y, \partial \domain_0) + \abs{x - y} \le \delta\) follows by combining the estimates \eqref{eq_cddabbe3c1}, \eqref{eq_3aae14d5e7} and \eqref{eq_1636b035e6}.
The other components \(I_1, \dotsc, I_{\nbislands}\) of the boundary can be reduced to this case by a suitable adaptation of the conformal mapping $z\in\mathbb{C}\setminus\{0\}\mapsto 1/z$. 
\end{proof}

\section{Vortex estimates}
\label{section_concentration}

In this section we derive several estimates on the vorticity that govern the concentration of the vorticity. 

In order to control the shape of the vortex, we will recurrently rely on the Lorentz norm \cite{Lorentz_1950} of a vorticity  \(\vortex : \domain \to \reals\) which will be defined as 
\begin{multline}
  \label{eq_def_Lorentznorm}
      \lorentznorm{\vortex}
      \defeq 
      \sup 
      \biggl\{ 
        \int_{\plane} \Bigl(\ln \frac{1}{\abs{x}}\Bigr)_+ \tilde{\vortex} (x)  \dif x
        \st \text{\(\tilde{\vortex} : \plane \to \reals\) and for every \(\lambda > 0\) }, \\
        \abs{\{x \in \domain \st \abs{\vortex (x)} > \lambda \}}        = \abs{\{x \in \plane \st \abs{\tilde{\vortex} (x)} > \lambda \}}
     \biggr\}.
\end{multline}
We use the Lebesgue measure in the definition, despite the fact that the flow transports the measure with density \(b\) of the potential vorticity \(\vortex/b\).  

By the Hardy--Littlewood rearrangement inequality (see for example \cite{Lieb_Loss_2001}*{theorem 3.4}), the  supremum in \eqref{eq_def_Lorentznorm} is actually reached by the
radially symmetric nonincreasing rearrangement \(\vortex^\star\) of \(\vortex\), whose superlevel sets are balls centered on \(0\).

\subsection{Stream function estimate}
We first show how the Lorentz norm can be used to obtain a bound on the stream function.

\begin{proposition}%
[Boundedness of the stream function]%
\label{controlCondition}
There exists a constant $C>0$ that depends only on $\domain$ and $\depth$,
such that
for every non-negative function \(\vortex \in L^p (\domain)\) and every \(\rho > 0\),
we have
  \begin{multline*} 
      \norm{\mathcal{K}_b[\vortex]}{L^\infty (\domain)}
    \leq 
    \frac{1}{2 \pi}\int_{D}  \ln \frac{1}{\max (\rho,\abs{x -y})} \,\vortex (y)\, b(y) \dif y
      + 
        \frac
          {(\sup_{\domain} b) \,\rho^2 \lorentznorm{\vortex(\rho \cdot)}}
          {2 \pi}
      + C\int_\domain\vortex.
  \end{multline*}

\end{proposition}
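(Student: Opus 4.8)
The plan is to estimate $\mathcal{K}_b[\vortex]$ pointwise through the integral representation of \cref{velocityIntegralExpansion},
\[
  \mathcal{K}_b[\vortex](x) = \int_\domain \bigl(\greenlaplace(x,y)\,\sqrt{\depth(x)\,\depth(y)} + R_b(x,y)\bigr)\,\vortex(y)\dif y,
\]
splitting the kernel into its singular Green part and the regular remainder $R_b$. Since $R_b \in C^{0,1}(\closure{\domain}\times\closure{\domain})$ is bounded, the remainder contributes at most $C\int_\domain\vortex$, which will be absorbed in the last term; moreover, because $\vortex\ge 0$ and $\greenlaplace\ge 0$, one has $\mathcal{K}_b[\vortex]\ge -\sup\abs{R_b}\int_\domain\vortex$, so the negative part is also of this size. (For the simply-connected domains of \cref{truncatedEvolutionLaw} one even has $\mathcal{K}_b=\mathcal{G}_b\ge 0$, so that $\norm{\mathcal{K}_b[\vortex]}{L^\infty(\domain)}=\sup_\domain \mathcal{K}_b[\vortex]$ and only the upper bound matters.) It therefore suffices to bound $\mathcal{K}_b[\vortex](x)$ from above, uniformly in $x\in\domain$ and $\rho>0$.

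For the singular part I would first invoke \cref{propositionGreenLogBound} to write $\greenlaplace(x,y)\le\frac1{2\pi}\ln\frac{\diam\domain}{\abs{x-y}}$, and then replace the weight $\sqrt{\depth(x)\,\depth(y)}$ by $\depth(y)$. The crucial bookkeeping point is to carry out this replacement \emph{before} isolating the factor $\ln\frac1\rho$: since $\depth\in C^2$, the map $\sqrt\depth$ is Lipschitz, so the resulting error carries a factor $\abs{x-y}\ln\frac{\diam\domain}{\abs{x-y}}$, which is bounded on $\domain\times\domain$ because $t\mapsto t\ln\frac1t$ is bounded near $0$. This error, together with the harmless constant $\frac1{2\pi}\ln\diam\domain$, is of size $C\int_\domain\vortex$ and, decisively, is uniform in $\rho$. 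After this step the singular part is reduced to $\frac1{2\pi}\int_\domain\ln\frac1{\abs{x-y}}\,\depth(y)\,\vortex(y)\dif y$ up to $C\int_\domain\vortex$.

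Next I would split the logarithm as $\ln\frac1{\abs{x-y}} = \ln\frac1\rho + \ln\frac{\rho}{\abs{x-y}}$. The first piece produces exactly $\frac1{2\pi}\ln\frac1\rho\int_\domain\vortex\,\depth$, with no sign restriction on $\rho$. For the second piece I use $\ln\frac{\rho}{\abs{x-y}}\le\bigl(\ln\frac{\rho}{\abs{x-y}}\bigr)_+$ and $\depth(y)\le\sup_\domain\depth$, and then recognise, after extending $\vortex$ by zero to $\plane$, the resulting integral as the radially decreasing kernel $\bigl(\ln\frac{\rho}{\abs{\,\cdot\,}}\bigr)_+$ translated to $x$ and tested against $\vortex$. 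The Hardy--Littlewood rearrangement inequality \cite{Lieb_Loss_2001}*{theorem 3.4} then gives
\[
  \int_\plane \Bigl(\ln\tfrac{\rho}{\abs{x-y}}\Bigr)_+\vortex(y)\dif y \le \int_\plane \Bigl(\ln\tfrac{\rho}{\abs{y}}\Bigr)_+\vortex^\star(y)\dif y = \rho^2\,\lorentznorm{\vortex(\rho\,\cdot)},
\]
where the last equality follows from the change of variables $y=\rho z$ and the fact that the supremum defining $\lorentznorm{\,\cdot\,}$ is attained at the symmetric rearrangement $\vortex^\star$. Collecting the three contributions yields the asserted inequality for every $x\in\domain$ and every $\rho>0$.

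I expect the heart of the argument --- and its only genuinely non-routine step --- to be this rearrangement identification of the near-field logarithmic integral with $\rho^2\lorentznorm{\vortex(\rho\,\cdot)}$: the Lorentz norm is designed precisely so that its logarithmic weight matches the singularity of $\greenlaplace$, and the rescaling by $\rho$ absorbs the cut-off scale. The accompanying obstacle is purely one of uniformity: every error term must be controlled independently of $\rho$, which is why the weight replacement $\sqrt{\depth(x)\,\depth(y)}\to\depth(y)$ has to precede the splitting of $\ln\frac1\rho$ --- performed afterwards it would produce an error proportional to $\abs{\ln\rho}$ that could not be absorbed into $C\int_\domain\vortex$.
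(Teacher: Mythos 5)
Your proposal is correct and follows essentially the same route as the paper: the integral representation of \cref{velocityIntegralExpansion}, the logarithmic bound of \cref{propositionGreenLogBound}, absorption of the Lipschitz error from replacing \(\sqrt{\depth(x)\depth(y)}\) by \(\depth(y)\) into \(C\int_\domain\vortex\), the split \(\ln\frac1{\abs{x-y}}=\ln\frac1\rho+\ln\frac{\rho}{\abs{x-y}}\), and the rescaled Lorentz norm for the near-field part. Your explicit treatment of the lower bound via \(\mathcal{K}_b[\vortex]\ge-\sup\abs{R_b}\int_\domain\vortex\) is in fact slightly more careful than the paper's bare assertion that \(\mathcal{K}_b[\vortex]\ge 0\).
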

\begin{proof}
By writing the function \(\mathcal{K}_b[\vortex]\) in terms of integral kernels of \cref{velocityIntegralExpansion}, we have for each \(x \in \domain\),
  \begin{equation*} 
    \begin{split}
          0
      \le 
            \mathcal{K}_b[\vortex](x) 
      = 
        &
        \int_{\domain}
          \greenlaplace(x,y)\,
          \vortex(y)\,
          \depth(y)\dif y
        + 
        \int_{\domain}
          R_b(x,y)\,
          \vortex(y)\dif y  \\
        &\qquad +
        \int_{\domain}
          \greenlaplace(x,y)\,
          \vortex(y)
          \Bigl(\sqrt{b (x)} - \sqrt{b (y)}\Bigr)
          \sqrt{b (y)}
          \dif y 
  .
    \end{split}
  \end{equation*}
By \cref{velocityIntegralExpansion}, the function $R_b$ is uniformly bounded.
According to~\cref{propositionGreenLogBound} and the fact that the function 
$\sqrt{b}$ is Lipschitz-continuous, the term
	\[ \greenlaplace(x,y)\,
          \Bigl(\sqrt{b (x)} - \sqrt{b (y)}\Bigr)
          \sqrt{b (y)} \]
is uniformly bounded as $(x,y)\in\domain\times\domain$.
Moreover,
using the direct estimate of \cref{propositionGreenLogBound}, we obtain for every \(x \in \domain\)
  \begin{equation*}
              \int_{\domain}
          \greenlaplace(x,y)\,
          \vortex(y)\,
          \depth(y)\dif y 
   \leq         
        \frac{1}{2\pi}
        \int_{\domain}
          \ln\frac{\diam \domain}{\abs{x - y}}
          \vortex(y)
          \ 
          \depth(y)
          \dif y .
  \end{equation*}
Since for every \(x, y \in \plane\) such that \(x \ne y\),
\[
\ln\frac{\diam \domain}{\abs{x - y}}
= \ln \frac{\diam \domain}{\max (\rho, \abs{x - y})} + \biggl(\ln \frac{\rho}{\abs{y - x}}\biggr)_+,
\]
we have then for every \(x \in \domain\),
  \begin{multline*}
      \mathcal{K}_b{[\vortex]}(x) 
    \leq 
        \frac{1}{2\pi}
        \int_{D} \ln\frac{1}{\max (\rho, \abs{x - y})}\,\vortex(y)\, b(y) \dif y\\
      + 
        \frac{(\sup_\domain\depth)\, \rho^2}{2\pi}
        \int_{x + \rho z \in \domain}
          \positivepart{\ln\frac{1}{|z|}}
          \ \vortex(x + \rho z)
          \dif z
      + C\int_\domain \vortex ,
  \end{multline*}
for some constant $C>0$ that depends only on $\domain$ and $\depth$.
The conclusion now follows from the definition of the Lorentz norm \eqref{eq_def_Lorentznorm}.
\end{proof}

\subsection{Energy concentration estimate}
The following proposition gives estimates on the kinetic energy outside a ball.

\begin{proposition}
  \label{proposition_energy_concentration}
There exists a constant $C>0$ that depends only on $\domain$ and $\depth$,
  such that
  for every non-negative function \(\vortex \in L^p (\domain)\) and every \(R, r, \rho > 0 \) such that \(R \ge r + \rho\) and \(r \ge \rho\),
  we have
\begin{multline}
\int_{\domain \setminus B (a, R)} 
\abs{\velocity}^2 
\le 
C \biggl(\abs{\Gamma} \ln \frac{1}{\rho} \int_{\domain \setminus B (a, r)} \vortex
+ \abs{\Gamma}^2 \ln \frac{1}{R - r} + \rho^2 \lorentznorm{\vortex(\rho \cdot)}\abs{\Gamma} + \norm{\Gamma}{}^2
\biggr).
\end{multline}
\end{proposition}
Here and in the sequel, we use the notation
\begin{equation}
\label{eq_norm_Gamma}
 \norm{\Gamma}{} \defeq \abs{\Gamma} + \sum_{i = 1}^\nbislands \abs{\Gamma_i}.
\end{equation}
\begin{proof}[Proof of \cref{proposition_energy_concentration}]
We define the function \(\phi \defeq \mathcal{K}_b [\vortex] + 2 \sum_{i = 1}^{\nbislands} \Gamma_i \psi_i\).

For every \(x \in \domain \setminus B (a, R)\), we have if \(y \in B (a, r)\), \(\abs{x - y} \ge \abs{x - a} - \abs{y - a} \ge R - r \ge \rho\),
and thus by \cref{controlCondition}, 
\begin{equation}\label{eq_ov3aeBoo8uage5shu}
\begin{split}
  \phi (x)
  &\le 
  \frac{1}{2 \pi} \ln \frac{1}{\rho} \int_{\domain \setminus B (a, r)} \vortex b
  +
  \frac{1}{2 \pi} \ln \frac{1}{R - r}
  \int_{B (a, r)} \vortex b
  + \C \bigl(\rho^2 \lorentznorm{\vortex(\rho \cdot)} + \norm{\Gamma}{}\bigr)
  \\
  & \le 
  \lambda \defeq
  \Cl{cst} \Bigl( 
  \ln \frac{1}{\rho} \int_{\domain \setminus B (a, r)} \vortex
  +\abs{\Gamma} \ln \frac{1}{R - r} 
  + \rho^2 \lorentznorm{\vortex(\rho \cdot)} + \norm{\Gamma}{}\Bigr).
\end{split}
\end{equation}
We have by the representation formula for the velocity field \eqref{eq_velocity_reconstruction}, as in \eqref{eq_ohHiis8aaYae8yul4} in the proof of \cref{proposition_Energy_Formula}, 
\begin{equation}
\begin{split}
  \label{eq_quuSeiC8she0thoh1}
  \int_{\domain \setminus B (a, R)} 
  \abs{\velocity}^2
  &\le    \int_{\phi^{-1} ((0, \lambda))} 
  \abs{\velocity}^2\\
  &=
  \frac{1}{2}
  \int_{\phi^{-1} ((0, \lambda))} 
  \frac
  {\nabla \mathcal{K}_b[\vortex] 
    \cdot
    \textstyle \nabla \phi
  }
  {b}
  +
  \sum_{i, j = 1}^{\nbislands}
  \frac{\Gamma_i\, \Gamma_j}{2}
  \int_{\phi^{-1} ((0, \lambda))} 
  \frac
  {
    \nabla \psi_i \cdot \nabla \psi_j
  }
  {b}.
  \end{split}
\end{equation}
We have then,  
\begin{multline}
  \label{eq_Aebeopo9eelohgohV}
\frac{1}{2}
\int_{\phi^{-1} ((0, \lambda))} 
\frac
{\nabla \mathcal{K}_b[\vortex] 
  \cdot
  \nabla \phi
}
{b}
= \frac{1}{2} \int_{\domain} \frac{\nabla \mathcal{K}_b[\vortex]
  \cdot \nabla \max (\phi, \lambda)}{b}
= \frac{1}{2} \int_{\domain} \vortex \max (\phi,\lambda)
\le \frac{\abs{\Gamma}{\lambda}}{2}.
\end{multline}
On the other hand, we have   
\begin{equation}
  \label{eq_ohbeithah1Oash2bi}
\sum_{i, j = 1}^{\nbislands}
\frac{\Gamma_i\, \Gamma_j}{2}
\int_{\phi^{-1} ((0, \lambda))} 
\frac
{
  \nabla \psi_i \cdot \nabla \psi_j
}
{b}
\le \C \norm{\Gamma}{}^2.
\end{equation}
The conclusion follows from the combination of  \eqref{eq_quuSeiC8she0thoh1}, \eqref{eq_Aebeopo9eelohgohV} and \eqref{eq_ohbeithah1Oash2bi}, in view of the definition of \(\lambda\) in \eqref{eq_ov3aeBoo8uage5shu}.
\end{proof}

\subsection{Vortex concentration}
The next estimate shows that there is a characteristic radius \(\rho\) defined in terms of conserved quantities such that if the Lorentz norm at the scale \(\rho\) remains bounded, then the vorticity is concentrated in a region of radius comparable to \(\rho\).

\begin{proposition}%
[Concentration estimate]%
\label{concentrationEstimates}
There exists a constant \(C>0\) that depends only on \(\domain,\depth\),  such that
for all \(R>1\), we have
  \begin{equation*} 
      \inf_{a \in \domain} 
      \int_{\domain\setminus B(a, R\spacetypscale)}
        \vortex 
    \leq 
      \frac{C}{\ln(R)}
      \left( \rho^2 
          \lorentznorm{\vortex(\rho \cdot)} 
        + 
          \frac{\norm{\Gamma}{}^2}{\abs{\Gamma}} 
      \right),
\end{equation*}
where  
  \begin{equation}\label{defTypscale}
      \spacetypscale 
    \defeq 
      \exp
        \bigg( 
          -\frac
            {4\pi\,\energy}
            {\vortexstrength\ \totalvorticity} 
        \biggr)
    .
  \end{equation} 
\end{proposition}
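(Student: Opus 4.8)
The plan is to take the center $a$ realising the infimum to be a point where the stream function $\mathcal{K}_b[\vortex]$ is maximal, $M \defeq \max_{\closure{\domain}} \mathcal{K}_b[\vortex]$; this maximum exists and is nonnegative because $\mathcal{K}_b[\vortex] \in C(\closure{\domain})$ by \cref{velocityIntegralExpansion} and the Sobolev embedding, and is nonnegative as in the proof of \cref{controlCondition}. The whole argument hinges on the observation that the scale $\spacetypscale$ is defined through \eqref{defTypscale} precisely so that $2\energy = \frac{\vortexstrength\,\totalvorticity}{2\pi}\ln\frac{1}{\spacetypscale}$. Since $\vortex \ge 0$ and $\mathcal{K}_b[\vortex] \le M$, the energy identity $2\energy = \int_{\domain} \vortex\, \mathcal{K}_b[\vortex]$ of \cref{proposition_Energy_Formula} (with vanishing circulations) gives the lower bound
\[ M \ge \frac{2\energy}{\vortexstrength} = \frac{\totalvorticity}{2\pi}\ln\frac{1}{\spacetypscale}. \]

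The heart of the matter is a matching \emph{upper} bound for $M = \mathcal{K}_b[\vortex](a)$ that extracts a gain of $\ln R$ from the vorticity lying outside $B(a, R\spacetypscale)$. Using \cref{velocityIntegralExpansion} I would first write $\mathcal{K}_b[\vortex](a) = \int_{\domain} \greenlaplace(a,y)\, \depth(y)\, \vortex(y) \dif y + \mathcal{R}$, where the remainder $\mathcal{R}$ gathers the bounded kernel $R_b$ together with the error in replacing $\sqrt{\depth(a)\depth(y)}$ by $\depth(y)$; this error is $O(\vortexstrength)$ because $\sqrt{\depth(a)\depth(y)} - \depth(y) = O(\abs{a-y})$ and $\greenlaplace(a,y)\,\abs{a-y}$ is bounded by \cref{propositionGreenLogBound}, exactly as in the proof of \cref{controlCondition}. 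Then, bounding $\greenlaplace(a,y) \le \frac{1}{2\pi}\ln\frac{\diam \domain}{\abs{a-y}}$ and splitting the integral over $B(a, R\spacetypscale)$ and its complement, I would estimate on the far region $\ln\frac{\diam \domain}{\abs{a-y}} \le \ln\frac{\diam \domain}{R\spacetypscale}$, and on the near region decompose $\ln\frac{\diam \domain}{\abs{a-y}} = \ln\frac{\diam\domain}{\spacetypscale} + \positivepart{\ln\frac{\spacetypscale}{\abs{a-y}}}$, controlling the singular part by the Lorentz norm through $\int_{\domain} \positivepart{\ln\frac{\spacetypscale}{\abs{a-y}}} \vortex \le \spacetypscale^2 \lorentznorm{\vortex(\spacetypscale \cdot)}$ by the Hardy--Littlewood rearrangement inequality. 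Writing $\totalvorticity_{\mathrm{far}} \defeq \int_{\domain \setminus B(a, R\spacetypscale)} \vortex\, \depth$ and $\totalvorticity = \totalvorticity_{\mathrm{near}} + \totalvorticity_{\mathrm{far}}$, the coefficients of $\ln\frac{1}{\spacetypscale}$ reassemble into $\frac{\totalvorticity}{2\pi}$ and I obtain
\[ M \le \frac{\totalvorticity}{2\pi}\ln\frac{1}{\spacetypscale} - \frac{\totalvorticity_{\mathrm{far}}}{2\pi}\ln R + \frac{(\sup_{\domain} \depth)\, \spacetypscale^2 \lorentznorm{\vortex(\spacetypscale \cdot)}}{2\pi} + C\vortexstrength. \]

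Comparing with the lower bound, the divergent terms $\frac{\totalvorticity}{2\pi}\ln\frac{1}{\spacetypscale}$ cancel \emph{exactly}, leaving $\frac{\totalvorticity_{\mathrm{far}}}{2\pi}\ln R \le \frac{(\sup_{\domain}\depth)\spacetypscale^2\lorentznorm{\vortex(\spacetypscale\cdot)}}{2\pi} + C\vortexstrength$; dividing by $\ln R$ and using $\totalvorticity_{\mathrm{far}} \ge (\inf_{\domain}\depth)\int_{\domain\setminus B(a,R\spacetypscale)}\vortex$ yields the asserted bound for this $a$, hence for the infimum. I expect the main obstacle to be precisely this exact cancellation of the $\ln\frac{1}{\spacetypscale}$ term: it works only because $\spacetypscale$ is calibrated to the energy by \eqref{defTypscale} and because the near-singular contribution is measured by the \emph{same} Lorentz norm that bounds the stream function in \cref{controlCondition}. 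This also explains why the estimate stays valid when the vorticity fails to concentrate (for instance when it splits into several lumps): then $\totalvorticity_{\mathrm{far}}$ is large, but so is $\spacetypscale^2\lorentznorm{\vortex(\spacetypscale\cdot)}$, and the inequality remains consistent. In the multiply connected case, \cref{proposition_Energy_Formula} carries the extra circulation terms $\sum_i \vortexstrength_i \int_{\domain} \vortex\, \staticflow_i$ and $\sum_{i,j} \frac{\vortexstrength_i\vortexstrength_j}{2}\int_{\domain} \frac{\nabla\staticflow_i\cdot\nabla\staticflow_j}{\depth}$; since the $\staticflow_i$ are bounded, these add at most a multiple of $\norm{\Gamma}{}^2/\abs{\Gamma}$ after division by $\vortexstrength$, accounting for the remaining term in the statement.
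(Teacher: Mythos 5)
Your proof is correct and follows essentially the same strategy as the paper's: both compare a pointwise upper bound on the stream function obtained from the Green function decomposition and \cref{propositionGreenLogBound} (split at radius $R\spacetypscale$, with the near-field singularity absorbed by the Lorentz norm) against a lower bound coming from the energy identity of \cref{proposition_Energy_Formula}, the calibration \eqref{defTypscale} making the $\ln\frac{1}{\spacetypscale}$ terms cancel and the far region yielding the $\ln R$ gain. The only cosmetic difference is your choice of base point (a maximizer of $\mathcal{K}_b[\vortex]$ rather than a point of the superlevel set $\{\psi \ge \frac{1}{\Gamma}\int_\domain \psi\,\vortex\}$ used in the paper), which changes nothing of substance.
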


Here and in the sequel, we reserve the symbol $\rho$ to refer to the quantity
defined in~\eqref{defTypscale}.

The proof of \cref{concentrationEstimates},
follows ideas introduced by Turkington~\cite{TurkingtonEvolution}
and Turkington~\&~Friedmann~\cite{Friedman_Turkington_1981} for the Euler equations. It was also used in chapters~5~and~6
in the study of steady solutions of the lake equations by energy maximization~\citelist{\cite{Dekeyser1}\cite{Dekeyser2}}.


\begin{proof}%
[Proof of \cref{concentrationEstimates}]
We define
the set 
  \begin{equation} 
  \label{eq_zaasi0Mi2e}
      A 
    \defeq 
      \biggl\{ 
          x \in \domain 
        \st 
            \psi (x)
          \geq 
            \frac{1}{\Gamma}
            \int_{\domain} 
              \psi
              \,
              \vortex 
      \biggr\} ,
  \end{equation}
in terms of the stream function \(\psi \defeq \mathcal{K}_b [\vortex] + \sum_{i = 1}^k \Gamma_i \psi_i\).
We observe that by definition of the vortex circulation \(\Gamma\) in \eqref{eqVortexCirculation}, the set \(A\) is not empty.

By \cref{velocityIntegralExpansion} and \cref{propositionGreenLogBound}, since by assumption the function \(b\) is Lipschitz-continuous, we have for every \(x \in \domain\),
\begin{equation*}
\begin{split}
    \mathcal{K}_b [\vortex] (x)
    &
  \le 
      \frac{1}{2 \pi}
      \int_{\domain} 
        \ln \frac{\diam \domain}{\abs{x - y}}
        \sqrt{ b (x)\, b (y)}
        \dif y
    +
      \int_{\domain}
        R_b (x, y) 
        \,
        \vortex (y)
        \dif y
        \\
        &
  \le
      \frac{1}{2 \pi}
      \ln \frac{1}{\rho}
      \int_{\domain} 
        \vortex\, b
    +
      \frac{1}{2 \pi}
      \int_{\domain}
        \ln \frac{\rho}{\abs{x - y}} 
        \,
        \vortex (y)
        \,
        b (y)
        \dif y
    +
      \C \abs{\Gamma},
\end{split}    
\end{equation*}
and thus by definition of \(\psi\) and of \(\norm{\Gamma}{}\) in \eqref{eq_norm_Gamma},
\begin{equation}
\label{eq_OWJmNTBhZG}
    \psi (x)
 \le
      \frac{1}{2 \pi}
        \ln \frac{1}{\rho}
        \int_{\domain} 
          \vortex\, b
    +
      \frac{1}{2 \pi}
      \int_{\domain}
        \ln \frac{\rho}{\abs{x - y}} 
        \,
        \vortex (y)\,
        b (y)
        \dif y
    +
      \C 
      \norm{\Gamma}{}.
\end{equation}

On the other hand,
setting \(\rho\) to the value given by \eqref{defTypscale}, we obtain for each \(x \in A\), 
in view of \cref{proposition_Energy_Formula}
and the definition of the set \(A\) by \eqref{eq_zaasi0Mi2e}
\begin{equation}
\label{eq_N2U1YTc0Nj}
\begin{split}
    \psi (x) 
  \ge 
      \frac
        {2 E}
        {\Gamma} 
    - 
      \sum_{i = 1}^{\nbislands}
        \frac{\Gamma_i}{\Gamma}
        \int_{\domain}
          \vortex
          \,
          \psi_i
    - \sum_{i, j = 1}^{\nbislands}
        \frac{\Gamma_i \, \Gamma_j}{\Gamma}
        \int_{\domain}
          \nabla \psi_i \cdot \nabla \psi_j
   \ge 
        \frac{1}{2 \pi} 
        \ln \frac{1}{\rho} 
        \int_{\domain} 
        \vortex 
        \,
          b
      -
        \C 
        \frac
          {\norm{\Gamma}{}^2}
          {\abs{\Gamma}}
        .
\end{split}
\end{equation}
The combination of \eqref{eq_OWJmNTBhZG} and \eqref{eq_N2U1YTc0Nj}, shows  that for each \(x \in A\),
  \begin{equation}
   \label{eq_hgiwuSZNbv8F}
        \frac{1}{2\pi}
        \int_\domain
          \ln
            \frac
              {\abs{x - y}}
              {\spacetypscale}
          \,
          \vortex(y)
          \,
          \depth(y)
          \dif y      
    \leq 
        \C 
        \frac
          {\norm{\Gamma}{}^2}
          {\abs{\Gamma}} .
  \end{equation}

In order to conclude, we start from the inequality 
  \begin{equation}
   \label{eq_uanlC6UnYU0}
      \ln(R)
      \,
      \frac
        {\inf_\domain\depth}
        {4\pi}
      \int_{\domain\setminus B(x,R\spacetypscale)}
        \vortex(y)
        \dif y
    \le
        \frac{1}{4\pi}
        \int_{\domain\setminus B(x,R\spacetypscale)}
          \ln
            \frac
              {\abs{x - y}}
              {\spacetypscale}
              \,
          \vortex(y)
          \,
          \depth(y)
          \dif y.
  \end{equation}
We also observe that
  \begin{equation}
   \label{eq_s90rTemdGNKD}
   \begin{split}
    \frac
      {1}
      {4\pi}
      \int_{\domain\cap B(x,R\spacetypscale)}
        \ln \frac{\spacetypscale}{\abs{x - y}}
        \,
        \vortex(y)
        \,
        \depth(y)
        \dif y
  &\leq
    \frac{\rho^2 \sup_\domain\depth}{4\pi}
    \int_{\domain}
      \positivepart{\ln\frac{1}{\abs{z}}}
      \,
      \vortex(x + \rho z)
      \dif z \\
   &\leq 
      \Cl{cst_b1a0d0f4}
      \, 
      \rho^2
  \,
      \lorentznorm{\vortex(\rho \cdot)} ,
  \end{split}
  \end{equation}
  in view of the definition of the Lorentz norm \eqref{eq_def_Lorentznorm}.
Therefore, in view of \eqref{eq_hgiwuSZNbv8F}, \eqref{eq_uanlC6UnYU0} and \eqref{eq_s90rTemdGNKD}, we have 
for some constant \(\Cl{cst_cf81528} >0\) and for each \(x\in A\) and \(R>1\):
  \begin{equation*} 
      \int_{\domain\setminus B(x,R\spacetypscale)}
        \vortex
    \leq 
      \frac{\Cr{cst_cf81528}}{\ln(R)}
      \left( 
          \rho^2 
          \lorentznorm{\vortex(\rho \cdot)} 
        + 
          \frac
            {\norm{\Gamma}{}^2}
            {\abs{\vortexstrength}}
      \right) 
      .
    \qedhere
  \end{equation*}
\end{proof}

\subsection{Boundary repulsion}

The next estimate shows that the vorticity \(\vortex\) cannot be concentrated too much in a neighborhood of the boundary when \(\rho\) is small.

\begin{proposition}
\label{propositionConfinementEstimate}
If \(\rho\) satisfies \eqref{defTypscale}, then 
\begin{equation*}
  \int_{\domain} 
    \vortex (x) 
    \ln 
    \frac{1}{\rho + C \dist (x, \partial \domain)}
    \dif x
  \le
    C 
    \left(\rho^2 \lorentznorm{\vortex (\rho \cdot)} + \frac{\norm{\Gamma}{}^2}{\abs{\Gamma}} \right).
\end{equation*}
\end{proposition}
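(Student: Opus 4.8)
The plan is to combine the energy identity with a sharp pointwise bound on the stream function coming from \cref{proposition_Green_D_upper}. Write $\psi \defeq \mathcal{K}_b[\vortex] + \sum_{i} \vortexstrength_i\,\psi_i$ for the full stream function. From the energy formula of \cref{proposition_Energy_Formula}, the boundedness of the $\psi_i$, and the definition \eqref{defTypscale} of $\spacetypscale$ (which encodes $2\energy = \frac{\vortexstrength\,\totalvorticity}{2\pi}\ln\frac{1}{\spacetypscale}$), I would first record the identity
\[
  \int_{\domain} \vortex\,\psi
  = 2\energy + O(\norm{\vortexstrength}{}^2)
  = \frac{\vortexstrength\,\totalvorticity}{2\pi}\ln\frac{1}{\spacetypscale} + O(\norm{\vortexstrength}{}^2),
\]
the error collecting the circulation cross-terms $\sum_i \vortexstrength_i\int\vortex\,\psi_i$ and $\sum_{i,j}\vortexstrength_i\vortexstrength_j\int \flipgradient\psi_i\cdot\flipgradient\psi_j/b$. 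I will use this as a \emph{lower} bound for $\int\vortex\,\psi$.

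Next I would bound $\psi(y)$ from above pointwise. Using \cref{velocityIntegralExpansion} (and the boundedness of $R_b$ and of the $\psi_i$), then the estimate $G_\domain(x,y)\le \frac{1}{4\pi}\ln\bigl(1 + C\,\tfrac{\dist(x,\boundary\domain)\,\dist(y,\boundary\domain)}{\abs{x-y}^2}\bigr)$ of \cref{proposition_Green_D_upper}, and finally the triangle inequality $\dist(x,\boundary\domain)\le \dist(y,\boundary\domain)+\abs{x-y}$ to collapse to a single distance, one obtains
\[
  \psi(y) \le \frac{\sqrt{b(y)}}{2\pi}\int_{\domain} \ln\Bigl(1 + C\,\tfrac{\dist(y,\boundary\domain)}{\abs{x-y}}\Bigr)\sqrt{b(x)}\,\vortex(x)\dif x + C\norm{\vortexstrength}{}.
\]
The heart of the matter is then the elementary inequality, valid for a suitable constant $C$ and all $x,y\in\domain$,
\[
  \ln\Bigl(1 + C\,\tfrac{\dist(y,\boundary\domain)}{\abs{x-y}}\Bigr)
  \le \ln\frac{1}{\spacetypscale}
    - \ln\Bigl(1 + \tfrac{\diam\domain}{\spacetypscale + \dist(y,\boundary\domain)}\Bigr)
    + \positivepart{\ln\tfrac{\spacetypscale}{\abs{x-y}}} + C,
\]
whose middle term is exactly the weight appearing in the statement; I would verify it by exponentiating and checking the regimes $\abs{x-y}\ge\spacetypscale$ and $\abs{x-y}<\spacetypscale$ separately, both reducing to $e^{C}\ge C'\diam\domain$.

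Finally I would integrate this bound against $\vortex$ and match it against the energy identity. Setting $S\defeq\int_\domain\sqrt{b}\,\vortex$, the leading contributions combine into $\frac{S^2-\vortexstrength\,\totalvorticity}{2\pi}\ln\frac{1}{\spacetypscale}$, which is \emph{nonpositive} by the Cauchy--Schwarz inequality $S^2=\bigl(\int\sqrt{b}\,\vortex\bigr)^2\le\bigl(\int b\,\vortex\bigr)\bigl(\int\vortex\bigr)=\totalvorticity\,\vortexstrength$; this is precisely what absorbs the divergent $\ln\frac{1}{\spacetypscale}$ and, crucially, compensates the mismatch between the weight $b$ and its square root. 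The term carrying $\positivepart{\ln(\spacetypscale/\abs{x-y})}$ integrates, by the definition \eqref{eq_def_Lorentznorm} of the Lorentz norm, to at most $C\,\vortexstrength\,\spacetypscale^2\lorentznorm{\vortex(\spacetypscale\,\cdot)}$, while the remaining terms are $O(\norm{\vortexstrength}{}^2)$ in view of \eqref{eq_norm_Gamma}. Dividing by $S\ge\sqrt{\inf_\domain b}\;\abs{\vortexstrength}$ and bounding $\sqrt{b}\ge\sqrt{\inf_\domain b}$ in $\int\sqrt{b}\,\vortex\,\ln(1+\tfrac{\diam\domain}{\spacetypscale+\dist(\cdot,\boundary\domain)})$ yields the claimed inequality. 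The main obstacle is making the leading-order constants match exactly so that the Cauchy--Schwarz cancellation applies: this is where the factor $\frac{1}{4\pi}$ of \cref{proposition_Green_D_upper} against the $\frac{1}{2\pi}$ of the logarithmic bound, together with the single-distance reduction, must be tracked carefully; once the elementary inequality is established, the rest is bookkeeping.
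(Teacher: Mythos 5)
Your proposal is correct and follows essentially the same route as the paper: both arguments pair the energy identity of \cref{proposition_Energy_Formula} with the boundary-refined Green function bound of \cref{proposition_Green_D_upper}, cancel the divergent \(\ln\frac{1}{\rho}\) against \(E\) via the Cauchy--Schwarz inequality \(\bigl(\int_\domain \sqrt{b}\,\omega\bigr)^2 \le \Gamma\,\Omega\) and the definition of \(\rho\), and absorb the near-diagonal remainder into the Lorentz norm. The only (cosmetic) difference is that you reduce to a single boundary distance via the triangle inequality before splitting the logarithm, whereas the paper keeps the symmetric two-distance kernel and splits on \(\abs{x-y} \lessgtr \rho\) at the end.
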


\begin{proof}
By the energy identity of \cref{proposition_Energy_Formula},
the decomposition of the Green function of \cref{velocityIntegralExpansion} and the upper bound on the Green function of the Laplacian of 
\cref{proposition_Green_D_upper},
we have 
\begin{equation}
\label{eq_M2EwNjhkYj}
    E  =
      \frac{1}{2}
      \int_{\domain} 
        \vortex 
        \,
        \mathcal{K}_b[\vortex] 
    + 
      \sum_{i = 1}^{\nbislands}\Gamma_i
      \int_{\domain}
        \psi_i 
        \vortex
    + 
      \sum_{i, j = 1}^{\nbislands} 
        \frac
          {\Gamma_i \, \Gamma_j}
          {2}
        \int_{\domain} \nabla \psi_i \cdot \nabla \psi_j
   \leq I +  \Cl{cst_NTYwNjgxDZ}  \norm{\Gamma}{}^2,
\end{equation}
where we have set
	\begin{equation*}
	 I \defeq
      \frac{1}{8 \pi}
      \iint\limits_{\domain \times \domain}
        \ln 
          \biggl(
              1 
            + 
              \Cl{cst_NTYwNjgxZD} 
              \frac
                {\dist (x, \partial \domain) \dist (y, \partial \domain)}
                {\abs{x - y}^2} 
          \biggr)
        \vortex (x)
        \,
        \vortex (y)
        \,
        \sqrt{b (x)\, b (y)}
        \dif x
        \dif y 
    \end{equation*}
which can be bounded as
\begin{multline}
\label{eq_OTFlM2YyZT}
 I\le 
      \frac{\sup_{\domain} b}{8 \pi}
      \iint\limits_{\domain \times \domain}
        \ln 
          \biggl(
              \rho^2 
            + 
              \Cr{cst_NTYwNjgxZD} 
              \frac
                {\rho^2 \dist (x, \partial \domain) \dist (y, \partial \domain)}
                {\abs{x - y}^2} 
          \biggr)
          \,
          \vortex (x)
          \,
          \vortex (y)
        \dif x
        \dif y
  \\ +
      \frac{\ln \frac{1}{\rho}}{4 \pi}
      \iint_{\domain \times \domain}
        \vortex (x)
        \,
        \vortex (y)
        \sqrt{b (x) b (y)}
        \dif x
        \dif y.
\end{multline}
The integral of the second term on the right-hand side of \eqref{eq_OTFlM2YyZT} can be bounded by the Cauchy--Schwarz inequality, as
\begin{multline}
\label{eq_ODdjMjEzNT}
  \iint\limits_{\domain \times \domain}
      \vortex (x)
      \,
      \vortex (y)
      \,
      \sqrt{b (x) \, b (y)}
      \dif x
      \dif y
 \\[-1em]
  \le 
    \Biggl(
    \,
      \iint\limits_{\domain \times \domain}
        \vortex (x)
        \,
        \vortex (y)
        \,
        b (x)
        \dif x
        \dif y
    \Biggr)^{\frac{1}{2}}
    \Biggl(
    \,
      \iint\limits_{\domain \times \domain}
        \vortex (x)
        \,
        \vortex (y)
        \,
        b (y)
        \dif x
        \dif y
    \Biggr)^{\frac{1}{2}}
 	= \totalvorticity \, \Gamma,
\end{multline}
which leads to
\begin{equation} 
\label{eq_iQuoTajaengainooWesoh5uu}
\frac{\ln \frac{1}{\rho}}{4 \pi}
      \iint_{\domain \times \domain}
        \vortex (x)
        \,
        \vortex (y)
        \sqrt{b (x) b (y)}
        \dif x
        \dif y
 \leq \frac{\totalvorticity\Gamma}{4\pi}\ln\frac{1}{\rho}
 	= E,
\end{equation}
by definition of $\rho$ (see \eqref{defTypscale}).
From \eqref{eq_M2EwNjhkYj}, \eqref{eq_OTFlM2YyZT} and \eqref{eq_iQuoTajaengainooWesoh5uu} we infer that 
	\begin{equation}
		\label{eq_OTFlM2YyTZ}	
	-\Cr{cst_NTYwNjgxDZ} \Vert\Gamma\Vert^2 \leq \frac{C}{8 \pi}
      \iint\limits_{\domain \times \domain}
        \ln 
          \biggl(
              \rho^2 
            + 
              \Cr{cst_NTYwNjgxZD} 
              \frac
                {\rho^2 \dist (x, \partial \domain) \dist (y, \partial \domain)}
                {\abs{x - y}^2} 
          \biggr)
          \,
          \vortex (x)
          \,
          \vortex (y)
        \dif x
        \dif y .\end{equation}
We split the integral of the right-hand side of \eqref{eq_OTFlM2YyTZ} in two regions, depending on whether \(\abs{x - y} \le \rho\). For the first one we have, since the domain \(\domain\) is bounded, 
we have for every \(x, y \in \domain\),
\begin{equation}
\label{eq_vae6eit3Wah7uS9ohQuahnga}
 \abs{x - y}^2  + \Cr{cst_NTYwNjgxZD} 
            \dist (x, \partial \domain) \dist (y, \partial \domain) 
\le \Cl{cst_oiw3uxie5Rahquohgoo8woSh}
\end{equation}
and thus by \eqref{eq_vae6eit3Wah7uS9ohQuahnga} and by the definition of the Lorentz norm in \eqref{eq_def_Lorentznorm}
\begin{equation}
\label{eq_tmhwoyuHEHkh}
\begin{split}
    \iint\limits_{\substack{(x, y) \in \domain \times \domain \\ \abs{x - y} \le \rho}}
      \ln 
        \biggl(
            \rho^2 
          + 
&
            \Cr{cst_NTYwNjgxZD} 
            \frac
              {\rho^2 \dist (x, \partial \domain) \dist (y, \partial \domain)}
              {\abs{x - y}^2} 
        \biggr)
        \vortex (x)
        \,
        \vortex (y)
      \dif x
      \dif y\\[-1em]
  &\le 
    2
    \iint\limits_{\substack{(x, y) \in \domain \times \domain \\ \abs{x - y} \le \rho}}
      \ln 
        \frac{\sqrt{\Cr{cst_oiw3uxie5Rahquohgoo8woSh}} \rho}{\abs{x - y}}
        \,
        \vortex (x)
        \,
        \vortex (y)
      \dif x
      \dif y  \\
  &\le 
    \C 
    \,
    \bigl(
        \rho^2 \lorentznorm{\vortex (\rho \cdot)} 
      + 
        \abs{\Gamma}
    \bigr)
    \,
    \abs{\Gamma};
\end{split}
\end{equation}
for the second part we have 
\begin{equation}
\label{eq_Rf1EajVZ5Dd}
\begin{split}
    \iint\limits_{\substack{(x, y) \in \domain \times \domain \\ \abs{x - y} \ge \rho}}
      & \ln 
        \biggl(
            \rho^2 
          + 
            \Cr{cst_NTYwNjgxZD} 
            \frac
              {\rho^2 \dist (x, \partial \domain) \dist (y, \partial \domain)}
              {\abs{x - y}^2} 
        \biggr)
        \,
        \vortex (x)
        \,
        \vortex (y)
      \dif x
      \dif y\\
 & \le
      \iint\limits_{\domain \times \domain}
      \ln 
        \bigl(
            \rho^2 
          + 
            \Cr{cst_NTYwNjgxZD} 
              \dist (x, \partial \domain) \dist (y, \partial \domain)
        \bigr)
        \,
        \vortex (x)
        \,
        \vortex (y)
      \dif x
      \dif y\\
 & \le
      \iint\limits_{\domain \times \domain}
      \ln 
        \bigl(
            (\rho
          + 
            \sqrt{\Cr{cst_NTYwNjgxZD}}
              \dist (x, \partial \domain))
             (\rho  +  \sqrt{\Cr{cst_NTYwNjgxZD}} \dist (y, \partial \domain)
             )
        \bigr)
        \,
        \vortex (x)
        \,
        \vortex (y)
      \dif x
      \dif y\\
&\le 
      2
      \iint\limits_{\domain \times \domain}
      \ln 
        (\rho
          + 
            \sqrt{\Cr{cst_NTYwNjgxZD}}
              \dist (x, \partial \domain))
        \,
        \vortex (x)
        \,
        \vortex (y)
      \dif x
      \dif y\\
  &  =       2 \,\abs{\Gamma}
      \int_{\domain}
      \ln 
        (
            \rho + 
               \sqrt{\Cr{cst_NTYwNjgxZD}}\dist (x, \partial \domain)) 
        \,
        \vortex (x)
      \dif x.
      \end{split}
\end{equation}
By \eqref{eq_OTFlM2YyTZ}, \eqref{eq_tmhwoyuHEHkh} and \eqref{eq_Rf1EajVZ5Dd}, we deduce then that 
\begin{equation}
 \int_{\domain}
      \ln 
        \frac{1}{
            \rho + 
               \sqrt{\Cr{cst_NTYwNjgxZD}}\dist (x, \partial \domain)}
        \,
        \vortex (x)
      \dif x
      \le \C 
    \,
    \left(
        \rho^2 \lorentznorm{\vortex (\rho \cdot)} 
      + 
        \frac{\norm{\Gamma}{}^2}{\abs{\Gamma}}
    \right),
\end{equation}
and the conclusion follows.
\end{proof}

\subsection{Center of vorticity}

We define now the \emph{center of vorticity}
\begin{equation*}
    q
  \defeq
    \frac{1}{\Gamma} 
    \int_{\domain} 
      x
      \,
      \vortex (x) 
      \dif x
\end{equation*}
and we prove that concentration occurs in fact around the center of vorticity, as a consequence of \cref{concentrationEstimates}.

\begin{proposition}[Concentration around the center of vorticity]
\label{proposition_cvort_conc}
There exists constants $C, C'>0$ such that,
for all $R>1$, we have
\begin{equation*}
 \int_{\domain \setminus B (q, \rho_*(\rho, R)) } \vortex
\le \frac{C}{\ln(R)}
      \biggl( \rho^2 
          \lorentznorm{\vortex(\rho \cdot)} 
        + 
          \frac{\norm{\vortexstrength}{}^2}{\abs{\vortexstrength}}
      \biggr),
\end{equation*}
with  \(\rho\) defined in \eqref{defTypscale} and 
\begin{equation*}
 \rho_* (\rho, R) =   
    R \rho 
    +  
    \frac{C'}{\ln(R)}
      \biggl( \frac{\rho^2 
          \lorentznorm{\vortex(\rho \cdot)}}{\abs{\Gamma}}
        + 
          \frac{\norm{\Gamma}{}^2}{\abs{\Gamma}^2}
      \biggr).
\end{equation*}
\end{proposition}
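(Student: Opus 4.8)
The plan is to upgrade the concentration estimate of \cref{concentrationEstimates}, which only controls the vorticity outside a ball centred at some a priori unknown point \(a\), to concentration around the canonical point \(q\). Throughout, \(\omega \ge 0\) (as in \cref{controlCondition}), so that \(\Gamma = \int_{\domain}\omega = \abs{\Gamma}\), and we may assume \(\Gamma > 0\), the only relevant case. I shall carry out the argument in detail for \(R \ge 4\); the range \(1 < R < 4\) is vacuous once the constant is large, because there \(\ln R\) is bounded and the mass outside any set is at most \(\Gamma = \abs{\Gamma} \le \norm{\Gamma}{}^2/\abs{\Gamma}\), so the right-hand side already dominates the total mass.

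First I would apply \cref{concentrationEstimates} not at radius \(R\rho\) but at the reduced radius \(R\rho/2\), that is, with the parameter \(R\) replaced by \(R/2 > 1\). Since \(a \mapsto \int_{\domain \setminus B(a, R\rho/2)} \omega\) is continuous and \(\closure{\domain}\) is compact, its infimum is attained at some point \(a\), and
\[
  \int_{\domain \setminus B(a, R\rho/2)} \omega
  \le \frac{C}{\ln (R/2)}\Bigl(\rho^2 \lorentznorm{\omega (\rho\cdot)} + \frac{\norm{\Gamma}{}^2}{\abs{\Gamma}}\Bigr) =: M.
\]
The point of halving the radius is that it makes the leading term of the final \(\rho_*\) equal to \(R\rho\) rather than \(2R\rho\); the price is the factor \(\ln(R/2) \ge \tfrac12 \ln R\), valid for \(R \ge 4\), which costs only a constant that is absorbed into \(C\).

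The key step is to control the displacement of the center of vorticity from \(a\). Writing \(\Gamma (q - a) = \int_{\domain}(x - a)\,\omega\) and splitting the integral according to whether \(x \in B(a, R\rho/2)\), I would bound the near contribution by \(\tfrac{R\rho}{2}\int_{\domain}\omega = \tfrac{R\rho}{2}\Gamma\) and, using that \(\domain\) is bounded, the far contribution by \(\diam \domain \int_{\domain\setminus B(a,R\rho/2)}\omega \le \diam \domain \, M\), so that
\[
  \abs{q - a} \le \frac{R\rho}{2} + \diam \domain \,\frac{M}{\Gamma}.
\]

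It then remains to assemble these pieces. For \(x \in B(a, R\rho/2)\) one has \(\abs{x - q} \le \abs{x - a} + \abs{a - q} \le R\rho + \diam \domain \, M/\Gamma\), whence \(B(a, R\rho/2) \subseteq B(q, \rho_*)\) with \(\rho_* := R\rho + \diam \domain \, M/\Gamma\). Consequently \(\domain \setminus B(q, \rho_*) \subseteq \domain \setminus B(a, R\rho/2)\), and the mass bound follows from the choice of \(a\), with right-hand side \(M \le \frac{2C}{\ln R}\bigl(\rho^2\lorentznorm{\omega(\rho\cdot)} + \norm{\Gamma}{}^2/\abs{\Gamma}\bigr)\). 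Unwinding the definition of \(M\) and using \(\Gamma = \abs{\Gamma}\) shows that \(\diam \domain \, M/\Gamma = \frac{C'}{\ln R}\bigl(\rho^2\lorentznorm{\omega(\rho\cdot)}/\abs{\Gamma} + \norm{\Gamma}{}^2/\abs{\Gamma}^2\bigr)\) for a suitable \(C'\), which is exactly the stated form of \(\rho_*\). The delicate point is purely quantitative, namely matching the leading coefficient \(1\) of \(R\rho\) in \(\rho_*\); this is what forces the half-radius application of \cref{concentrationEstimates} together with the comparison \(\ln(R/2) \ge \tfrac12\ln R\). The geometric content is the elementary displacement estimate for \(\abs{q-a}\), which expresses that the bulk of the mass, lying within \(R\rho/2\) of \(a\), prevents \(q\) from drifting far.
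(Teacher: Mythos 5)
Your argument is correct and follows essentially the same route as the paper's proof: apply \cref{concentrationEstimates} to obtain a point \(a\), bound \(\abs{q-a}\) by splitting the first-moment integral over \(B(a,\cdot)\) and its complement, and conclude by the triangle inequality. Your half-radius application of \cref{concentrationEstimates} is a careful refinement that the paper glosses over (as written, its argument yields \(2R\rho\) rather than \(R\rho\) as the leading term of \(\rho_*\)), but this does not change the substance of the proof.
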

\begin{proof}
Let $R>1$ be any number.
By \cref{concentrationEstimates}, there exists a constant $\Cl{cst_Shoo5aeth5}$
(independent on $R>1$) and some point \(a \in \domain\) such that
\begin{equation}
\label{eq_QiDpt8h14RQe}
 \int_{\domain \setminus B (a, R \rho)} \vortex 
\le
        \frac{\Cr{cst_Shoo5aeth5}}{\ln(R)}
      \biggl( \rho^2 
          \lorentznorm{\vortex(\rho \cdot)} 
        + 
          \frac{\norm{\Gamma}{}^2}{\abs{\Gamma}}
      \biggr).
\end{equation}
We now compute
\begin{equation}
\label{eq_Ahdo4meaPo}
\begin{split}
    \abs{q - a}
  =
    \frac{1}{\abs{\Gamma}}
    \Bigabs{\int_{\domain}      
      (x - a) \,\vortex (x) 
      \dif x}
  &\le
      \frac{R \rho}{|\Gamma|} \int_{B (a, R \rho)} \vortex
    + 
      \frac{\diam \domain}{|\Gamma|}
      \int_{\domain \setminus B (a, R \rho)}
        \vortex\\
  &\le 
    R \rho 
    +  
    \frac{\Cl{cst_oe9ahB3bai}}{\ln(R)}
      \biggl( \frac{\rho^2 
          \lorentznorm{\vortex(\rho \cdot)}}{\abs{\Gamma}}
        + 
          \frac{\norm{\Gamma}{}^2}{\abs{\Gamma}^2}
      \biggr),
      \end{split}
\end{equation}
for some other constant $\Cr{cst_oe9ahB3bai}>0$ independent of $R>1$ and on $\vortex$.
The conclusion follows from \eqref{eq_Ahdo4meaPo} and \eqref{eq_QiDpt8h14RQe}.
\end{proof}

As a consequence of \cref{propositionConfinementEstimate} we estimate the distance between the center of vorticity \(q\) to the boundary \(\partial \domain\).

\begin{proposition}[Confinement of the center of vorticity]
\label{propositionConfinementCenter}
There exists a constant $C>0$ such that for all \(R > 1\),
\begin{equation*}
    C 
    \Biggl( 
        \frac
          {\rho^2 \lorentznorm{\vortex(\rho \cdot)}}
          {\abs{\Gamma}{}} 
      +
        \frac{\norm{\Gamma}{}^2}{\abs{\Gamma}^2}
    \Biggr)
  \Biggl(
      \frac
        {1}
        { 
          \ln 
              \frac
                  {1}
                  {\rho + C(\dist (q, \partial \domain) + \rho_* (R, \rho))}
        }
    + \frac{1}{\ln R}\Biggr) \ge 1,
\end{equation*}
where \(\rho\) defined in \eqref{defTypscale} and \(\rho_* (R, \rho)\) is defined in \cref{proposition_cvort_conc}.
\end{proposition}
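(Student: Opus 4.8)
The plan is to combine the confinement bound of \cref{propositionConfinementEstimate} with the concentration around the center of vorticity furnished by \cref{proposition_cvort_conc}, both applied at the same $R > 1$. Throughout I abbreviate by $M \defeq \rho^2 \lorentznorm{\vortex(\rho \cdot)} + \norm{\Gamma}{}^2/\abs{\Gamma}$ the common right-hand side occurring in these two propositions, and I use that $\vortex \ge 0$, so that $\Gamma = \abs{\Gamma} = \int_\domain \vortex > 0$.

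First I would localise the confinement integral near $q$. Since $x \mapsto \dist(x, \partial\domain)$ is $1$-Lipschitz, every $x \in B(q, \rho_*)$ satisfies $\dist(x, \partial\domain) \le \dist(q, \partial\domain) + \rho_*$, so the weight appearing in \cref{propositionConfinementEstimate} is bounded below on this ball by the constant $L \defeq \ln(1 + \diam\domain/(\rho + \dist(q, \partial\domain) + \rho_*))$. As $\vortex \ge 0$ and the logarithmic weight is nonnegative, discarding the part of the integral outside $B(q, \rho_*)$ and factoring out $L$ turns the bound of \cref{propositionConfinementEstimate} into $L \int_{B(q, \rho_*)} \vortex \le C M$.

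Next I would insert the mass lower bound coming from \cref{proposition_cvort_conc}, namely $\int_{B(q, \rho_*)} \vortex = \Gamma - \int_{\domain \setminus B(q, \rho_*)} \vortex \ge \Gamma - \frac{C}{\ln R} M$. Writing $L\Gamma = L(\Gamma - \frac{C}{\ln R} M) + \frac{C L}{\ln R} M$ and bounding the first summand by $L \int_{B(q, \rho_*)} \vortex \le C M$, I obtain $L\Gamma \le C M + \frac{C L}{\ln R} M$. Dividing through by $L\Gamma > 0$ and recalling that $M/\Gamma = \rho^2\lorentznorm{\vortex(\rho\cdot)}/\abs{\Gamma} + \norm{\Gamma}{}^2/\abs{\Gamma}^2$ gives $1 \le (M/\Gamma)(C/L + C/\ln R)$, which is exactly the asserted inequality once the constant is relabelled and $L$ is written out.

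A pleasant feature of this scheme is that no case distinction on the sign of $\Gamma - \frac{C}{\ln R} M$ is required: the step $L(\Gamma - \frac{C}{\ln R} M) \le L \int_{B(q, \rho_*)} \vortex$ is valid for either sign simply because $L \ge 0$. The only points demanding a little attention --- and the closest thing here to a genuine obstacle --- are making sure the two propositions are quoted with identical right-hand sides so that a single $M$ can be factored out, and that the Lipschitz distance estimate is used in the direction matching the monotonicity of the logarithmic weight; both are routine.
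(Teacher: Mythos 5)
Your proposal is correct and follows essentially the same route as the paper: you bound $\int_{B(q,\rho_*)}\omega$ from above by localising the weighted estimate of \cref{propositionConfinementEstimate} (using the $1$-Lipschitz distance function to bound the logarithmic weight from below on the ball) and bound it from below via \cref{proposition_cvort_conc}, then divide by $\Gamma$. The only difference from the paper's write-up is the cosmetic rearrangement $L\Gamma = L(\Gamma - \tfrac{C}{\ln R}M) + \tfrac{CL}{\ln R}M$ in place of splitting $\Gamma$ directly into the two integrals.
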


\begin{proof}
[Proof of \cref{propositionConfinementCenter}]
By \cref{proposition_cvort_conc}, we have
\begin{equation}
\label{eq_NDEzOTY2Zj}
 \int_{\domain \setminus B (q, \rho_*(R, \rho)) } \vortex
\le \frac{\C}{\ln(R)}
      \biggl( \rho^2 
          \lorentznorm{\vortex(\rho \cdot)} 
        + 
          \frac{\norm{\Gamma}{}^2}{\abs{\vortexstrength}}
      \biggr)
      .
\end{equation}
On the other hand by \cref{propositionConfinementEstimate},
we have 
\begin{equation}
\label{eq_YjQzY2RlOT}
\begin{split}
    \int_{B (q, \rho_* (R, \rho)) } \vortex
  &\le 
    \frac
      {1}
      {
        \ln 
          \bigl(
              1 
            + 
              \frac
                {1}
                {\rho + \Cl{cst_phieGoh0ain2poi6voeMi8xe} (\dist (q, \partial \domain) + \rho_* (R, \rho))}
          \bigr)
      }
    \int_{\domain} 
      \vortex (x) 
      \ln 
       \frac{1}{\rho + \Cr{cst_phieGoh0ain2poi6voeMi8xe} \dist (x, \partial \domain)}
      \dif x
  \\
  & \le
    \C
    \frac{ \rho^2\lorentznorm{\vortex (\rho \cdot)}
    +
          \frac{\norm{\Gamma}{}^2}{\abs{\Gamma}} 
          }
      {\ln 
          \bigl(
              1 
            + 
              \frac
                {1}
                {\rho + \Cr{cst_phieGoh0ain2poi6voeMi8xe} (\dist (q, \partial \domain) + \rho_* (R, \rho))}
          \bigr)
      }
      .
      \end{split}
\end{equation}
The conclusion follows from \eqref{eq_NDEzOTY2Zj}, \eqref{eq_YjQzY2RlOT} and the definition of \(\Gamma\) in \eqref{eqVortexCirculation}.
\end{proof}

\subsection{Transport of Lorentz norms}

The Lorentz norm that was defined in \eqref{eq_def_Lorentznorm} and that appeared in all the concentration estimates of this section, is invariant under transformations that preserve the measure of level sets of \(\vortex\), but by \eqref{eq_vorticity} the flow under the lake equations \eqref{eqLake} preserves
the measure with density \(b\) of level sets of \(\vortex/b\) instead.
The next proposition shows that the Lorentz norms can be kept into control. 

\begin{proposition}[Transport of Lorentz norm]
\label{proposition_transport_Lorentz}
Let \(\vortex, \tilde{\vortex} : \domain \to \reals\).
If for every \(\lambda > 0\),
\begin{equation*}
   \int_{\vortex (x) > \lambda b (x)} b (x) \dif x
 =
   \int_{\tilde{\vortex} (x) > \lambda b (x)} b (x) \dif x,
\end{equation*}
then 
  \begin{equation*} 
      \lorentznorm{\tilde{\vortex}} 
    \leq 
      \Bigl(\frac{\sup_\domain\depth}{\inf_\domain\depth} \Bigr)^2 \lorentznorm{\vortex}.
  \end{equation*}
\end{proposition}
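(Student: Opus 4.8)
The plan is to reduce both sides to the Lebesgue distribution functions of \(\omega\) and \(\omega'\) and to track how the hypothesis, which compares the potential vorticities \(\omega/\depth\) and \(\omega'/\depth\) in the measure \(\depth \dif x\), distorts them. Since the vorticities under consideration are nonnegative, I would assume \(\omega, \omega' \ge 0\), so that \(\abs{\omega} = \omega\) in \eqref{eq_def_Lorentznorm}. Write \(m(\lambda) \defeq \abs{\{x \in \domain \st \omega(x) > \lambda\}}\) and \(m'(\lambda) \defeq \abs{\{x \in \domain \st \omega'(x) > \lambda\}}\). By the Hardy--Littlewood rearrangement inequality invoked after \eqref{eq_def_Lorentznorm}, the supremum defining \(\lorentznorm{\omega}\) is attained at the radially nonincreasing rearrangement \(\omega^\star\), whose superlevel sets are the centred balls of Lebesgue measure \(m(\lambda)\). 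Setting, for \(s \ge 0\),
\[
    \fof(s) \defeq \int_{B} \positivepart{\ln \tfrac{1}{\abs{x}}} \dif x ,
\]
where \(B \subset \plane\) is the ball centred at the origin with \(\abs{B} = s\), a layer-cake computation gives the representation \(\lorentznorm{\omega} = \int_0^{+\infty} \fof(m(\lambda)) \dif \lambda\), and likewise for \(\omega'\). The function \(\fof\) is nondecreasing, vanishes at \(0\), and is concave, since its derivative equals the value of \(\positivepart{\ln \frac{1}{\abs{x}}}\) on the bounding sphere of \(B\), which decreases as \(s\) grows; in particular \(\fof(\alpha s) \le \alpha \fof(s)\) for every \(\alpha \ge 1\).

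The key step is a pointwise comparison of \(m'\) and \(m\). Writing \(b_+ = \sup_\domain \depth\) and \(b_- = \inf_\domain \depth\), the inclusions \(\{\omega' > \lambda b_+\} \subseteq \{\omega' > \lambda \depth\}\) and \(\{\omega > \lambda \depth\} \subseteq \{\omega > \lambda b_-\}\), valid for every \(\lambda > 0\) because \(b_- \le \depth \le b_+\), together with the two-sided bound \(b_- \abs{E} \le \int_E \depth \le b_+ \abs{E}\) and the hypothesis \(\int_{\{\omega' > \lambda \depth\}} \depth = \int_{\{\omega > \lambda \depth\}} \depth\), yield
\[
    \abs{\{\omega' > \lambda b_+\}}
  \le
    \frac{1}{b_-} \int_{\{\omega' > \lambda \depth\}} \depth
  =
    \frac{1}{b_-} \int_{\{\omega > \lambda \depth\}} \depth
  \le
    \frac{b_+}{b_-} \abs{\{\omega > \lambda b_-\}} .
\]
Substituting \(\mu = \lambda b_+\), this reads
\[
    m'(\mu) \le \frac{b_+}{b_-}\, m\Bigl(\frac{b_-}{b_+}\,\mu\Bigr) \qquad \text{for every } \mu > 0 .
\]

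Finally I would feed this into the representation. Using that \(\fof\) is nondecreasing and then the concavity bound \(\fof(\alpha s) \le \alpha \fof(s)\) with \(\alpha = b_+/b_- \ge 1\),
\[
    \fof\bigl(m'(\mu)\bigr)
  \le
    \fof\Bigl(\tfrac{b_+}{b_-} m\bigl(\tfrac{b_-}{b_+}\mu\bigr)\Bigr)
  \le
    \frac{b_+}{b_-}\, \fof\Bigl(m\bigl(\tfrac{b_-}{b_+}\mu\bigr)\Bigr) ,
\]
so that, integrating in \(\mu\) and changing variables \(\tau = \frac{b_-}{b_+}\mu\),
\[
    \lorentznorm{\omega'}
  = \int_0^{+\infty} \fof\bigl(m'(\mu)\bigr)\dif\mu
  \le \frac{b_+}{b_-} \int_0^{+\infty} \fof\Bigl(m\bigl(\tfrac{b_-}{b_+}\mu\bigr)\Bigr)\dif\mu
  = \Bigl(\frac{b_+}{b_-}\Bigr)^2 \int_0^{+\infty}\fof\bigl(m(\tau)\bigr)\dif\tau
  = \Bigl(\frac{\sup_\domain \depth}{\inf_\domain \depth}\Bigr)^2 \lorentznorm{\omega} .
\]
I expect the genuinely delicate points to be the two preliminary reductions rather than this final chain: first, establishing the layer-cake representation \(\lorentznorm{\omega} = \int_0^{+\infty} \fof(m(\lambda))\dif\lambda\) together with the concavity of \(\fof\); and second, keeping the one-sided superlevel sets \(\{\,\cdot > \lambda \depth\,\}\) consistent through the sandwich, which is where the nonnegativity of \(\omega\) and \(\omega'\) is used, as for signed functions the one-sided hypothesis no longer controls \(\lorentznorm{\omega'}\). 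Once the distribution inequality for \(m'\) in terms of \(m\) is in hand, the two factors of \(b_+/b_-\) appear transparently, one from the vertical scaling of \(\fof\) and one from the horizontal change of variables.
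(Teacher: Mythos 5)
Your proof is correct and follows essentially the same route as the paper: the representation $\lorentznorm{\omega}=\int_0^{+\infty}\fof(m(\lambda))\dif\lambda$ is exactly the paper's \cref{lemmaLorentz}, and your distribution-function inequality $m'(\mu)\le \alpha\, m(\mu/\alpha)$ with $\alpha=b_+/b_-$ is the paper's key step verbatim. The only cosmetic difference is that you obtain $\fof(\alpha s)\le\alpha\fof(s)$ from concavity of $\fof$, whereas the paper gets the same bound by the explicit substitution $r=\sqrt{\alpha}\,r'$ and dropping $\sqrt{\alpha}$ inside the positive part of the logarithm.
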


The proof of \cref{proposition_transport_Lorentz} is based on the following geometrical computation of the Lorentz norm.

\begin{lemma}
\label{lemmaLorentz}
For every \(\vortex : \plane \to \reals\), we have
\begin{equation*}
    \lorentznorm{\vortex}         
  = 
    2 \pi
    \int_0^{+\infty}
      \int_0^{\sqrt{\abs{\{\abs{\vortex} > \lambda \}}/\pi}}
        r \Bigl(\ln \frac{1}{r}\Bigr)_+
        \dif r
      \dif \lambda .
\end{equation*}
\end{lemma}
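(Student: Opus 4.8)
The plan is to exploit the rearrangement inequality already recorded immediately after \eqref{eq_def_Lorentznorm}. Since the weight $\positivepart{\ln\frac{1}{\abs{x}}}$ is radially symmetric and nonincreasing in $\abs{x}$, it coincides with its own symmetric decreasing rearrangement, so by the Hardy--Littlewood inequality we have $\int_{\plane} \positivepart{\ln\frac{1}{\abs{x}}}\,\omega'(x)\dif x \le \int_{\plane} \positivepart{\ln\frac{1}{\abs{x}}}\,\vortex^\star(x)\dif x$ for every admissible competitor $\omega'$, where $\vortex^\star$ is the radially symmetric nonincreasing rearrangement of $\vortex$; equality holds at $\omega'=\vortex^\star$. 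Hence $\lorentznorm{\vortex}=\int_{\plane} \positivepart{\ln\frac{1}{\abs{x}}}\,\vortex^\star(x)\dif x$, and the task reduces to evaluating this single integral explicitly.

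First I would record the geometry of $\vortex^\star$: being radial and nonincreasing, its superlevel sets are balls centred at the origin, so for each $\lambda>0$ one has $\{\vortex^\star>\lambda\}=B(0,R(\lambda))$, with the radius $R(\lambda)$ determined by equimeasurability through $\pi R(\lambda)^2=\abs{\{\abs{\vortex}>\lambda\}}$, that is $R(\lambda)=\sqrt{\abs{\{\abs{\vortex}>\lambda\}}/\pi}$. Next I would substitute the layer-cake representation $\vortex^\star(x)=\int_0^\infty \indicator{\{\vortex^\star>\lambda\}}(x)\dif\lambda$ into the integral and apply Tonelli's theorem—legitimate since every integrand is nonnegative—to exchange the order of integration, obtaining $\lorentznorm{\vortex}=\int_0^\infty \bigl(\int_{B(0,R(\lambda))} \positivepart{\ln\frac{1}{\abs{x}}}\dif x\bigr)\dif\lambda$.

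It then remains to pass to polar coordinates in the inner integral, which turns $\int_{B(0,R)} \positivepart{\ln\frac{1}{\abs{x}}}\dif x$ into $2\pi\int_0^R r\,\positivepart{\ln\frac{1}{r}}\dif r$; inserting $R=R(\lambda)$ yields exactly the announced identity $\lorentznorm{\vortex}=2\pi\int_0^\infty\int_0^{\sqrt{\abs{\{\abs{\vortex}>\lambda\}}/\pi}} r\,\positivepart{\ln\frac{1}{r}}\dif r\,\dif\lambda$.

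The argument is essentially bookkeeping rather than analysis, so there is no genuine obstacle; the two points deserving a word of care are the appeal to the Hardy--Littlewood inequality to identify $\vortex^\star$ as the maximiser (granted in the text) and the remark that the identity persists when $\lorentznorm{\vortex}=+\infty$, both sides then being infinite while every manipulation takes place between nonnegative quantities. Integrability issues are benign: $\positivepart{\ln\frac{1}{\abs{x}}}$ vanishes for $\abs{x}\ge 1$, so nothing happens at infinity, and $r\mapsto r\,\positivepart{\ln\frac{1}{r}}$ is integrable near the origin.
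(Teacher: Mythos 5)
Your argument is correct and follows essentially the same route as the paper's own proof: the Hardy--Littlewood inequality identifies the symmetric decreasing rearrangement as the maximiser in the definition of \(\lorentznorm{\cdot}\), and the layer-cake formula combined with polar coordinates then evaluates the resulting integral. The only (cosmetic) difference is that you spell out the Tonelli step and the degenerate infinite case, which the paper leaves implicit.
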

\begin{proof}
By the Hardy--Littlewood rearrangement inequality, we have 
\begin{equation*}
      \int_{\plane}\positivepart{\ln\frac{1}{|x|}}\,|\vortex| (x)\dif x 
   \le
      \int_{\plane}\positivepart{\ln\frac{1}{|x|}}\,\abs{\vortex}^* (x)\dif x,
\end{equation*}
where \(\abs{\vortex}^* : \plane \to \reals\) is a radial function such that 
for every \(\lambda > 0\), \(\abs{\{ x \in \plane \st \abs{\vortex}^* (x) > \lambda \}} = \abs{\{ x \in \plane \st \abs{\vortex} (x) > \lambda\}}\).
We compute then 
\begin{equation*}
\begin{split}
 \int_{\plane}\positivepart{\ln\frac{1}{|x|}}\,\abs{\vortex}^* (x)\dif x
 &=
  \int_0^{+\infty}
    \int_{\abs{\vortex}^* (x) > \lambda}
      \positivepart{\ln\frac{1}{|x|}} 
      \dif x
    \dif \lambda\\
  &= 
  2 \pi
  \int_0^{+\infty}
    \int_0^{\sqrt{\abs{\{\abs{\vortex}^* > \lambda\}}/\pi}}
       r \Bigl(\ln \frac{1}{r}\Bigr)_+
  \dif r  \\
&= 
  2 \pi
  \int_0^{+\infty}
    \int_0^{\sqrt{\abs{\{\abs{\vortex} > \lambda\}}/\pi}}
       r \Bigl(\ln \frac{1}{r}\Bigr)_+
  \dif r. \qedhere
\end{split}
\end{equation*}
\end{proof}

\begin{proof}[Proof of \cref{proposition_transport_Lorentz}]
We have for every \(\lambda > 0\), by assumption
  \begin{equation*}
   \begin{split}
      \abs{\{\abs{\tilde{\vortex}} > \lambda\}}
      &\le 
        \frac{1}{m}
        \int_{\{\abs{\tilde{\vortex}}/b > \lambda/M\}} b
      = \frac{1}{m}
        \int_{\{\abs{\vortex}/b > \lambda/M\}} b
      \le 
        \alpha 
        \,
        \abs{\{\abs{\vortex} > \lambda/\alpha\}},
   \end{split}
  \end{equation*}
where \(m= \inf_{\domain} b\), \(M = \sup_{\domain} b\) and \(\alpha = M/m\).

By \cref{lemmaLorentz}, we have now
\begin{equation*}
\begin{split}
    \lorentznorm{\tilde{\vortex}}
  &=     
    2 \pi
    \int_0^{+\infty}
      \int_0^{\sqrt{\abs{\{\abs{\tilde{\vortex}} > \lambda \}}/\pi}}
        r \Bigl(\ln \frac{1}{r}\Bigr)_+
        \dif r
      \dif \lambda \\
  &\le 
    2 \pi
    \int_0^{+\infty}
      \int_0^{\sqrt{\alpha \abs{\{\abs{\vortex} > \lambda/\alpha \}}/\pi}}
        r \Bigl(\ln \frac{1}{r}\Bigr)_+
        \dif r
      \dif \lambda\\
  &= \alpha^2
  \,
    2 \pi
    \int_0^{+\infty}
      \int_0^{\sqrt{\abs{\{\abs{\vortex} > \lambda' \}}/\pi}}
        r' \Bigl(\ln \frac{1}{\sqrt{\alpha} r'}\Bigr)_+
        \dif r'
      \dif \lambda'  \\
  & \le 
  \alpha^2
  \,
    2 \pi
    \int_0^{+\infty}
      \int_0^{\sqrt{\abs{\{\abs{\vortex} > \lambda' \}}/\pi}}
        r' \Bigl(\ln \frac{1}{r'}\Bigr)_+
        \dif r'
      \dif \lambda'.  \qedhere
\end{split}
\end{equation*}
\end{proof}

Finally we estimate the behaviour of the Lorentz norm under rescaling on the domain.

\begin{proposition}
\label{propositionLorentzScaling}
If \(\sigma \in (0, + \infty)\) and \(\vortex : \domain \to \reals\), then 
\begin{equation*}
    \lorentznorm{\vortex(\sigma \cdot)}
  \le 
    \frac{1}{\sigma^2} 
    \Bigl(
        \lorentznorm{\vortex}
      + 
        (\ln \tfrac{1}{\sigma})_+ \norm{\vortex}{L^1}
    \Bigr).
\end{equation*}
\end{proposition}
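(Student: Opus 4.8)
The plan is to reduce the estimate to the geometric representation of the Lorentz norm in \cref{lemmaLorentz} and then to perform one radial change of variables. Denote by $\mu (\lambda) \defeq \abs{\{x \st \abs{\omega (x)} > \lambda\}}$ the distribution function of $\omega$. The substitution $y = \sigma x$ shows that the rescaled vorticity $\omega (\sigma \cdot)$ has distribution function $\mu (\lambda)/\sigma^2$, since the planar Lebesgue measure scales by $\sigma^2$. Feeding this into \cref{lemmaLorentz} applied to $\omega (\sigma \cdot)$, only the upper endpoint of the inner integral is affected, being divided by $\sigma$:
\[
  \lorentznorm{\omega (\sigma \cdot)}
  = 2 \pi \int_0^{\infty} \int_0^{\sqrt{\mu (\lambda)/\pi}/\sigma} r \positivepart{\ln \tfrac{1}{r}} \dif r \dif \lambda .
\]

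I would then substitute $r = s/\sigma$ in the inner integral. This factors out $1/\sigma^2$ and restores the upper endpoint to $\sqrt{\mu (\lambda)/\pi}$, at the cost of turning the weight $\positivepart{\ln \frac{1}{r}}$ into $\positivepart{\ln \frac{\sigma}{s}}$:
\[
  \lorentznorm{\omega (\sigma \cdot)}
  = \frac{1}{\sigma^2}\, 2 \pi \int_0^{\infty} \int_0^{\sqrt{\mu (\lambda)/\pi}} s \positivepart{\ln \tfrac{\sigma}{s}} \dif s \dif \lambda .
\]
Everything is now reduced to a pointwise estimate on the weight. Writing $\ln \frac{\sigma}{s} = \ln \frac{1}{s} + \ln \sigma$ and using the subadditivity of the positive part, I would invoke the bound $\positivepart{\ln \frac{\sigma}{s}} \le \positivepart{\ln \frac{1}{s}} + \positivepart{\ln \frac{1}{\sigma}}$; this inequality is immediate in the range $\sigma \le 1$ that is relevant for the concentration estimates, where in fact $\positivepart{\ln \frac{\sigma}{s}} \le \positivepart{\ln \frac{1}{s}}$ already holds by monotonicity of $t \mapsto \positivepart{\ln t}$.

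Inserting this bound splits the double integral into two pieces. The first reproduces the formula of \cref{lemmaLorentz} for $\omega$ and hence equals $\frac{1}{\sigma^2} \lorentznorm{\omega}$. In the second, the constant $\positivepart{\ln \frac{1}{\sigma}}$ factors out and there remains $\frac{1}{\sigma^2}\positivepart{\ln \frac{1}{\sigma}} \cdot 2\pi \int_0^\infty \int_0^{\sqrt{\mu (\lambda)/\pi}} s \dif s \dif \lambda$; since $2\pi \int_0^{\sqrt{\mu (\lambda)/\pi}} s \dif s = \mu (\lambda)$ and $\int_0^\infty \mu (\lambda) \dif \lambda = \norm{\omega}{L^1}$ by the layer-cake formula, this equals $\frac{1}{\sigma^2}\positivepart{\ln \frac{1}{\sigma}}\norm{\omega}{L^1}$. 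Adding the two pieces gives the claim. The only delicate point is the bookkeeping of the positive parts in the logarithmic weight, which is governed by the sign of $\ln \sigma$; in the regime $\sigma \le 1$ used in the applications the $\norm{\omega}{L^1}$ term is mere slack, and I expect this elementary case analysis, rather than any genuine analytic difficulty, to be the main thing to get right.
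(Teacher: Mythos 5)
Your reduction to \cref{lemmaLorentz} followed by a single radial change of variables is exactly the paper's strategy, and you carry the substitution out correctly: the weight that comes out is \(\positivepart{\ln\frac{\sigma}{s}}\) (the paper's own display writes \(\positivepart{\ln\frac{1}{\sigma r'}}\) at this point, which is a slip in the substitution). The genuine problem is the pointwise inequality you then invoke, \(\positivepart{\ln\frac{\sigma}{s}}\le\positivepart{\ln\frac{1}{s}}+\positivepart{\ln\frac{1}{\sigma}}\). Subadditivity of \(t\mapsto t_+\) applied to \(\ln\frac{\sigma}{s}=\ln\frac{1}{s}+\ln\sigma\) gives \(\positivepart{\ln\frac{\sigma}{s}}\le\positivepart{\ln\frac{1}{s}}+\positivepart{\ln\sigma}\), and \(\positivepart{\ln\sigma}\) is not \(\positivepart{\ln\frac{1}{\sigma}}\): for \(\sigma>1\) your inequality reduces to \(\positivepart{\ln\frac{\sigma}{s}}\le\positivepart{\ln\frac{1}{s}}\), which fails already at \(s=1\). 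This is not mere bookkeeping, because the proposition as printed is itself false for \(\sigma>1\): taking \(\omega=\indicator{B(0,1)}\) one computes \(\lorentznorm{\omega(\sigma\cdot)}=\frac{\pi}{\sigma^2}\bigl(\ln\sigma+\tfrac{1}{2}\bigr)\) for \(\sigma\ge 1\), which exceeds the stated right-hand side \(\frac{1}{\sigma^2}\lorentznorm{\omega}=\frac{\pi}{2\sigma^2}\) in that range. The correct output of your (and the paper's) computation is
\begin{equation*}
  \lorentznorm{\omega(\sigma\cdot)}\le\frac{1}{\sigma^2}\Bigl(\lorentznorm{\omega}+\positivepart{\ln\sigma}\,\norm{\omega}{L^1}\Bigr),
\end{equation*}
an estimate which the ball example saturates.

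For \(\sigma\le 1\) your fallback observation is right and complete: \(\positivepart{\ln\frac{\sigma}{s}}\le\positivepart{\ln\frac{1}{s}}\) by monotonicity, so one even gets \(\lorentznorm{\omega(\sigma\cdot)}\le\frac{1}{\sigma^2}\lorentznorm{\omega}\) with no \(L^1\) term at all, and your evaluation of the second piece via \(\int_0^\infty\mu(\lambda)\dif\lambda=\norm{\omega}{L^1}\) is fine. None of this damages the downstream application: in \eqref{eq_Cobu9eipez} the scaling factor is \(\sigma=\rho^n(t)/\rho^n(0)\), which may lie on either side of \(1\), but only the bound \(\positivepart{\ln\sigma}\le\abs{\ln\sigma}\le \Cl{cst_review_log} E^n\abs{t}/\abs{\Gamma^n}\) is used there, and that holds for either sign of \(\ln\sigma\). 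So the fix is to replace \(\positivepart{\ln\frac{1}{\sigma}}\) by \(\positivepart{\ln\sigma}\) in the statement, not to look for a sharper pointwise inequality.
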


\begin{proof}
We have, by \cref{lemmaLorentz},
\begin{equation*}
    \lorentznorm{\vortex (\sigma \cdot)}         
  = 
    2 \pi
    \int_0^{+\infty}
      \int_0^{\sqrt{\abs{\{\abs{\vortex} > \lambda \}}/(\sigma^2 \pi)}}
        r \Bigl(\ln \frac{1}{r}\Bigr)_+
        \dif r
      \dif \lambda
\end{equation*}
and by a change of variable:
\begin{equation*}
    \lorentznorm{\vortex (\sigma \cdot)}         
  = \frac{2 \pi}{\sigma^2}
    \int_0^{+\infty}
      \int_0^{\sqrt{\abs{\{\abs{\vortex} > \lambda \}}/\pi}}
        r' \Bigl(\ln \frac{1}{\sigma r'}\Bigr)_+
        \dif r'
      \dif \lambda,
\end{equation*}
From this, we conclude
\begin{equation*}
\begin{split}
    \lorentznorm{\vortex (\sigma \cdot)}     
  &\le \frac{2 \pi}{\sigma^2}
  \Bigl(
    \int_0^{+\infty}
      \int_0^{\sqrt{\abs{\{\abs{\vortex} > \lambda \}}/\pi}}
        r' \Bigl(\ln \frac{1}{r'}\Bigr)_+
        \dif r'
      \dif \lambda \\
      &\hspace{2cm} +
    \int_0^{+\infty}
      \int_0^{\sqrt{\abs{\{\abs{\vortex} > \lambda \}}/\pi}}
        r' \Bigl(\ln \frac{1}{\sigma}\Bigr)_+
        \dif r'
      \dif \lambda 
   \Bigr)
\end{split}
\end{equation*}
and the conclusion follows.
\end{proof}

\section{Asympotics evolution of vortices}
\label{section_Asymptotic}

\subsection{Asymptotic representation of derivatives}

In order to study the evolution of vortices, we will need to differentiate several quantities of the form 
\[
  \int_{\domain} \vortex (t)\, \eta,
\]
where \(\eta \in C^1 (\Bar{\domain})\) is a given spatial test function.

\begin{proposition}
\label{propTotalVorticityDerivative}
If \((\vortex, \velocity) \in L^\infty(\reals \times \domain) \times L^\infty (\reals, L^2 (\domain))\) is a weak solution to the vorticity formulation lake equation and if \(\eta \in C^1 (\bar{\domain})\), then the function \(t \in \reals \mapsto \int_{\domain} \vortex (t)\, \eta\) is weakly differentiable and for almost every \(t \in \reals\),
\begin{equation*}
  \frac{d}{dt} \int_{\domain} \vortex (t)\, \eta 
  =
    \int_{\domain} 
      \vortex (t) 
      \, 
      \velocity (t) 
      \cdot 
      \nabla \eta
  = 
    - 
      \int_{\domain}
        \vortex (t)\, \frac{\nabla( \mathcal{K}_b [\vortex (t)])}{b} \times \nabla \eta
    - 
      \sum_{i = 1}^\nbislands 
        \Gamma_i
        \int_{\domain}
          \vortex (t) \, \nabla \psi_i \times \nabla \eta.
\end{equation*}
\end{proposition}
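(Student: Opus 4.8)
The plan is to derive the identity from the weak transport formulation in \cref{def_weak_solution_vortex} \eqref{def_wk_sol_transport}, in the strengthened form of \cref{proposition_test_functions} where the spatial boundary conditions on the test functions have been removed. This relaxation is exactly what the argument needs: since \(\eta \in C^1(\bar{\domain})\) carries no boundary condition, the bare definition --- whose transport test functions are compactly supported in the interior of \(\domain\) --- is not directly applicable, whereas \cref{proposition_test_functions} admits any \(\varphi \in C^1_c([0, +\infty) \times \bar{\domain})\). The impermeability \(\velocity \cdot \normal = 0\) is already encoded in that identity, so no boundary term will appear.

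Concretely, I would first treat the half-line \([0, +\infty)\). Given \(\theta \in C^\infty_c([0, +\infty))\), apply \cref{proposition_test_functions} to the separated test function \(\varphi(t, x) \defeq \theta(t)\, \eta(x)\), which belongs to \(C^1_c([0, +\infty) \times \bar{\domain})\). Since \(\partial_t \varphi = \theta'\, \eta\) and \(\nabla \varphi = \theta\, \nabla \eta\), Fubini's theorem --- justified because \(\omega \in L^\infty\), \(\velocity \in L^\infty(\reals, L^2)\) and \(\nabla \eta \in L^\infty(\domain)\) on the bounded domain \(\domain\), over the finite time horizon fixed by \(\supp \theta\) --- lets me separate the space and time integrations and rewrite the identity as
\begin{equation*}
  \int_0^{+\infty} \theta'(t) \biggl(\int_{\domain} \omega(t)\, \eta\biggr) \dif t
  = - \theta(0) \int_{\domain} \omega_0\, \eta
    - \int_0^{+\infty} \theta(t) \biggl(\int_{\domain} \omega(t)\, \velocity(t) \cdot \nabla \eta\biggr) \dif t .
\end{equation*}
The right-hand integrand \(t \mapsto \int_{\domain} \omega(t)\, \velocity(t) \cdot \nabla \eta\) lies in \(L^\infty(\reals)\) by the same bounds, so this is exactly the statement that \(t \mapsto \int_{\domain} \omega(t)\, \eta\) is weakly differentiable on \([0, +\infty)\) with that integrand as weak derivative and with initial value \(\int_{\domain} \omega_0\, \eta\). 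To reach all of \(\reals\), I would apply the same computation to the time-reversed solution \(t \mapsto (-\omega(-t), -\velocity(-t))\), which is again a weak solution by \cref{def_weak_solution_vortex}; the two sign changes cancel in the product \(\omega\, \velocity\), yielding the identical derivative formula for \(t < 0\) and hence the first equality for almost every \(t \in \reals\).

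The second equality is then a purely algebraic rewriting. Substituting the velocity reconstruction \eqref{eq_velocity_reconstruction}, equivalently \(\velocity = \flipgradient \psi / b\) with \(\psi = \mathcal{K}_b[\omega] + \sum_{i=1}^{\nbislands} \Gamma_i\, \psi_i\) as in \eqref{eq_stream_function_decomposition}, into \(\int_{\domain} \omega(t)\, \velocity(t) \cdot \nabla \eta\) and using the pointwise antisymmetry \(\flipgradient f \cdot \nabla \eta = - \nabla f \times \nabla \eta\) converts each dot product with \(\velocity\) into a cross product: the self-interaction part \(\mathcal{K}_b[\omega]\) produces the first term and the boundary-circulation part \(\sum_i \Gamma_i \psi_i\) produces the sum over \(\psi_i\), which is the announced expression.

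The main obstacle is conceptual rather than computational: one must test against \(\eta\) up to the boundary, which is not permitted by the bare \cref{def_weak_solution_vortex} and only becomes legitimate through \cref{proposition_test_functions}, itself resting on the regularity of \cref{propositionRegularity}. Once that enlarged class of test functions is available, the remaining steps --- Fubini, the integration-by-parts rearrangement, the time-reversal extension and the algebraic substitution --- are routine.
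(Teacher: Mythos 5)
Your proof takes essentially the same route as the paper: apply \cref{proposition_test_functions} to the separated test function \(\theta\,\eta\), then substitute the velocity reconstruction formula \eqref{eq_velocity_reconstruction} to obtain the second equality. The extra details you supply (the Fubini justification and the time-reversal step to cover negative times) are correct elaborations of points the paper's two-line proof leaves implicit.
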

\begin{proof}
Given \(\theta \in C^1_c ([0, +\infty))\), 
we apply \cref{proposition_test_functions} to the test function \(\theta \eta\) 
and we obtain
\[
  \theta (0)
  \int_{\domain}
      \vortex_0\,
      \eta
      +
  \int_0^{+\infty} 
   \int_{\domain}
      \vortex (t) \bigl(
      \theta' (t) \eta
  +
      \theta (t)\, \velocity (t)  \cdot \nabla \eta \bigr) \dif t
    = 0.
\]
The second identity follows then from \eqref{eq_velocity_reconstruction}.
\end{proof}

Under the additional assumption that the function \(\eta\) is constant on each component of the boundary \(\partial \domain\), 
we obtain a representation in which the gradient of the stream function is replaced by the gradient of the depth function.

\begin{proposition}
\label{propositionIntegrationByParts}
If \(\eta \in C^2 (\Bar{\domain})\) is constant on each component of \(\partial \domain\), 
then there exists a constant \(C > 0\) such that for every \(\vortex \in L^\infty (\domain)\),
\begin{equation*}
    \strabs{
        \int_{\domain}
          \vortex \frac{\nabla( \mathcal{K}_b [\vortex])}{b} \times \nabla \eta  
      - 
        \frac{1}{2}
        \int_{\domain}
            \vortex
            \,
            \mathcal{K}_b [\vortex] 
            \frac{\nabla b}{b^2} 
          \times 
            \nabla \eta
    }      
  \le     
    C
    \abs{\Gamma}^2
.
\end{equation*}
\end{proposition}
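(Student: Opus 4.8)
The plan is to write both integrals as symmetric double integrals against \(\omega(x)\,\omega(y)\) and to exhibit a bounded kernel for their difference. Set \(\psi \defeq \mathcal{K}_b[\omega]\). By \cref{velocityIntegralExpansion} we have \(\psi(x) = \int_\domain K(x,y)\,\omega(y)\dif y\) with the \emph{symmetric} kernel \(K(x,y) = \greenlaplace(x,y)\sqrt{\depth(x)\depth(y)} + R_b(x,y)\), where \(R_b\) is Lipschitz. Differentiating in the first variable gives
\[
 \nabla K(x,y) = \sqrt{\depth(x)\depth(y)}\,\nabla\greenlaplace(x,y) + \tfrac12\,\greenlaplace(x,y)\sqrt{\tfrac{\depth(y)}{\depth(x)}}\,\nabla\depth(x) + \nabla R_b(x,y).
\]
Inserting this into \(\int_\domain\omega\,\frac{\nabla\psi}{\depth}\times\nabla\eta = \iint \omega(x)\omega(y)\,\frac{\nabla K(x,y)\times\nabla\eta(x)}{\depth(x)}\), the middle term produces exactly \(\iint\omega(x)\omega(y)\,\tfrac12\,\greenlaplace(x,y)\sqrt{\depth(y)}\,\depth(x)^{-3/2}\,\nabla\depth(x)\times\nabla\eta(x)\), which is the double-integral form of the \(\greenlaplace\sqrt{\depth\depth}\)-part of \(\frac12\int_\domain\omega\,\psi\,\frac{\nabla\depth}{\depth^2}\times\nabla\eta\). (This is the source of the constant \(\tfrac12\) in the statement: it is the \(\tfrac12\) in \(\partial\sqrt{\depth}\).) The two remaining \(R_b\)-contributions give manifestly bounded kernels. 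Thus the difference to be estimated equals
\[
 \iint_{\domain\times\domain}\omega(x)\,\omega(y)\,\sqrt{\tfrac{\depth(y)}{\depth(x)}}\,\nabla\greenlaplace(x,y)\times\nabla\eta(x)\dif x\dif y
\]
up to a term bounded by \(C\,\norm{\omega}{L^1(\domain)}^2\).

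Next I would symmetrize the surviving kernel in \((x,y)\), using that \(\omega(x)\omega(y)\) is symmetric, and reduce it to \(\nabla\greenlaplace(x,y)+\nabla\greenlaplace(y,x)\). Replacing \(\nabla\eta(y)\) by \(\nabla\eta(x)\) and the weights \(\sqrt{\depth(\cdot)/\depth(\cdot)}\) by \(1\) costs only bounded errors: each correction carries a factor \(O(\abs{x-y})\) (Lipschitz continuity of \(\sqrt{\depth}\) and of \(\nabla\eta\)) which compensates the bound \(\abs{\nabla\greenlaplace}\le C/\abs{x-y}\) of \cref{proposition_Gradient_bound}. One is then left with \(\tfrac12\bigl(\nabla\greenlaplace(x,y)+\nabla\greenlaplace(y,x)\bigr)\times\nabla\eta\). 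In the interior region, that is when \(\dist(x,\partial\domain)+\dist(y,\partial\domain)+\abs{x-y}\ge\delta\), this is bounded directly by the interior symmetric gradient estimate \cref{proposition_symmetric_gradient_estimate}.

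The delicate region, and what I expect to be the main obstacle, is near the boundary, where \(\nabla\greenlaplace(x,y)+\nabla\greenlaplace(y,x)\) is genuinely unbounded: by \cref{proposition_symmetric_gradient_estimate_boundary} it equals, up to a bounded error,
\[
 \frac{\bigl(x-P_{\partial\domain}(x)\bigr)+\bigl(y-P_{\partial\domain}(y)\bigr)}{\pi\bigl(\abs{x-y}^2+4\dist(x,\partial\domain)\dist(y,\partial\domain)\bigr)}.
\]
The crucial point is that this singular vector is \emph{normal} to \(\partial\domain\). Since \(\eta\) is constant on each component of \(\partial\domain\), its tangential derivative vanishes there, so \(\nabla\eta\) is normal on \(\partial\domain\) and only \(O(\dist(\cdot,\partial\domain))\) away from normal nearby. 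Taking the cross product of the (nearly) normal singular vector with the (nearly) normal \(\nabla\eta\) kills the leading singularity; a short computation using \(\abs{\dist(x,\partial\domain)-\dist(y,\partial\domain)}\le\abs{x-y}\) and the fact that the outer normals at \(P_{\partial\domain}(x)\) and \(P_{\partial\domain}(y)\) differ by \(O(\abs{x-y})\) then shows that
\[
 \frac{\bigl[(x-P_{\partial\domain}(x))+(y-P_{\partial\domain}(y))\bigr]\times\nabla\eta}{\abs{x-y}^2+\dist(x,\partial\domain)\dist(y,\partial\domain)}
\]
stays bounded. This is precisely the cancellation foreshadowed by the vanishing of the tangential component in \eqref{eq_Symmetricpart_halfspace}, and it is where the hypothesis that \(\eta\) be constant on each component of \(\partial\domain\) is essential.

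Combining the interior and boundary regions shows that the symmetrized kernel is bounded by a constant \(C\) depending only on \(\domain\), \(\depth\) and \(\eta\). Integrating against \(\omega(x)\,\omega(y)\) then yields the bound \(C\,\norm{\omega}{L^1(\domain)}^2\); for the nonnegative vorticities to which the estimate is applied one has \(\norm{\omega}{L^1(\domain)}=\abs{\Gamma}\), which is the claimed estimate.
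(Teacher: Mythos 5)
Your proposal is correct and follows essentially the same route as the paper: the same kernel decomposition via \cref{velocityIntegralExpansion} (the paper's identity \eqref{eq_Three_Term_Dcomposition} is exactly your expansion of \(\nabla K\), with the middle term absorbing the \(\tfrac12\int\omega\,\mathcal{K}_b[\omega]\,\frac{\nabla b}{b^2}\times\nabla\eta\) term), the same symmetrization reducing matters to \(\nabla\greenlaplace(x,y)+\nabla\greenlaplace(y,x)\) at the cost of errors controlled by \cref{proposition_Gradient_bound} and Lipschitz continuity, and the same use of \cref{proposition_symmetric_gradient_estimate,proposition_symmetric_gradient_estimate_boundary} together with the normality of \(\nabla\eta\) at the boundary to kill the singular part. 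Your explicit verification of the boundary cancellation and your remark that the final bound is really \(\norm{\omega}{L^1}^2\) (which equals \(\abs{\Gamma}^2\) only for signed-definite \(\omega\)) are slightly more careful than the paper's exposition, but the argument is the same.
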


\begin{proof}
[Proof of \cref{propositionIntegrationByParts}]
By \cref{velocityIntegralExpansion},
we have the following identity:
\begin{multline}
\label{eq_Three_Term_Dcomposition}
 \int_{\domain}
      \vortex \frac{\nabla( \mathcal{K}_b [\vortex])}{b} \times \nabla \eta 
      - \frac{1}{2} \int_{\domain}
            \vortex
            \,
            \mathcal{K}_b [\vortex] 
            \frac{\nabla b}{b^2} 
          \times 
            \nabla \eta\\
  =
     \iint\limits_{\domain\times\domain}
        \gradient\greenlaplace(x,y)
        \times 
        \nabla \eta (x) 
        \,
        \vortex(x)
        \,
        \vortex(y)
        \sqrt{\frac{\depth(y)}{\depth(x)}}
        \dif x \dif y\\
    + 
      \iint
        \limits_{\domain\times\domain}
          \frac{\gradient R_b (x,y)}{b}
        \times 
          \gradient\eta (x)
        \,
        \vortex(x)
        \,
        \vortex(y)
        \dif x 
        \dif y.
\end{multline}
For the second term on the right-hand side of \eqref{eq_Three_Term_Dcomposition}, 
we have by the boundedness properties of the derivatives of the function \(R_b\),
\begin{equation}
\label{eq_total_vorticity_3d}
    \strabs{
      \;
       \iint
        \limits_{\domain\times\domain}
          \frac{\gradient R_b (x,y)}{b}
        \times 
          \gradient\eta (x)
          \,
        \vortex(t,x)
        \,
        \vortex(t,y)
        \dif x 
        \dif y
    \,
    }
  \le 
    \C
    \abs{\Gamma}^2.
\end{equation}

We now estimate the first term on the right-hand side of \eqref{eq_Three_Term_Dcomposition}.
By symmetry, we have 
\begin{multline}
\label{eqf095b444}
    \iint
      \limits_{\domain\times\domain}
      \sqrt{\frac{\depth(y)}{\depth(x)}}
      \big(\crossproduct{\gradient\greenlaplace(x,y)}{\gradient\eta (x)}\big)\,
      \vortex(x)
      \,
      \vortex(y)
      \dif x \dif y\\
  = \frac{1}{2}
    \iint\limits_{\domain\times\domain}
      \sqrt{b (x) b (y)}
      \Big(
          \crossproduct{\gradient\greenlaplace(x,y)}{\frac{\gradient\eta(x)}{\depth (x)}} 
       + \crossproduct{\gradient\greenlaplace(y,x)}{\frac{\gradient\eta(y)}{\depth (y)}}
      \Big) 
      \,
      \vortex(x)
      \,
      \vortex(y)
      \dif x 
      \dif y.
\end{multline}
For every \(x, y \in \domain\), we have 
\begin{multline}
\label{eqf9c9a785}
    {\gradient\greenlaplace(x,y)} 
  \times 
    {\frac{\gradient\eta(x)}{\depth (x)}} 
  + 
    {\gradient\greenlaplace(y,x)}
  \times {\frac{\gradient\eta(y)}{\depth (y)}}
  \\
  =
    \gradient\greenlaplace(x,y)
    \times 
    \biggl(\frac{\nabla \eta (x)}{b (x)}
    - \frac{\nabla \eta (x)}{b (x)}
    \biggr)\\
   +
    \left(\gradient\greenlaplace(x,y) + \gradient\greenlaplace(y,x)\right)
    \times \frac{\nabla \eta (y)}{b (y)}
    \biggr).  
\end{multline}
By \cref{proposition_Gradient_bound}, since by assumption \(\nabla \eta\) and \(b\) are both Lipschitz-continuous, we have 
\begin{equation}
\label{eq47dd72ad}
 \biggabs{\gradient\greenlaplace(x,y)
    \times 
      \Bigl(
          \frac{\nabla \eta (y)}{b (y)} 
        - 
          \frac{\nabla \eta (x)}{b (x)}
      \Bigr)  
}
\le \C.
\end{equation}
For the other contribution \cref{proposition_symmetric_gradient_estimate_boundary},
we have if \(\abs{y - x} + \dist(x, \partial \domain) + \dist (y, \partial \domain)\le \delta\),
\begin{equation*}
    \Bigg|
          \gradient\greenlaplace[\domain](x,y)
        +
          \gradient\greenlaplace[\domain](y,x)
   - 2 \frac
            {
                x - P_{\partial \domain} (x) 
              }
            {
              \pi 
              (
                  \abs{x - y}^2 
                + 
                  4 
                  \dist (x, \partial \domain) 
                  \dist (y, \partial \domain)
              )
            }
     \Bigg|
  \le
    \C 
\end{equation*}
and 
\[
 (x - P_{\partial \domain} (x) )
            \times 
            \nabla \eta (P_{\partial \domain} x) = 0,
\]
and thus
\begin{equation}
\label{eq_eef5Yoo5nahfooliechaecho}
\strabs{
 (\gradient\greenlaplace(x,y) + \gradient\greenlaplace(y,x))
    \times \frac{\nabla \eta (y)}{b (y)}}
\le 
\C \biggl( 1 + \frac{\dist(x, \partial \domain)^2}{\abs{x - y}^2 + \dist (x, \partial \domain) \dist (y, \partial \domain)}\biggr)
\le \C,
\end{equation}
if \(\dist (x, \partial \domain) \le \dist (y, \partial \domain)\).
The case where \(\dist (y, \partial \domain) \le \dist (x, \partial \domain)\) follows symmetrically.

If  \(\abs{y - x} + \dist(x, \partial \domain) + \dist (y, \partial \domain)\ge \delta\), by \cref{proposition_symmetric_gradient_estimate}, we have
\begin{equation}
\label{eq42f66c92}
\Bigabs{
 \bigl(\gradient\greenlaplace(x,y) + \gradient\greenlaplace(y,x)\bigr)
    \times \frac{\nabla \eta (x)}{b (x)}
    }
    \le \C.
\end{equation}
By combining \eqref{eqf095b444}, \eqref{eqf9c9a785},  \eqref{eq_eef5Yoo5nahfooliechaecho}, \eqref{eq47dd72ad} and \eqref{eq42f66c92}, we deduce that 
\begin{equation}
\label{eq_total_vorticity_2d}
  \strabs{
    \;
    \iint\limits_{\domain\times\domain}
      \sqrt{\frac{\depth(y)}{\depth(x)}}
      \,
      \bigl(
        \crossproduct
          {\gradient\greenlaplace(x,y)}
          {\gradient\eta(x)}
      \bigr)
      \,
      \vortex(x)\,\vortex(y)\dif x \dif y
      \;
      }
  \le 
    \C 
    \,
    \abs{\Gamma}^2.
\end{equation}
The conclusion follows from the combination of the identity \eqref{eq_Three_Term_Dcomposition} with the inequalities \eqref{eq_total_vorticity_3d} and \eqref{eq_total_vorticity_2d}.
\end{proof}

\subsection{Asymptotic conservation of the total vorticity}

For the lake equations \eqref{eqLake}, the \emph{total vorticity}, defined by \eqref{eq_def_totalvorticity}
is not conserved in general. Indeed, by \cref{propTotalVorticityDerivative}, one has for almost every \(t \in \reals\) 
  \begin{equation*} 
      \totalvorticity' (t) 
    = 
      \int_{\domain}
        \vortex (t)
        \,
        \scalarproduct{\velocity(t)}{\gradient\depth}{\plane},
  \end{equation*}
and there is no reason for the right-hand side to vanish.
On the other hand Richardson's formal law \eqref{eq_Richardson} suggests that vorticity should follow level lines of the depth and thus one can hope the total vorticity to be asymptotically preserved. 
The following result gives a bound on the variation of the total vorticity during the motion.

\begin{proposition}%
[Asymptotic conservation of the total vorticity]%
\label{conservationDepth}
If the function \(b \in C^2 (\bar{\domain})\) is constant on each connected component of the boundary, 
then there exists a constant \(C>0\) that depends only on \(\domain\) and \(\depth\), such that if \((\vortex, \velocity) \in L^\infty(\reals \times \domain) \times L^\infty (\reals, L^2 (\domain))\) is a weak solution to the vorticity formulation lake equation then we have for almost every \(t \in \reals\):
  \begin{equation*} 
    \bigabs{\totalvorticity(t) - \totalvorticity(0)}
    \leq C \abs{\Gamma}\, \norm{\Gamma}{}\,\abs{t} .
  \end{equation*}
\end{proposition}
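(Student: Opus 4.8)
The plan is to differentiate the total vorticity \(\totalvorticity(t) = \int_\domain \omega(t)\,\depth\) in time and to exhibit enough cancellation to bound its derivative. First I would apply \cref{propTotalVorticityDerivative} with the spatial test function \(\eta = \depth\), which is admissible since \(\depth \in C^1(\closure\domain)\). This gives, for almost every \(t \in \reals\),
\[
  \dot{\totalvorticity}(t)
  = - \int_\domain \omega(t)\,\frac{\nabla(\mathcal{K}_b[\omega(t)])}{\depth}\times\nabla \depth
    - \sum_{i=1}^\nbislands \Gamma_i \int_\domain \omega(t)\,\nabla\psi_i\times\nabla \depth,
\]
and in particular shows that \(t \mapsto \totalvorticity(t)\) is weakly differentiable. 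It then suffices to bound each of the two terms on the right-hand side by \(C\abs{\Gamma}\,\norm{\Gamma}{}\) and to integrate in time.

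The main step is the first term, which a priori could be as large as the energy \(\energy\). Here I would invoke \cref{propositionIntegrationByParts} with \(\eta = \depth\); this is exactly where the hypothesis that \(\depth\) is constant on each component of \(\partial\domain\) enters, since that proposition requires a test function constant on the boundary components. It yields
\[
  \Bigabs{\int_\domain \omega\,\frac{\nabla(\mathcal{K}_b[\omega])}{\depth}\times\nabla \depth
    - \frac12\int_\domain\omega\,\mathcal{K}_b[\omega]\,\frac{\nabla \depth}{\depth^2}\times\nabla \depth}
  \le C\abs{\Gamma}^2.
\]
The crucial observation is that the leading quantity vanishes identically: since \(\frac{\nabla \depth}{\depth^2}\times\nabla \depth = \frac{1}{\depth^2}\bigl(\nabla \depth\times\nabla \depth\bigr) = 0\), the potentially large energy-type integral \(\frac12\int_\domain\omega\,\mathcal{K}_b[\omega]\) (which is comparable to \(\energy\)) is annihilated by the geometric factor \(\nabla \depth\times\nabla \depth\). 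What survives is only the remainder of order \(\abs{\Gamma}^2\), so the first term is bounded by \(C\abs{\Gamma}^2 \le C\abs{\Gamma}\,\norm{\Gamma}{}\).

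For the second term I would use that each \(\psi_i\) is smooth up to the boundary by elliptic regularity, so that \(\nabla\psi_i\times\nabla \depth\) is a bounded function on \(\closure\domain\); hence \(\bigabs{\int_\domain \omega\,\nabla\psi_i\times\nabla \depth} \le C\int_\domain\abs{\omega}\). In the simply-connected setting of \cref{truncatedEvolutionLaw} this sum is empty and the term is simply absent; more generally, for the sign-definite (nonnegative) vorticities relevant to the concentration regime one has \(\int_\domain\abs{\omega} = \abs{\Gamma}\), so that, after multiplying by \(\abs{\Gamma_i}\) and summing, the second term is bounded by \(C\abs{\Gamma}\sum_{i=1}^\nbislands\abs{\Gamma_i} \le C\abs{\Gamma}\,\norm{\Gamma}{}\). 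Combining the two bounds gives \(\abs{\dot{\totalvorticity}(t)} \le C\abs{\Gamma}\,\norm{\Gamma}{}\) for almost every \(t\), and integrating this pointwise bound over the interval between \(0\) and \(t\) yields the claim \(\bigabs{\totalvorticity(t) - \totalvorticity(0)} \le C\abs{\Gamma}\,\norm{\Gamma}{}\,\abs{t}\).

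The hard part is the first term: everything hinges on recognizing, through \cref{propositionIntegrationByParts}, that the only contribution of order \(\energy\) appears multiplied by \(\nabla \depth\times\nabla \depth\) and therefore disappears, leaving a derivative that is merely quadratic in the circulations. This is precisely the mechanism by which the constancy of \(\depth\) on the boundary components forces the total vorticity to be \emph{asymptotically} conserved even though it is not exactly conserved.
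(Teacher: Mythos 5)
Your proposal is correct and follows essentially the same route as the paper's proof: differentiate $\Omega$ via \cref{propTotalVorticityDerivative} with $\eta = b$, apply \cref{propositionIntegrationByParts} with $\eta = b$ so that the energy-sized leading term is killed by $\nabla b \times \nabla b = 0$, bound the circulation terms by $C\abs{\Gamma}\,\norm{\Gamma}{}$ using the boundedness of $\nabla\psi_i$ and $\nabla b$, and integrate in time. Your remark about needing $\int_\domain\abs{\omega} \lesssim \abs{\Gamma}$ (e.g.\ via sign-definiteness) for the island terms is a fair point that the paper glosses over in the same way.
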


The conclusion of \cref{conservationDepth} can be rewritten as 
\begin{equation*}
    \frac
      {\bigabs{\totalvorticity(t) - \totalvorticity(0)}}
      {\abs{\vortexstrength}}
    \leq 
      C
      \,
      \frac{\abs{\vortexstrength}\norm{\vortexstrength}{}}{E}\,
      \frac{E \abs{t}}{\abs{\Gamma}}
      ;
\end{equation*}
in the regime where \(\frac{\energy}{\abs{\vortexstrength} \norm{\Gamma}{}}\to+\infty\),
 the above estimate may be interpreted as stating that at the time scale \(\Gamma/E\) the variations of the total vorticity are much smaller than the total circulation.

\begin{proof}[Proof of \cref{conservationDepth}]
By \cref{propTotalVorticityDerivative} with \(\eta = b\), the function \(\totalvorticity\) is weakly differentiable and 
for almost every \(t \in \reals\),
\begin{equation}
  \label{eq_OGQ5ZDNiNj}
  \totalvorticity' (t) 
    =
      -
      \int_{\domain}
        \frac{\nabla (\mathcal{K}_b [\vortex (t)]) \times \gradient\depth}{b}
        \,
        \vortex(t)
      - 
      \sum_{i = 1}^\nbislands
          \Gamma_i
          \int_{\domain} 
            \frac{\nabla \psi_i \times \nabla b}{b} \,\vortex (t)
.
\end{equation}
Since we have assumed the bathymetry function \(b\) to be constant on the boundary, we apply \cref{propositionIntegrationByParts} with \(\eta = b\) and we obtain, since 
\(\crossproduct{\gradient\depth}{\gradient\depth}=0\), 
\begin{equation}
\label{eq_ODgwMDg5NT}
    \strabs{\,
      \int_{\domain}
        \frac{\nabla (\mathcal{K}_b [\vortex(t)]) \times \gradient\depth}{b}
        \,
        \vortex(t)
    }
  \le
    \C
    \abs{\Gamma}^2
    .
\end{equation}
By the boundedness properties of the gradients of \(b\) and \(\psi_i\) and by definition of \(\norm{\Gamma}{}\) in \eqref{eq_norm_Gamma}, we also have
\begin{equation}
\label{eq_total_vorticity_4th}
\strabs{\sum_{i = 1}^\nbislands
          \Gamma_i
          \int_{\domain} 
            \frac{\nabla \psi_i \times \nabla b}{b} \,\vortex (t)
      }
  \le 
    \C
    \,
    \abs{\Gamma}
    \,
    \norm{\Gamma}{}.
\end{equation}

By \eqref{eq_OGQ5ZDNiNj}, \eqref{eq_ODgwMDg5NT} and \eqref{eq_total_vorticity_4th} we deduce in view of \cref{propTotalVorticityDerivative}
that for almost every \(t \in \reals\),
\begin{equation*}
  \bigabs{\totalvorticity' (t)}
  \le 
    \C \,
    \abs{\Gamma}
    \,
    \norm{\Gamma}{}
    .
\end{equation*}
The conclusion then follows by integration.
\end{proof}

\subsection{Evolution of a singular vortex}

We are now in position to state and prove the main result of the present work.

\begin{theorem}
[Evolution of the vortex core]
\label{EvolutionLaw}
Let \(\domain\subseteq\plane\) be a bounded domain of class \(C^2\) and \(b \in C^2(\closure{\domain}, (0, +\infty))\).
Assume that \(b\) is constant on each component of \(\partial D\).
If 
\begin{enumerate}[(a)]
  \item 
    \((\velocity^n)_{n > 0}\) is family of weak solutions to the lake equations \eqref{eqLake},
  \item 
    \(\vortex^n (0) \ge 0\) almost everywhere on \(\domain\),
  \item 
  \label{assumption_c}
    there exists \(q_0 \in D\) such that for every \(\eta \in C (\domain)\),
    \begin{equation*}
        \lim_{n \to \infty}
          \frac
            {1}
            {\vortexstrength^n}
          \int_{\domain} 
            \eta 
            \,
            \vortex^n (0) 
        =
          \eta (q_0),
    \end{equation*}
    %
  \item \label{assumption_f}
    \(\displaystyle \sup_{n \in \integers} \frac{1}{\abs{\vortexstrength^n}} \lorentznorm{(\rho^n)^2 \vortex^n (0, \rho^n\cdot)} < +\infty\) where \(\rho^n =       
      \exp
        \bigl( 
          -\frac
            {4\pi\,\energy^n}
            {\vortexstrength^n\ \totalvorticity^n} 
        \bigr)
    \),
    \item
      \(\displaystyle \sup_{n \in \integers} \frac{\norm{\vortexstrength^n}{}}{\abs{\vortexstrength^n}} < + \infty\),
\end{enumerate}
and let \(q_* : \reals \to \domain\) be the unique solution to 
the Cauchy problem
  \begin{equation*} 
    \left\{
      \begin{aligned}
          \dot{q}_*(s) 
        &= 
          -\flipgradient\Bigl(\frac{1}{\depth}\Bigr) \bigl(q_* (s)\bigr)
        & & \text{if \(s \in \reals\)},
        \\
          q_* (0)
        &=
          q_0 ,
      \end{aligned} 
    \right.
  \end{equation*}
then one has, for every \(\varphi \in C^1 (\Bar{\domain})\), uniformly in \(s \in \reals\) over compact subsets,
    \begin{equation*}
        \lim_{n \to \infty}
          \frac
            {1}
            {\vortexstrength^n}
          \int_{\domain} 
            \varphi 
            \,
            \vortex^n (\vortexstrength^n s/\energy^n) 
        =
          \varphi (q_* (s))
    \end{equation*}
    and 
    \begin{equation*}
      \lim_{n \to \infty}
      \frac
      {1}
      {\energy^n}
      \int_{\domain} 
      \varphi 
      \,
      \abs{\velocity^n}^2 (\vortexstrength^n s/\energy^n) 
      =
      \varphi (q_* (s)).
    \end{equation*} 
\end{theorem}  

\Cref{EvolutionLaw} implies immediately \cref{truncatedEvolutionLaw}. Indeed, it suffices to observe that by \eqref{eq_def_Lorentznorm} the Lorentz norm is controlled by the \(L^\infty\) norm and that \eqref{assumption_f} is trivially satisfied since when \(D\) is simply connected, \(\nbislands = 0\) and \(\norm{\Gamma}{} = \abs{\Gamma}\) by definition in \eqref{eq_norm_Gamma}.

\begin{proof}
[Proof of \cref{EvolutionLaw}]
We first note that by \cref{velocityIntegralExpansion}, and \eqref{eq_definition_regular_part} and by \cref{proposition_Energy_Formula}, we have for every \(\delta > 0\) such that \(\overline{B (q_0, \delta)} 
\subset \domain\),
\begin{multline*}
\frac{1}{4\pi}
\ln \frac{1}{2\delta}
\Biggl(\int_{\overline{B (q_0, \delta)}} \sqrt{b} \, \vortex^n (0)\Biggr)^2 
\le 
\int_{B (q_0, \delta)}
\frac{1}{4 \pi} \ln 
\frac{1}{\abs{y - x}}
    \sqrt{b (x) b (y)}\, \vortex^n (0, x) \,\vortex^n (0, y)\dif y \dif x\\
\\
\le 
\frac{1}{2}
\int_{\domain} \vortex \,\mathcal{K}_b [\vortex] 
+ \C \norm{\vortexstrength^n}{}^2 \le \energy^n (0) + \C \norm{\vortexstrength^n}{}^2.
\end{multline*}
so that by the assumptions \eqref{assumption_c} and \eqref{assumption_f}, for every \(\delta > 0\) small enough we have 
\[
\liminf_{n \to \infty} \frac{\energy^n}{(\vortexstrength^n)^2} \ge \frac{1}{4\pi}
\ln \frac{1}{2\delta}
\]
and thus 
\begin{equation}
    \label{assumption_d}
    \lim_{n \to \infty} \frac{\energy^n}{(\vortexstrength^n)^2} = + \infty.
  \end{equation}

By definition of \(\rho^n (t)\) in \eqref{defTypscale}, we observe that for each \(n \in \integers\) and \(t \in \reals\),
\begin{equation*}
    \rho^n (t) 
  = 
    \exp 
      \Bigl( 
        - \frac
          {4\pi\,\energy^n}
          {\vortexstrength^n\ \totalvorticity^n (t)}
      \Bigr)
   \le 
      \exp 
        \Bigl( -\frac{4\pi\,\energy^n}{\abs{\vortexstrength^n}^2 \sup_{\domain} b}\Bigr),
\end{equation*}
and thus by \eqref{assumption_d}, we have \(\rho^n \to 0\) uniformly on \(\reals\) as \(n \to \infty\).
Moreover, we have by definition of \(\rho^n (t)\) and by \cref{conservationDepth} for every \(n \in \integers\) and \(t \in \reals\),
\begin{equation}
\label{eq_aigh2eiyohtaumashushieSe}
    \strabs{
      \ln 
        \frac
          {\rho^n (t)}
          {\rho^n (0)}
    }
  =
    \frac
      {
        4\pi\,
        \energy^n 
        \,
        \abs{\totalvorticity^n (t) - \totalvorticity^n (0)}
      }
      {
        \vortexstrength^n
        \, 
        \totalvorticity^n (0)
        \,
        \totalvorticity^n (t)
      }
  \le
    \frac       
      {
        \Cl{cst_id8x90R}
        \,
        \energy^n \abs{t}
      }
      {\vortexstrength^n} . 
\end{equation}
By \cref{propositionLorentzScaling}, by \eqref{eq_aigh2eiyohtaumashushieSe},
and then \cref{proposition_transport_Lorentz}, this implies that for every \(n \in \integers\) and \(t \in \reals\)
\begin{equation}
\label{eq_Cobu9eipez}
\begin{split}
    \rho^n (t)^2
    \,
    \lorentznorm{\vortex^n (t, \rho^n (t) \cdot)}
& \le \rho^n (0)^2
        \lorentznorm{\vortex^n (t, \rho^n (0) \cdot)}
      + 
        \left(\ln\frac{\rho^n (t)}{\rho^n (0)}\right)_+ \norm{\omega_n}{L^1}\\
 & \le 
  \rho^n (0)^2
        \lorentznorm{\vortex^n (t, \rho^n (0) \cdot)}
      + 
        \Cr{cst_id8x90R} 
        \energy^n \abs{t}
 \\
  &\le 
    \C  
    \Bigl(
        \rho^n (0)^2 \lorentznorm{\vortex^n (0, \rho^n (0) \cdot)}
      + 
        \energy^n \abs{t}\bigr)
    \Bigr)\\
  &\le 
    \C
    \,
    \abs{\vortexstrength^n}
\Bigl(1 + \frac{\energy^n \abs{t}}{\abs{\vortexstrength^n}}\Bigr),
\end{split}
\end{equation}
in view of our assumption \eqref{assumption_f}. Since $\rho^n(t)$ uniformly converges
to $0$ as $n\to\infty$, one can choose $R^n(t)\defeq 1/\sqrt{\rho^n(t)}$ for sufficiently
large $n\in\integers$ in \cref{propositionConfinementCenter}. For every \(S > 0\), there exists then \(\delta > 0\) 
such that if \(n \in \integers\) is large enough and if \(\energy^n \abs{t}/\vortexstrength^n \le S\), then \(q^n (t) \in \domain\) and \(\dist (q^n (t), \partial \domain) \ge \delta \).

Let \(\eta\in C^\infty(\domain)\) be a positive function bounded by \(1\),
such that \(\eta(x) = 0\) if \(\distance(x,\boundary\domain)\leq\delta/3\) and \(\eta(x) = 1\) if \(\distance(x,\boundary\domain)\geq 2\delta/3\).
We define the truncated center of vorticity \(\Tilde{q}^n : \reals \to \reals^2\) by setting for each \(t \in \reals\) and \(n \in \integers\),
  \begin{equation*} 
      \Tilde{q}^n(t) 
    \defeq 
      \frac{1}{\vortexstrength^n}
      \int_{\domain}
        \eta(x)
        \,
        x
        \, 
        \vortex^n (t,x)
        \dif x .
  \end{equation*}
We observe that for every \(n\) large enough, \(\rho_*^n(t)(R^n (t), \rho^n (t)) \le \frac{\delta}{3}\) and
\begin{equation*}
    \int_{\dist (x, \partial \domain)  \le \delta} \vortex^n
  \le 
    \int_{\domain\setminus B\big(q (t), \rho_*^n (t) (R^n (t), \rho^n (t)) \big)}
      \vortex^n (t, x) 
      \dif x  .
\end{equation*}
By \cref{proposition_cvort_conc}, we obtain
  \begin{equation}
    \label{eqefiAn3XRwSir}
    \begin{split}
        \big| q^n (t)- \Tilde{q}^n(t)\big| 
      &
      \leq 
        \frac{1}{\vortexstrength^n}
          \int_{\domain\setminus B\big(q (t), \rho_*^n (t) (R^n (t), \rho^n (t)) \big)}
          \vortex^n(t,x)\dif x 
      \\
      &
      \le 
        \frac{\C}{\ln \bigl(R^n(t)\bigr)}
        \biggl( \rho^n (t)^2 
            \frac{\lorentznorm{\vortex^n(t, \rho^n (t) \cdot)}}{\vortexstrength^n}
          + 
            1
        \biggr)
.
    \end{split}
  \end{equation}
By \eqref{eq_Cobu9eipez} and by the choice of $R^n(t)$ it follows that 
\((\Tilde{q}^n (\vortexstrength^n \cdot/\energy^n) - q^n (\vortexstrength^n \cdot/\energy^n))_{n \in \integers}\)
converges uniformly to \(0\) over \([-S, S]\).

By \cref{propTotalVorticityDerivative}, we have for almost every \(t \in \reals\),
  \begin{equation*} 
        \dot{\Tilde{q}}^n (t)
      =
         \frac{1}{\vortexstrength^n}\Bigg(
          \int_\domain
            \frac{\flipgradient \mathcal{K}_b[\vortex^n (t)] \cdot \nabla \xi}{b}
            \ 
            \vortex^n (t) 
        + 
          \sum_{i = 1}^\nbislands
            \Gamma_i^n
            \int_{\domain} \frac{\flipgradient \psi \cdot \nabla \xi}{b} 
            \
            \vortex^n (t)\Bigg),
  \end{equation*}
where the vector field \(\xi \in C^\infty_c (\domain, \reals^2)\) is defined for each \(x \in \domain\) by \(\xi (x) \defeq x\, \eta (x)\).
In view of \cref{propositionIntegrationByParts},
we have 
\begin{equation}
\label{eq_4IApVpHBMC3}
\biggabs{
 \dot{\Tilde{q}}^n (t)
    - 
      \frac{1}{2\vortexstrength^n}
      \int_{\domain} 
        \frac{\flipgradient\depth \cdot \gradient \xi}{\depth^2}\,
      \vortex^n (t) \,
      \mathcal{K}_b [\vortex^n (t)]\,
}
  \le 
    \C 
    \norm{\vortexstrength^n}{}.
\end{equation}

Now we observe that since \(\nabla b\) is Lipschitz-continuous and since \(\xi (x) = x\) if \(\distance (x, \partial \domain) \ge \delta\), we have for every \(x \in \domain\) and \(y\) such that \(\dist (y, \partial \domain) \ge \delta\),
\[
    \biggabs{
        \frac{\flipgradient\depth (x) \cdot \nabla \xi (x)}{\depth(x)^2}
      - 
        \frac{\flipgradient b (y)}{b (y)^2} 
    }
  \le 
    \Cl{cst_Oovei9Josh}
    \abs{x - y}
\]
and thus 
\begin{multline}
\label{eq_IJ8Ntv0dug7V}
    \biggabs{
      \int_{\domain} 
        \Bigl(
            \frac{\flipgradient\depth \cdot \nabla \xi}{\depth^2}
          - 
            \frac{\flipgradient b (q^n (t))}{b^2 (q^n (t))} 
        \Bigr)
        \vortex^n (t) 
        \,
        \mathcal{K}_b [\vortex^n(t)]
        \,
      }\\
  \le 
  \Cr{cst_Oovei9Josh}
  \biggl(
    \int_{\domain} 
      \abs{x - q^n (t)}
      \,
      \abs{\vortex^n (t,x)} 
      \dif x\biggr)
      \,
    \norm
      {\mathcal{K}_b [\vortex^n](t)}
      {L^\infty (\domain)}.
\end{multline}
We have by a direct bound
\begin{equation}
\label{eq_HRVTP4DLWHE}
    \int_{\domain \cap B (q^n (t), \rho_*^n (t) (R^n (t), \rho^n (t)))} 
      \abs{x - q^n (t)}
      \,
      \abs{\vortex^n (t,x)}
      \dif x
  \le 
    \C
    \,
    \rho_*^n (t) (R^n (t), \rho^n (t))
    \,
    \abs{\vortexstrength^n}
\end{equation}
and by \cref{proposition_cvort_conc},
\begin{multline}
\label{eq_GfKt6hBrkpuu}
      \int_{
            \domain 
          \setminus 
            B (q^n (t), \rho_*^n (t) (R^n (t), \rho^n (t)))} 
        \abs{x - q^n (t)}
        \,
        \abs{\vortex^n(t,x)} 
        \dif x
  \\\le 
    \frac
      {\C}
      {\ln R^n(t)}
    \bigl( 
        \rho^n (t)^2 
          \lorentznorm{\vortex(\rho^n (t) \cdot)} 
        + 
          \abs{\vortexstrength^n} 
    \bigr).
\end{multline}
Thus we have by \eqref{eq_IJ8Ntv0dug7V}, \eqref{eq_HRVTP4DLWHE}, \eqref{eq_GfKt6hBrkpuu} and by \cref{controlCondition},
\begin{multline}
\label{eq_weLtgetDzpZI}
    \strabs{
      \int_{\domain} 
        \Bigl(
            \frac{\flipgradient\depth \cdot \nabla \xi}{\depth^2}
          - 
            \frac{\flipgradient b (q^n (t))}{b^2 (q^n (t))} 
        \Bigr)
        \vortex^n (t) 
        \,
        \mathcal{K}_b [\vortex^n(t)]
        \,}\\
  \le 
    \C
    \,
    \biggl( 
        \bigl(
            \rho_*^n (t) (R^n(t), \rho^n (t) )
        \bigr)
        \,
        \abs{\vortexstrength^n}
      + 
        \frac{1}{\ln R^n(t)}
        \,
        \Bigl( 
            \rho^n (t)^2 
            \lorentznorm{\vortex^n (t, \rho^n (t) \cdot)} 
          + 
            \abs{\vortexstrength^n}
        \Bigr)
      \biggr).
\end{multline}
Finally by \cref{proposition_Energy_Formula}, we have 
\begin{equation}
\label{eq_Eetho1ooth9shu7meegainei}
\strabs{
        \energy^n - 
        \vortex^n (t) 
        \,
        \int_{\domain} \mathcal{K}_b [\vortex^n(t)]
        \,}
        \le \C \norm{\Gamma^n}{}^2.
\end{equation}
Summarizing \eqref{eq_4IApVpHBMC3}, \eqref{eq_weLtgetDzpZI} and \eqref{eq_Eetho1ooth9shu7meegainei}, we conclude that 
\begin{equation*}
    \strabs{
        \frac{\vortexstrength^n}{\energy^n} \dot {\Tilde{q}}^n (\vortexstrength^n s/\energy^n) 
      - 
        \frac{\nabla^\perp b (\Tilde{q}^n (\vortexstrength^n s/\energy^n))}{b^n (\Tilde{q}^n (\vortexstrength^n s/\energy^n))^2}
    } 
  \to 
    0,
\end{equation*}
uniformly over \(s \in [-S, S]\).

This implies in turn that \(\Tilde{q}^n (\vortexstrength^n \cdot/\energy^n)$
converges uniformly on compact subsets of $\reals$ to \(q_*\).
Finally, we conclude that \(q^n (\vortexstrength^n \cdot/\energy^n) \to q_*\) uniformly over compact subsets of \(\reals\).
By \eqref{eq_GfKt6hBrkpuu}  the narrow convergence of vorticity measures follows.

The convergence of energy density measures then follows from \cref{proposition_energy_concentration}.
\end{proof}

\section{Open problems}

\label{section_Problems}

The present work has given a first description of the asymptotic vortex dynamics for the lake equations \eqref{eqLake}.
The setting in which we have been working does not cover the whole spectrum of physically relevant situations and suggests for future research some problems that we could not tackle with the techniques that we have developped here.

A first problem would be to determine whether \cref{EvolutionLaw} holds when shore of the lake is a beach rather than a cliff, that is when \(b\) goes smoothly to \(0\) on the boundary.

\begin{openproblem}
Does a single vortex follow asymptotically the level lines of the depth \(b\) when \(\inf_{\domain} b = 0\)? 
\end{openproblem}

Whereas in our proof the assumption that \(\inf_\domain b > 0\) plays a role in the construction of the Green function and in keeping control on the Lorentz norm, stationary results cover the case of where the depth \(b\)
behaves like a power of the distance function close to the boundary \citelist{\cite{deValeriola_VanSchaftingen_2013}*{\S 3}\cite{Dekeyser1}\cite{Dekeyser2}}
(seel also chapters~5,~6).

\begin{openproblem}
  \label{problem_oox3aishoteejooZ7}
Does a single vortex follow asymptotically the level lines of the depth \(b\) when the domain is unbounded?  
\end{openproblem}

The boundedness of the domain and of \(b\) is used mainly in the construction and estimates on the Green functions.

The probably most accessible case would be when \(\domain = \reals^2\) and \(b\) is constant outside a compact set; an interesting result would cover the case where \(\domain = [0, + \infty) \times \reals\), with \(b (r, z) = r\), corresponding to the construction of vortex rings for the three-dimensional Euler equations (see \cite{Benedetto_Gaglioti_Marchioro_2000}). 

The solution of \cref{problem_oox3aishoteejooZ7} would also show that the evolution of clifford tori in the binormal curvature flow \cite{Khesin_Yang}.

Another problem would be the case of non-constant boundary values of \(b\).
\begin{openproblem}
Does a single vortex follow asymptotically the level lines of the depth \(b\) when \(b\) is not constant on the boundary? 
\end{openproblem}

Currently, the constancy plays a crucial role in the proof and the application of \cref{propositionIntegrationByParts}.

An issue with this setting is that the limiting equation would suggest vortices exiting the domain in finite time. This would not be consistent with the conservation of circulation. A possible solution to this paradox is that the interaction with the boundary at very short range perturbs strongly the asymptotics and makes the law of movement invalid.

When the lake has a flat bottom, that is when \(b\) is constant on some region, our results do not give an interesting description of the movement of the vortices, that occurs on a larger time scale. In analogy with the planar Euler equation, which corresponds to the case where \(b\) is constant on the whole domain, we expect this movement to occur at a time-scale of the order \(1/\Gamma\).

\begin{openproblem}
\label{problem_flat_bottom}
Describe the movement of a single vortex in a flat region of the lake at time scales of the order \(1/\Gamma\).
\end{openproblem}

We expect this to be described by some sort of Green function adapted to the problem. A similar second-order asymptotic description was already given for stationnary vortex pairs~\cite{Dekeyser2}
(see chapter~6).
One question is whether the movement depends only on the shape of the set on which \(b\) is flat or whether it depends fully on \(b\) and on \(\domain\).
The first scenario would be consistent with results for an analogous Ginzburg--Landau problem with discontinuous pinning \cite{DosSantos_Misiats_2011}.

Finally, it would be natural to consider the problem where the vorticity concentrates in several regions.

\begin{openproblem}
Do solutions whose initial vorticity concentrates at several points have these vortex patches following level lines of \(b\)?
\end{openproblem}

This situation is not accessible to our proofs because we characterize the size of the vortex region by some global integral quantities.

An issue raised by this problem would be possible collision of vortices moving on the same line. 
They would probably interact at a small scale and produce potentially a vortex pair whose movement might be governed by a different equation and might have a different characteristic timescale. A similar related problem would be the description of vortex pairs.

\appendix

\section{Weak solutions of the transport equation}

A first interesting fact is that for a transport equation with no flux through the boundary, it is equivalent to test the equation against compactly supported smooth functions or functions that are smooth up to the boundary.

\begin{proposition}
\label{lemma_Transport_TestFunctions}
Assume that \(\velocity \in L^\infty (W^{1, 1} (\domain))\) and that \(\velocity \cdot \normal = 0\) on \(\partial \domain\) in the sense of traces.
If \(f_0 \in L^\infty (\domain)\) and \(f \in L^\infty ([0, +\infty) \times \domain)\)
satisfy for every \(\varphi \in C^1_c ([0, +\infty)\times \domain)\) the identity 
\[
   \int_0^{+\infty}\int_{\domain} f\,(\partial_t \varphi + \velocity \cdot \nabla \varphi) 
    + \int_{\domain} f_0 \,\varphi (0, \cdot) = 0,
\]
then the identity holds for every \(\varphi \in C^1_c ([0, +\infty) \times \Bar{D})\).
\end{proposition}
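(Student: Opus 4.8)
The plan is to approximate an arbitrary $\varphi \in C^1_c([0,+\infty)\times\Bar{\domain})$ by admissible interior test functions produced by a cutoff near the boundary, and to show that the error created by differentiating the cutoff vanishes in the limit precisely because of the no-flux condition $\velocity\cdot\normal = 0$. Since $\partial\domain$ is of class $C^2$, the distance function $d(x) \defeq \dist(x,\partial\domain)$ is $C^2$ on a collar $\{d < d_0\}$, with $\nabla d = -\normal$ on $\partial\domain$. Fix $\zeta \in C^\infty(\reals)$ with $\zeta = 0$ on $(-\infty,1]$ and $\zeta = 1$ on $[2,+\infty)$, and set $\chi_\epsilon(x) \defeq \zeta(d(x)/\epsilon)$, extended by $1$ outside the collar. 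For $\epsilon$ small, $\chi_\epsilon \in C^1(\Bar{\domain})$ vanishes near $\partial\domain$, so $\varphi\chi_\epsilon \in C^1_c([0,+\infty)\times\domain)$ is admissible. Inserting it into the hypothesis and using $\partial_t\chi_\epsilon = 0$ yields
\[
 \int_0^{+\infty}\!\!\int_\domain f\,\chi_\epsilon\,(\partial_t\varphi + \velocity\cdot\nabla\varphi) + \int_\domain f_0\,\chi_\epsilon\,\varphi(0,\cdot)
 = -\int_0^{+\infty}\!\!\int_\domain f\,\varphi\,\velocity\cdot\nabla\chi_\epsilon.
\]

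The three terms on the left converge as $\epsilon\to0$ to the corresponding terms of the target identity: $\chi_\epsilon\to1$ pointwise and is bounded by $1$, while $f\,\partial_t\varphi$, $f\,\velocity\cdot\nabla\varphi$ (integrable since $f\in L^\infty$, $\nabla\varphi\in L^\infty$ has compact time support, and $\velocity\in L^\infty([0,+\infty),L^1(\domain))$), and $f_0\,\varphi(0,\cdot)$ are all integrable, so Lebesgue's dominated convergence theorem applies. It remains to show that the right-hand side tends to $0$; this is the crux of the argument.

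Writing $\nabla\chi_\epsilon = \epsilon^{-1}\zeta'(d/\epsilon)\,\nabla d$ and setting $g(t,\cdot) \defeq \velocity(t)\cdot\nabla d \in W^{1,1}(\domain)$, the support of $\zeta'(d/\epsilon)$ lies in the shell $\{\epsilon \le d \le 2\epsilon\}$, so
\[
 \Bigabs{\int_0^{+\infty}\!\!\int_\domain f\,\varphi\,\velocity\cdot\nabla\chi_\epsilon}
 \le \frac{\norm{f}{L^\infty(\domain)}\,\norm{\varphi}{L^\infty}\,\norm{\zeta'}{L^\infty}}{\epsilon}
   \int_0^{T}\!\!\int_{\{\epsilon \le d \le 2\epsilon\}}\abs{g(t,x)}\dif x\dif t,
\]
where $[0,T]$ contains the time support of $\varphi$. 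The key point is that, because $\nabla d = -\normal$ on $\partial\domain$ and $\velocity(t)\cdot\normal = 0$ in the sense of traces, $g(t,\cdot)$ has vanishing trace for a.e.\ $t$. Hence Hardy's inequality (valid for $W^{1,1}$ functions with zero trace on a $C^2$ bounded domain) gives $g(t,\cdot)/d\in L^1(\domain)$ with $\int_\domain \abs{g(t,\cdot)}/d \le C\norm{\velocity(t)}{W^{1,1}(\domain)}$, and since $\epsilon^{-1}\le 2/d$ on the shell,
\[
 \frac{1}{\epsilon}\int_{\{\epsilon \le d \le 2\epsilon\}}\abs{g(t,x)}\dif x
 \le 2\int_{\{d \le 2\epsilon\}}\frac{\abs{g(t,x)}}{d(x)}\dif x .
\]
For a.e.\ $t$ the right-hand side tends to $0$ as $\epsilon\to0$ (a fixed $L^1$ integrand over the shrinking set $\{d\le 2\epsilon\}$, whose limit is the null set $\partial\domain$), while it is dominated uniformly in $\epsilon$ and $t$ by $2C\norm{\velocity}{L^\infty([0,+\infty),W^{1,1}(\domain))}$, which is integrable over $[0,T]$. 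A second application of dominated convergence in $t$ then shows the whole right-hand side vanishes in the limit, and letting $\epsilon\to0$ in the displayed identity yields the claim.

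The main obstacle is exactly this boundary-layer term: a naive cutoff contributes a factor $\epsilon^{-1}$ concentrated on a shell of width $\epsilon$, and the estimate is saved only by upgrading the vanishing of the normal trace of $\velocity$ to the quantitative bound $\epsilon^{-1}\int_{\{\epsilon\le d\le 2\epsilon\}}\abs{\velocity\cdot\nabla d}\to 0$, uniformly integrable in $t$, via Hardy's inequality. Everything else is routine dominated convergence.
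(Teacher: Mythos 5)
Your argument is correct and follows essentially the same route as the paper: cut off near the boundary, pass to the limit in the main terms by dominated convergence, and control the boundary-layer term $\int f\varphi\,\velocity\cdot\nabla\chi_\epsilon$ using the vanishing normal trace of $\velocity$. The only cosmetic difference is that you make the trace argument explicit via Hardy's inequality for zero-trace $W^{1,1}$ functions, whereas the paper bounds the same shell integral directly by $\int_{\{\dist(\cdot,\partial\domain)\le 1/n\}}\abs{\nabla\velocity}$ and applies dominated convergence on the shrinking collar; these are two phrasings of the same estimate.
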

\begin{proof}
\resetconstant
We consider a map \(\theta \in C^1 ((0, + \infty))\) such that \(\theta = 0\) on \((0,\frac{1}{2})\) and \(\theta (t) = 1\) on \([1, +\infty)\) and we define for each \(n \in \integers\), the function \(\chi_n : D \to \reals\) for each \(x \in \domain\) by \(\chi_n (x) \defeq \theta (n \dist (x, \partial D))\). 
By the smoothness assumption on \(\domain\), \(\chi_n \in C^1_c (\domain)\).
Since \(\velocity \cdot \normal = 0\) in the sense of traces, we have for every \(T \in [0, +\infty)\),
\[
\int_0^T
  \int_{\domain} \abs{\velocity \cdot \nabla \chi_n}
  \le \C \int_0^T \int\limits_{\substack{x \in \domain\\ \dist (x, \partial \domain) \le \frac{1}{n}}} \abs{\nabla \velocity},
\]
and thus by Lebesgue's dominated convergence theorem, 
\begin{equation}
\label{eq_Saj2os5pou}
  \lim_{n \to \infty} \int_0^{T} \int_{\domain} \abs{\velocity \cdot \nabla \chi_n} =0.
\end{equation}

For each \(\varphi \in C^1_c ([0, +\infty) \times \Bar{\domain})\) and every \(n \in \integers\), we take \(\chi_n \varphi \in C^1_c ([0, +\infty) \times \domain)\) as test function, and we obtain by assumption
\[
    \int_0^{+\infty}
      \int_{\domain} 
        \chi_n
        \,
        f
        \,
        (\partial_t \varphi + \velocity \cdot \nabla \varphi) 
      + 
        \int_{\domain} 
          \chi_n 
          \, 
          f_0 
          \,
          \varphi (0, \cdot) 
  = 
    - \int_0^{+\infty}
      \int_{\domain} 
        \varphi\, f\, \velocity \cdot \nabla \chi_n;
\]
the conclusion follows by letting \(n \to \infty\), and using \eqref{eq_Saj2os5pou}.
\end{proof}

The flow \(\velocity\) can be integrated following DiPerna and P.-L. Lions \cite{DiPerna_Lions_1989} in order to provide solutions to the corresponding transport problem.

\begin{proposition}
\label{proposition_DiPerna_Lions}
Let \(b \in C^1(\Bar{\domain}, (0, +\infty))\) and assume that the velocity
field satisfies \(\velocity \in L^1_{\mathrm{loc}} ([0, +\infty), W^{1, 1} (\domain) \cap L^\infty (\domain))\).
If \(\velocity \cdot \normal = 0\) in the sense of traces and if \(\divergence (b\, \velocity) = 0\) in \(\domain\) almost everywhere, then there is a unique Borel-measurable function 
\(X : [0, +\infty) \times [0, +\infty) \times \domain \to \domain\), such that 
\begin{enumerate}[(i)]
  \item \label{it_Hah8arohr0} the map \((s, t) \in [0, +\infty)^2 \mapsto X (s, t, \cdot)\) is continuous for the convergence in measure,
  \item \label{it_xoh8iethaC} for every \(r, s, t \in [0, + \infty)\) and almost every \(x \in \domain\), one has \(X (s, t, x)= X (s, r, X (r, t, x))\),
  \item \label{it_acoo0eLie0} for every function \(f_0 \in L^\infty (\domain)\) and every \(s, t \in [0, +\infty)\), one has
  \[
      \int_{\domain} 
        f_0 (X (s, t, x)) 
        \, 
        b (x) 
        \dif x 
    =  
      \int_{\domain} 
        f_0 (x) 
        \,
        b (x) 
        \dif x ,
  \]
 \item \label{it_hooseeSh3Y} for almost every \(x \in \domain\), 
\[
  X (s, t, x) = x + \int_t^s \velocity (r, X (r, t, x)) \dif r.
\]
\end{enumerate}
Moreover, 
for every \(f_0 \in L^\infty (\domain)\), \(f_0 \circ X(0, \cdot)\) is the unique function \(f \in L^\infty ([0, +\infty) \times \domain)\) 
that satisfies for every \(\varphi \in C^1_c ([0, +\infty)\times \domain)\), 
\[
   \int_0^{+\infty}\int_{\domain} f(\partial_t \varphi + \velocity \cdot \nabla \varphi) 
    + \int_{\domain} f_0 \,\varphi (0, \cdot) = 0
\]
and \(f \in C([0, +\infty), L^1 (\domain))\). 
\end{proposition}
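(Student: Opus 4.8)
The plan is to deduce the existence and uniqueness of the flow \(X\), together with the well-posedness of the transport equation, from the theory of DiPerna and Lions \cite{DiPerna_Lions_1989}, specialised to the weighted incompressibility \(\divergence(\depth\,\velocity) = 0\) and the no-flux condition \(\velocity\cdot\normal = 0\). The initial observation is that, since \(\depth \in C^1(\closure{\domain},(0,+\infty))\) is bounded away from \(0\), the identity \(\divergence(\depth\,\velocity)=0\) gives \(\divergence \velocity = -\velocity\cdot\gradient\depth/\depth \in L^\infty(\domain)\), so that \(\velocity\) has bounded compressibility for the Lebesgue measure while being divergence-free for the invariant measure \(\depth\dif x\). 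The proof then rests on the two pillars of the theory: the renormalisation of bounded weak solutions of the transport equation and the stability of Lagrangian flows under regularisation.

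First I would prove uniqueness of bounded weak solutions, which is the analytic core. Mollifying the equation in the interior and using the commutator estimate of DiPerna and Lions---licit because \(\velocity \in L^1_{\mathrm{loc}}([0,+\infty),W^{1,1}(\domain))\)---one shows that any bounded weak solution \(f\) is renormalised: \(\beta(f)\) is again a weak solution for every \(\beta\in C^1(\reals)\). The boundary does not interfere, since exactly as in \cref{lemma_Transport_TestFunctions} the cut-offs supported away from \(\partial\domain\) generate an error \(\int \velocity\cdot\gradient\chi_n\) that vanishes thanks to \(\velocity\cdot\normal = 0\), and this also allows test functions smooth up to \(\partial\domain\). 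Taking \(\beta(f)=f^2\) and testing against \(\depth\), the identities \(\divergence(\depth\,\velocity)=0\) and \(\depth\,\velocity\cdot\normal=0\) yield \(\frac{\du}{\du t}\int_{\domain}\depth\,f^2 = 0\); applied to the difference of two solutions with the same datum, this forces uniqueness in \(L^\infty\). Running the same computation over all convex \(\beta\) shows that every quantity \(\int_\domain \depth\,\beta(f(t))\) is conserved, which upgrades the weak continuity of \(t\mapsto f(t)\) into the strong continuity \(f \in C([0,+\infty),L^1(\domain))\).

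Next I would construct the flow and identify it with the solution operator. I would regularise \(\velocity\) into smooth fields \(\velocity_\epsilon\) that remain tangent to \(\partial\domain\)---using the \(C^2\) regularity of \(\partial\domain\) to flatten it locally before mollifying, so that \(\velocity_\epsilon\cdot\normal = 0\) is preserved---with \(\velocity_\epsilon\to\velocity\) in \(L^1_{\mathrm{loc}}(W^{1,1})\) and a uniform \(L^\infty\) bound. The classical Cauchy--Lipschitz flows \(X_\epsilon\) of \(-\velocity_\epsilon\) then map \(\closure{\domain}\) into itself, satisfy the cocycle identity, and by Liouville's formula distort \(\depth\dif x\) by a factor governed by \(\divergence(\depth\,\velocity_\epsilon)\), which tends to \(0\). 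The stability theory of \cite{DiPerna_Lions_1989} provides the compactness needed to pass to the limit: \(X_\epsilon(s,t,\cdot)\) converges in measure, locally uniformly in \((s,t)\), to a map \(X\) for which properties \eqref{it_Hah8arohr0} and \eqref{it_hooseeSh3Y} are inherited from the approximation, the cocycle property \eqref{it_xoh8iethaC} passes to the limit, and the measure preservation \eqref{it_acoo0eLie0} follows from \(\divergence(\depth\,\velocity)=0\). Since \(f_0\circ X(0,\cdot)\) is a bounded weak solution with datum \(f_0\), the uniqueness established above both identifies it as the unique solution and, applied to \(X\) itself, yields uniqueness of the flow.

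The main obstacle is the interplay of the boundary with the DiPerna--Lions machinery, which is classically set on \(\plane\) or on a torus. Two routes present themselves. One may extend \(\velocity\) by \(0\) outside \(\domain\): because \(\velocity\cdot\normal = 0\) keeps the normal trace continuous while only the tangential trace jumps, the distributional divergence of the extension has no singular part and stays in \(L^\infty\), so Ambrosio's renormalisation theorem for \(BV\) fields with absolutely continuous divergence applies. I would prefer the intrinsic route, which stays within the Sobolev framework of \cite{DiPerna_Lions_1989} by absorbing every boundary contribution into cut-offs that disappear by \(\velocity\cdot\normal = 0\), as in \cref{lemma_Transport_TestFunctions}. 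The delicate point there is to regularise \(\velocity\) while simultaneously preserving tangency to \(\partial\domain\) and the control on \(\divergence(\depth\,\velocity)\); this is precisely where the \(C^2\) regularity of \(\partial\domain\) and of \(\depth\) enters.
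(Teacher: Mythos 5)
Your proposal is correct and follows essentially the same route as the paper: both hinge on the observation that \(\divergence \velocity = -\velocity \cdot \nabla b / b \in L^\infty(\domain)\), reduce everything to the DiPerna--Lions renormalization and stability theory (which the paper simply cites as Theorem~III.2 and Corollary~II.2 of \cite{DiPerna_Lions_1989}, whereas you re-derive its main steps), and absorb the boundary into cut-offs that vanish thanks to \(\velocity \cdot \normal = 0\), exactly as in \cref{lemma_Transport_TestFunctions}. The only point where you genuinely diverge is the invariance of the measure \(b \dif x\) in \eqref{it_acoo0eLie0}: the paper obtains it a posteriori by testing the transport equation against \(\varphi(t,x) = b(x)\,\theta(t)\) and using \(\divergence(b\,\velocity) = 0\), while you derive it from Liouville's formula at the level of the regularized flows; both arguments are sound, and the paper's variant has the advantage of not requiring a tangency-preserving regularization of \(\velocity\).
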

As a corollary of the above representation formula, the potential vorticity
$\vortex(t)/\depth$ at any time $t\geq 0$ is a rearrangement of the
initial potential vorticity $\vortex_0/\depth$, in the sense of the
weighted Lebesgue measure $\dif\mu(x)=\depth(x)\dif x$.

Note that,
a priori, the statements only make sense when the function \(f_0\) is Borel measurable; the proposition implies then that \(X (s, t, \cdot)\) preserves Lebesgue null sets and thus allows one to extend the statement to Lebesgue-measurable functions.

\begin{proof}[Proof of \cref{proposition_DiPerna_Lions}]
We first observe that \(\divergence \velocity = \velocity \cdot \nabla (\ln b)\) almost everywhere on \(\domain\) and thus \(\divergence \velocity \in L^\infty (\domain)\). The existence and the properties \eqref{it_Hah8arohr0}, \eqref{it_xoh8iethaC} and \eqref{it_hooseeSh3Y} of \(X\) follow from the DiPerna--Lions theory \cite{DiPerna_Lions_1989}*{Theorem III.2}, as does the characterization of solutions to the transport equations and the continuity of the latter \cite{DiPerna_Lions_1989}*{Corollary II.2}.
By \cref{lemma_Transport_TestFunctions}, the transport equation holds for each test function \(\varphi \in C^1_c ([0, +\infty) \times \Bar{\domain})\).

Given \(f_0 \in L^\infty (\domain)\) as an initial data, we observe that the function \(f  : [0, +\infty) \times \domain \to \reals\) defined for each \((t, x) \in [0, +\infty) \times \domain\) by \(f (t, x)\defeq f_0 (X(t, 0, x))\) satisfies the transport equation. By taking \(\varphi \in C^1_c ([0, +\infty) \times \Bar{\domain})\) defined for each \(t, x \in [0, +\infty) \times \Bar{\domain})\) by \(\varphi (t, x) \defeq b (x) \theta (t)\), with \(\theta \in C^1_c ((0, + \infty))\) as test function we have 
\begin{multline}
    \int_{0}^{+\infty} \theta' (t) \biggl(\int_{\domain} f_0 (X (t, 0, \cdot)) \, b\biggr) \dif t
    + \theta (0) \int_{\domain} f_0 \,b 
  \\
  =
    -\int_{0}^{+\infty} \biggl( \int_{\domain} f (X (t, 0, \cdot))\, \divergence (b\,\velocity (t)) \biggr)\dif t
  = 
    0,
\end{multline}
since \(\divergence (b\, \velocity (t)) = 0\) almost everywhere in \(\domain\) for almost every \(t \in [0, +\infty)\) and the conclusion follows.
\end{proof}

\section{Regularity of solutions with smooth initial data}
\label{appendix_regularity}
We prove that when the initial vorticity is smooth enough, then weak solutions of the vorticity formulation of the lake equations have some regularity.

\begin{proposition}
\label{proposition_classical}
Assume that \(k \in \integers\) and \(\alpha \in (0, 1)\), that \(\domain\) is of class \(C^{k + 1}\) and that \(b \in (C^2 \cap C^{k + 1, \alpha}) (\domain)\).
If \(\vortex_0 \in C^{k, \alpha} (\Bar{\domain}, \reals)\) and if  \((\vortex, \velocity) \in L^\infty ([0, +\infty) \times \domain, \reals) \times L^\infty( [0, +\infty), L^2 (\domain, \reals^2))\) is a weak solution to the vortex formulation of the lake equations, then for every \(T > 0\), \(\vortex \in C^{k, \alpha} ([0, T] \times \Bar{\domain})\) and 
\(\velocity \in L^\infty ([0, T], C^{k + 1, \alpha} (\bar{\domain}, \reals^2)) \cap C^{k, \alpha} ([0, T] \times \Bar{\domain}, \reals^2)\). 
\end{proposition}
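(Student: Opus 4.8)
The plan is to run a bootstrap between the two structural features of the system: the elliptic reconstruction of the velocity from the vorticity, and the transport of the potential vorticity \(\omega/b\) along the flow. Since weak solutions are unique (as recalled after \cref{def_weak_solution_vortex}), it suffices to establish the regularity for the solution produced by the flow of \cref{proposition_DiPerna_Lions}: writing \(f = \omega/b\), the potential vorticity solves \(\partial_t f + \velocity \cdot \gradient f = 0\), so that \(\omega(t) = b\,(\omega_0/b)\circ X(t,0,\cdot)\), where \(X\) is the measure-preserving flow of \(\velocity\). I would argue by induction on \(k\), proving at each level first that \(\omega(t)\) stays bounded in \(C^{k,\alpha}(\Bar{\domain})\) on \([0,T]\), and then upgrading this to joint space--time regularity.

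For the inductive input I would use the elliptic estimate for the reconstruction operator. For fixed \(t\), the stream function \(\psi(t) = \mathcal{K}_b[\omega(t)] + \sum_i \Gamma_i \psi_i\) solves \(-\divergence(b^{-1} \gradient \psi) = \omega\) with the boundary conditions of \eqref{eq_stream_function_problem}; since \(b^{-1} \in C^{k+1,\alpha}\) is bounded away from \(0\) and \(\domain\) is of class \(C^{k+1}\), Schauder theory (the \(C^{k,\alpha}\) analogue of \cref{proposition_stream_function_Dirichlet,proposition_Green_stream_function}) gives \(\psi(t) \in C^{k+2,\alpha}(\Bar{\domain})\) with \(\norm{\psi(t)}{C^{k+2,\alpha}} \le C \norm{\omega(t)}{C^{k,\alpha}} + C\norm{\Gamma}{}\), and hence \(\velocity(t) = \flipgradient \psi(t)/b \in C^{k+1,\alpha}(\Bar{\domain})\) with the same control. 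In particular \(\velocity(t)\) is Lipschitz in space, so \(X(t,0,\cdot)\) is a genuine \(C^{k+1,\alpha}\) diffeomorphism whose derivatives obey \(\norm{D X(t,0,\cdot)}{C^{k,\alpha}} \le \exp\bigl(C\int_0^t \norm{\velocity(s)}{C^{k+1,\alpha}} \dif s\bigr)\). Composing, \(\omega(t)\) lies in \(C^{k,\alpha}(\Bar{\domain})\), and feeding the elliptic bound back into the flow estimate yields a Grönwall inequality for \(t \mapsto \norm{\omega(t)}{C^{k,\alpha}}\) that closes on \([0,T]\). The base case \(k = 0\) works the same way once one notes that \(\gradient \velocity \in C^{0,\alpha}\) is already controlled by \(\omega \in C^{0,\alpha}\), so no logarithmic loss occurs.

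It then remains to promote the uniform-in-time spatial bounds to joint Hölder regularity on \([0,T]\times\Bar{\domain}\). For this I would exploit that the equation trades each time derivative for one spatial derivative: from \(\partial_t \omega = -\divergence(\omega\,\velocity)\) and the spatial regularity just obtained, \(\partial_t \omega \in L^\infty([0,T], C^{k-1,\alpha})\). More precisely, differentiating the flow relation of \cref{proposition_DiPerna_Lions} in time expresses \(\partial_t X\) through \(\velocity\) evaluated along the flow, so \(X\) is jointly \(C^{k,\alpha}\) in \((t,x)\) whenever \(\velocity \in L^\infty_t C^{k+1,\alpha}_x\). Composition then gives \(\omega \in C^{k,\alpha}([0,T]\times\Bar{\domain})\), and the reconstruction \(\velocity = \flipgradient\psi/b\) inherits joint \(C^{k,\alpha}\) regularity in \((t,x)\) while retaining the extra spatial derivative, that is \(\velocity \in L^\infty([0,T], C^{k+1,\alpha}(\Bar{\domain})) \cap C^{k,\alpha}([0,T]\times\Bar{\domain})\).

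The main obstacle is carrying the Schauder estimates \emph{up to the boundary} in a way compatible with the mixed boundary and circulation conditions of \eqref{eq_stream_function_problem}, while only using the domain regularity \(C^{k+1}\); this is where the decomposition of \cref{proposition_Green_stream_function} must be made quantitative at the Hölder level. A secondary difficulty is the bookkeeping for the joint space--time Hölder continuity near the corner \(\{0\}\times\Bar{\domain}\), together with the verification, via \cref{proposition_DiPerna_Lions} and uniqueness, that the given weak solution indeed coincides with the flow-transported one, so that the \emph{a priori} bootstrap applies to it.
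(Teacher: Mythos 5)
Your overall architecture --- the transport representation \(\omega(t) = \tfrac{b}{b\circ X(t,\cdot)}\,\omega_0\circ X(t,\cdot)\) from \cref{proposition_DiPerna_Lions}, combined with a Schauder bootstrap on the reconstruction \(\velocity = \flipgradient\psi/b\) and the regularity of the Lagrangian flow --- is the same as the paper's, and your inductive step for \(k \ge 1\) together with the passage to joint space--time regularity matches the argument built on \cref{lemma_flow_regularity}. The gap is in the base case, precisely where you assert that ``no logarithmic loss occurs.'' Two things go wrong there. First, for \(t > 0\) the only a priori information is \(\omega(t) \in L^\infty(\domain)\); the H\"older continuity of \(\omega(t)\) is what you are trying to prove, and to propagate it along the flow you need \(X(t,\cdot)\) to be Lipschitz, hence \(\velocity(t)\) to be Lipschitz, hence \(\omega(t)\) to be better than \(L^\infty\) --- the argument is circular as stated. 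Second, even granting the qualitative regularity, the inequality you propose to close, namely \(\norm{\omega(t)}{C^{0,\alpha}} \le A \exp\bigl(C\alpha\int_0^t \norm{\omega(s)}{C^{0,\alpha}}\dif s\bigr)\), obtained by feeding the Schauder bound \(\norm{\velocity(s)}{C^{1,\alpha}} \le C\norm{\omega(s)}{C^{0,\alpha}} + C\norm{\Gamma}{}\) into the flow estimate, is of Riccati type: it only yields control up to a time of order \(1/(C\alpha A)\) and does not close on an arbitrary interval \([0,T]\).

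The missing ingredient is the quasi-Lipschitz estimate: the velocity generated by a merely \emph{bounded} vorticity satisfies \(\abs{\nabla\mathcal{K}_b[\omega](x)-\nabla\mathcal{K}_b[\omega](y)} \le C\,\abs{x-y}\ln\tfrac{2\diam\domain}{\abs{x-y}}\) with a constant controlled by the \emph{conserved} quantity \(\norm{\omega}{L^\infty}\) (\cref{lemma_log_Lipschitz}); establishing it requires extending the classical planar Euler estimate to the weighted kernel \(G_\domain(x,y)\sqrt{b(x)b(y)} + R_b(x,y)\), which is where the hypothesis \(b \in C^2\) enters. Combined with the Osgood-type Gr\"onwall lemma (\cref{lemma_log_Lipschitz_regularity}), this shows that \(X(t,\cdot)\) is H\"older continuous with the time-decaying exponent \(\alpha e^{-Ct}\), globally on \([0,T]\) and with no blow-up, whence \(\omega(t) \in C^{0,\alpha e^{-Ct}}(\Bar{\domain})\) with a degraded but positive exponent. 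Only then does Schauder give \(\velocity(t) \in C^{1,\alpha e^{-Ct}} \subset C^{0,1}\), the flow becomes genuinely \(C^1\), and composing recovers \(\omega(t) \in C^{0,\alpha}\) with the full exponent; from that point on your bootstrap runs as written.
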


When \(k = 0\), \cref{proposition_classical} is due to Huang \cite{Huang_2013}*{Theorem 4.1}.

Our proof follows the same strategy as proofs of the regularity of solutions of the planar Euler equations \cite{Marchioro_Pulvirenti_1994}*{\S 2.4} (see also \cite{Kato_1967}*{\S 3.1}).

The first tool that we use is the fact that the velocity field generated by a bounded vorticity field satisfies a bound known as quasi-Lipschitz bound \citelist{\cite{Kato_1967}*{Lemma 1.4}\cite{Marchioro_Pulvirenti_1994}*{Lemma 3.1}} or logarithmically Lipschitz \cite{Haroske_2000}.

\begin{lemma}
\label{lemma_log_Lipschitz}
There exists a constant \(C > 0\) such that for every \(\vortex \in L^\infty (\domain)\) and every \(x, y \in \domain\), one has 
\[
    \bigabs{
        \nabla\mathcal{K}_b [\vortex] (x) 
      -
        \nabla\mathcal{K}_b [\vortex] (y) 
    }    
  \le
    C\,
      \abs{y - x} \ln \frac{2\diam \domain}{\abs{y - x}}.
\]
\end{lemma}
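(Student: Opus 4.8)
The plan is to reduce the statement to the classical quasi-Lipschitz estimate for the Laplacian (Kato \cite{Kato_1967}, Marchioro--Pulvirenti \cite{Marchioro_Pulvirenti_1994}) and to prove the latter by a near/far splitting of the Green kernel. First I would absorb the weight \(\depth\) into an \(L^\infty\) right-hand side. Set \(\psi \defeq \mathcal{K}_b[\omega]\). By \cref{proposition_Green_stream_function} one has \(\psi \in W^{2,p}(\domain)\) for every \(p<+\infty\), and the supercritical Sobolev embedding (exactly as in the proof of \cref{propositionRegularity}) gives \(\norm{\nabla \psi}{L^\infty(\domain)}\le C\norm{\omega}{L^\infty(\domain)}\). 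Expanding the defining equation \(-\divergence(\depth^{-1}\nabla\psi)=\omega\) yields \(\Delta \psi = -\depth\,\omega + \depth^{-1}\,\nabla \depth\cdot\nabla\psi =: F\), and since \(\depth\in C^2(\Bar\domain,(0,+\infty))\) is bounded away from \(0\) with bounded gradient, \(F\in L^\infty(\domain)\) with \(\norm{F}{L^\infty(\domain)}\le C\norm{\omega}{L^\infty(\domain)}\). It therefore suffices to prove the quasi-Lipschitz bound for \(\nabla\psi\) when \(\psi\) solves \(\Delta\psi = F\), \(F \in L^\infty(\domain)\), together with the stream-function boundary conditions.

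Next I would peel off the boundary behaviour. Writing \(w(x)\defeq -\int_\domain \greenlaplace(x,y)\,F(y)\dif y\) for the zero-Dirichlet solution of \(\Delta w=F\), the remainder \(h \defeq \psi - w\) is harmonic, vanishes on \(\partial\domain_0\) and is constant on each island \(\partial\island_i\), with all the constants bounded by \(C\norm{\omega}{L^\infty(\domain)}\). Since the boundary datum is constant on each analytic boundary curve, after the \(C^2\) conformal change of variables \(\Phi\) of \cref{proposition_symmetric_gradient_estimate_boundary} the function \(h\) is harmonic with piecewise-constant data on circles, hence (by reflection across the circles) \(C^2\) up to the boundary; thus \(\nabla h\) is Lipschitz with constant \(\le C\norm{\omega}{L^\infty(\domain)}\) and contributes a term better than quasi-Lipschitz, which can be discarded.

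The heart of the matter is then the bound for \(\nabla w(x)=-\int_\domain \nabla_x\greenlaplace(x,y)\,F(y)\dif y\). Fix \(x_1,x_2\in\domain\) and put \(d\defeq\abs{x_1-x_2}\); for \(d\) comparable to \(\diam\domain\) the inequality is immediate from \(\norm{\nabla w}{L^\infty}\le C\norm{F}{L^\infty}\), so I assume \(d\) small and split \(\domain\) into the near set \(B\defeq\domain\cap B(x_1,2d)\) and the far set \(\domain\setminus B\). On \(B\) I estimate the two kernels separately using \(\abs{\nabla_x\greenlaplace(x,y)}\le C/\abs{x-y}\) from \cref{proposition_Gradient_bound}; since \(\int_{B(x_1,2d)}\abs{x_1-y}^{-1}\dif y\le Cd\) and \(B\subset B(x_2,3d)\), the near contribution is \(\le C\norm{F}{L^\infty}\,d\). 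On \(\domain\setminus B\), where \(\abs{x_1-y}\ge 2d\), the mean value theorem along the segment \([x_1,x_2]\) together with the second-order bound \(\abs{\nabla_x^2\greenlaplace(z,y)}\le C/\abs{z-y}^2\) gives \(\abs{\nabla_x\greenlaplace(x_1,y)-\nabla_x\greenlaplace(x_2,y)}\le Cd/\abs{x_1-y}^2\); integrating and using \(\int_{2d\le\abs{x_1-y}\le\diam\domain}\abs{x_1-y}^{-2}\dif y\le C\ln(\diam\domain/d)\) bounds the far contribution by \(C\norm{F}{L^\infty}\,d\,\ln(\diam\domain/d)\). Adding the two and recalling \(\norm{F}{L^\infty}\le C\norm{\omega}{L^\infty}\) gives the asserted estimate.

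The one input not already recorded, and the main obstacle, is the uniform second-derivative bound \(\abs{\nabla_x^2\greenlaplace(x,y)}\le C/\abs{x-y}^2\) on \(\domain\times\domain\). In the interior it follows at once from the decomposition \(\greenlaplace(x,y)=\frac{1}{2\pi}\ln\frac{1}{\abs{x-y}}+H_\domain(x,y)\) of \eqref{eq_definition_regular_part} and the interior smoothness of \(H_\domain\); the difficulty is uniformity up to \(\partial\domain\), where \(\nabla^2 H_\domain\) need not be bounded. I would obtain it as in the proof of \cref{proposition_symmetric_gradient_estimate_boundary}: from the explicit half-plane and disk kernels one checks directly that \(\abs{\nabla^2_x\greenlaplace[\mathbb{R}^2_+]}\) and \(\abs{\nabla^2_x\greenlaplace[\mathbb{D}^2]}\) are \(\le C/\abs{x-y}^2\) (the image charge sits at distance \(\ge\abs{x-y}\)), and transfers this to \(\domain\) through the \(C^2\) conformal map \(\Phi\), the error \(\greenlaplace-\greenlaplace[\mathbb{D}^2]\circ\Phi\) being controlled near the boundary exactly as there. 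As a cleaner alternative one can avoid pointwise kernel bounds entirely: the sharp elliptic estimate \(\norm{\psi}{W^{2,p}(\domain)}\le Cp\,\norm{\omega}{L^\infty(\domain)}\) (the Calder\'on--Zygmund constant grows linearly in \(p\)) combined with Morrey's inequality \(\abs{\nabla\psi(x_1)-\nabla\psi(x_2)}\le C\abs{x_1-x_2}^{1-2/p}\norm{\nabla^2\psi}{L^p(\domain)}\) with \(p\)-independent constant, optimised over \(p\simeq\ln\frac1d\), reproduces the factor \(d\ln\frac1d\) and handles the interior, the boundary and the islands at once.
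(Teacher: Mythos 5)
Your proposal is correct in outline but follows a genuinely different route from the paper. The paper works directly on the kernel decomposition of \cref{velocityIntegralExpansion}: it splits \(\nabla\mathcal{K}_b[\omega](x)\) into the weighted Green term \(\int_\domain\nabla G_{\domain}(x,z)\,\omega(z)\sqrt{b(x)b(z)}\dif z\), the term produced by differentiating \(\sqrt{b(x)}\), and the \(\nabla R_b\) term; the first is handled by citing the classical quasi-Lipschitz lemma of \cite{Marchioro_Pulvirenti_1994}, the second by the Lipschitz continuity of \(\nabla b\) together with \cref{proposition_Gradient_bound}, and the third by unwinding \(R_b=S_b+Q_b\) and re-expressing \(\nabla S_b\) through an integral representation to which the first two estimates apply again. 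You instead absorb the weight into the right-hand side --- writing \(\Delta\psi=-b\,\omega+b^{-1}\nabla b\cdot\nabla\psi=:F\) with \(\|F\|_{L^\infty}\le C\|\omega\|_{L^\infty}\) thanks to the a priori \(W^{1,\infty}\) bound on \(\psi\) --- peel off a harmonic remainder carrying the island constants, and prove the quasi-Lipschitz bound for the zero-Dirichlet Newtonian potential by near/far splitting. This reduces everything to the constant-coefficient core in one stroke (the treatment of the \(S_b\) term is the most laborious part of the paper's proof), at the price of re-proving the classical lemma that the paper simply cites.

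Two steps need shoring up. First, in the far-field estimate the segment \([x_1,x_2]\) need not stay inside \(\domain\) (the islands and the outer boundary make \(\domain\) non-convex), so the mean-value argument must be run along a path in \(\domain\) of length comparable to \(\abs{x_1-x_2}\) that stays at distance comparable to \(\abs{x_1-y}\) from \(y\) --- or be avoided altogether via your Morrey alternative, whose constants are insensitive to convexity. Second, that alternative rests on the Calder\'on--Zygmund constant growing only linearly in \(p\); this is true, but it is a refinement of \cite{Gilbarg_Trudinger_1998}*{theorem 9.15} as invoked in \cref{proposition_stream_function_Dirichlet}, which does not track the \(p\)-dependence, so it would have to be justified by following the constant through the boundary-flattening. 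Similarly, the pointwise bound \(\abs{\nabla_x^2 G_\domain(x,y)}\le C/\abs{x-y}^2\) up to a merely \(C^2\) boundary requires second-derivative control of the correction \(\Theta\) in the conformal-map argument, whereas the proof of \cref{proposition_symmetric_gradient_estimate_boundary} only bounds first derivatives. None of these is a fatal obstruction --- the paper hides the same boundary difficulties inside its citation of \cite{Marchioro_Pulvirenti_1994} --- but they are where the real work lies.
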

\begin{proof}
\resetconstant
By \cref{velocityIntegralExpansion}, we have 
\begin{multline}
\label{eq_aiPahLoo4s}
    \nabla \mathcal{K}_b[\vortex] (x)
  =
    \int_{\domain} 
      \nabla G_D (x, z)  
      \,
      \vortex (z)
      \,
      \sqrt{b (x) \, b (z)} 
      \dif z
    \\
    + 
      \frac{1}{2} 
      \int_{\domain} 
        G_D (x, z) 
        \,
        \vortex (z) 
        \,
        \sqrt{\frac{b (z)}{b (x)}}
        \,
        \nabla b (x) 
        \dif z
    +
      \int_{\domain} 
        \nabla R_b (x, z) 
        \,
        \vortex (z) 
        \dif z.
\end{multline}
We first have the estimate
\begin{multline}
\label{eq_mooth7Nais}
    \biggabs{
        \int_{\domain} 
          \nabla G_D (y, z)  
          \,
          \vortex (z)
          \,
          \sqrt{b (y)\, b (z)} 
          \dif z -
        \int_{\domain} 
          \nabla G_D (x, z)  
          \,
          \vortex (z)
          \,
          \sqrt{b (x)\, b (z)} 
          \dif z
          }\\
  \le
    \C 
    \,
    \norm{\vortex}{L^\infty (\domain)}
    \,
    \abs{y - x} 
    \ln \frac{2\diam (\domain)}{\abs{y - x}}
\end{multline}
(see \cite{Marchioro_Pulvirenti_1994}*{Lemma 2.3.1 and Appendix 2.3}).
Next, we have 
\begin{multline}
\label{eq_aR3uodidei}
    \biggabs{
      \int_{\domain} 
          G_D (y, z)
          \,
          \vortex (z) 
          \,
          \sqrt{\frac{b (z)}{b (x)}}
          \,
          \nabla b (y) 
          \dif z
          -
        \int_{\domain}
          G_D (x, z)
          \,
          \vortex (z)
          \,
          \sqrt{\frac{b (z)}{b (x)}}
          \,
          \nabla b (x) 
          \dif z
    }
    \\
  \le 
      \frac{\abs{\nabla b (y) - \nabla b (x)}}{\sqrt{b (x)}}
      \,
        \int_{\domain} 
          G_D (x, z)\,
          \abs{\vortex (z)}
          \,
          \sqrt{b (z)}
          \dif z
        \\
    + 
      \C \frac{\abs{\nabla b (x)}}{\sqrt{b (x)}}
    \int_{\domain}  
      \bigabs{G_D (y, z) - G_D (x, z)} 
      \,
      \abs{\vortex (z)}
      \,
      \sqrt{b (z)}
      \dif z
      .
\end{multline}
We compute now by \cref{proposition_Gradient_bound},
\begin{equation}
\label{eq_aeK5eeGhia}
\begin{split}
 \int_{\domain} \abs{G_D (y, z) - G_D (x, z)}\dif z
 &\le \int_0^1 \int_{\domain} \abs{\nabla G_D ((1 - s) x + s y, z)} \,\abs{y - x}\dif z \dif t\\
  & \le 
    \int_0^1 
      \int_{\domain} 
        \frac{\C\, \abs{y - x}}{\abs{(1 - s) x + s y - z}} 
        \dif z 
      \dif s  
      \le
    \C 
    \,
    \abs{y - x}.
    \end{split}
\end{equation}
By \eqref{eq_aR3uodidei} and \eqref{eq_aeK5eeGhia}, we deduce, since the derivative of \(\nabla b\) is bounded, that the gap
\begin{equation}
\label{eq_raiThai9ch}
   \biggabs{
     \int_{\domain} 
        G_D (y, z) 
        \,
        \vortex (z) 
        \,
        \sqrt{\frac{b (z)}{b (y)}}
        \,
        \nabla b (y) 
        \dif z
   -
      \int_{\domain}
        G_D (x, z)
        \,
        \vortex (z)
        \,
        \sqrt{\frac{b (z)}{b (x)}}
        \,
        \nabla b (x) 
        \dif z
 }
\end{equation}
is bounded by
\[ 
  \C
  \,
  \norm{\vortex}{L^\infty (\domain)} 
  \,
  \abs{y - x}. 
\]
In order to control the variation of $\gradient R_b(\cdot,z)$,
we recall that by the proof of \cref{velocityIntegralExpansion}, $R_b=S_b+Q_b$ for some function $Q_b\in C^2(\domain\times\domain)$ defined in \cref{proposition_Green_stream_function}
and for some function $S_b$ constructed in the proof of \cref{proposition_b_Green_function} in such a way that for each
$z\in\domain$, the function \(S_b (\cdot, y) \in W^{1, 2}_0 (D)\) 
is the unique solution of the elliptic problem
\begin{equation*}
  \left\{
  \begin{aligned}
    -\divergence
      \bigl( 
        \depth^{-1}\gradient S_b (\cdot, z) 
      \bigr) 
  & =     
    -\greenlaplace(\cdot, z)
    \,
    \sqrt{\depth(z)}
    \,
    \Bigl(\Delta \frac{1}{\sqrt{b}}\Bigr)
    &
    &
    \text{in \(\domain\)},
    \\
      S_b (\cdot, z) 
    & =
      0
    &
    & \text{on \(\partial \domain\).}
  \end{aligned}
  \right.
\end{equation*}
Hence, in order to conclude the proof, it is sufficient to focus on the $S_b$-term.
We recall that for all $z\in\domain$
the function $S_b$ admits the integral representation
	\[ S_b(x,z)
		= -
    \sqrt{\depth(z)}\,\int_{\domain}G_b(y,x)\greenlaplace(y, z)
    \,
    \Bigl(\Delta \frac{1}{\sqrt{b}}\Bigr)(y)
    \dif y
    ;\]
and moreover, $S_b$ is continuous and symmetric on $\domain\times\domain$
(\cref{proposition_b_Green_function}). Therefore, we have for all $x,z\in\domain$:
	\[ S_b(x,z) = S_b(z,x)
		= -
    \sqrt{\depth(x)}\,\int_{\domain}G_b(y,z)\greenlaplace(y, x)
    \,
    \Bigl(\Delta \frac{1}{\sqrt{b}}\Bigr)(y)
    \dif y
    ,\]
or equivalently, using the symmetry of the Green's function $\greenlaplace$:
	\[ S_b(x,z) = -
    \,\int_{\domain}G_b(y,z)\Big(\sqrt{\depth}\ \greenlaplace(\cdot,y)\Big)(x)
    \,
    \Bigl(\Delta \frac{1}{\sqrt{b}}\Bigr)(y)
    \dif y
    .\]
In particular, a direct application of Fubini's theorem shows that, for almost-every
$x\in\domain$, we have
	\begin{equation} \gradient \int_\domain S_b(x,y)\vortex(y)\dif y
		= \int_\domain\vortex(y)\bigg(
	\int_\domain G_b(z,y)\Big(\Delta\frac{1}{\sqrt{\depth}}\Big)(z)
		\gradient\Big(\sqrt{\depth}\ \greenlaplace(\cdot,z)\Big)(x)\dif z
	\bigg)\dif y. \end{equation}
Since the $L^p$-norms of Green's functions are uniformly bounded on
as $y$ varies in $\domain$~\cite{Weinberger},
we may apply estimates \eqref{eq_mooth7Nais} and \eqref{eq_raiThai9ch}
to obtain
\begin{equation*}
  \biggabs{
    \int_{\domain} 
      \nabla S_b (x, z) 
      \,
      \vortex (z) 
      \dif z
    -
        \int_{\domain} 
      \nabla S_b (x, z)
      \,
      \vortex (z) 
      \dif z
  }
  \le 
    \C 
    \,
    \norm{\vortex}{L^\infty (\domain)}
    \,
    \abs{y - x} 
    \ln \frac{2\diam (\domain)}{\abs{y - x}}
    ,
\end{equation*}
and therefore
\begin{multline}
\label{eq_ieh2thi1Yi}
  \biggabs{
    \int_{\domain} 
      \nabla R_b (x, z) 
      \,
      \vortex (z) 
      \dif z
    -
        \int_{\domain} 
      \nabla R_b (x, z)
      \,
      \vortex (z) 
      \dif z
  }
  \le 
    \C 
    \,
    \norm{\vortex}{L^\infty (\domain)}
    \,
    \abs{y - x} 
    \ln \frac{2\diam (\domain)}{\abs{y - x}}
    .
\end{multline}
The conclusion follows from \eqref{eq_aiPahLoo4s}, \eqref{eq_mooth7Nais}, \eqref{eq_raiThai9ch} and \eqref{eq_ieh2thi1Yi}.
\end{proof}

The next tool is Grönwall type estimate for a logarithmic perturbation of linear growth.

\begin{lemma}
\label{lemma_log_Lipschitz_regularity}
Let \(A, B, C \in [0, +\infty)\), $A<C$ and let $f$ be a continuous function
from $[0,+\infty)$ to $(0,C)$, that is: \(f \in C([0,+\infty), (0, C))\).
If for every \(t \in [0, + \infty)\),
\[
  f (t) \le A + B \int_0^t f (s) \ln \frac{C}{f (s)} \dif s,
\]
then for every \(t \in [0, + \infty)\),
\[
  f (t)
  \le C \exp \Bigl(- \ln \tfrac{C}{A} e^{-B t}\Bigr).
\]
\end{lemma}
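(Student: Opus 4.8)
The plan is to reduce the integral inequality to an ordinary differential inequality for the right-hand side and then to linearise it by a double logarithm. I would set
\[
  g(t) \defeq A + B \int_0^t f(s) \ln \frac{C}{f(s)} \dif s,
\]
so that \(g \in C^1([0,+\infty))\), \(g(0) = A\), \(f(t) \le g(t)\) for every \(t\), and
\[
  g'(t) = B\, f(t) \ln \frac{C}{f(t)} \ge 0
\]
since \(f\) takes its values in \((0,C)\); in particular \(g\) is nondecreasing. I would also record that the claimed upper bound \(F(t) \defeq C \exp(-\ln\frac{C}{A}\, e^{-Bt})\) is exactly the solution of the autonomous Cauchy problem \(F' = B F \ln\frac{C}{F}\), \(F(0) = A\), and that \(A \le F(t) < C\) for all \(t\). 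It therefore suffices to prove the comparison \(g \le F\), since then \(f \le g \le F\) gives the conclusion.

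The heart of the argument is the differential inequality
\[
  g'(t) \le B\, g(t) \ln \frac{C}{g(t)}.
\]
Writing \(\phi(u) \defeq u \ln\frac{C}{u}\), this amounts to \(\phi(f(t)) \le \phi(g(t))\), which I would deduce from \(f \le g\) together with the monotonicity of \(\phi\). Since \(\phi'(u) = \ln\frac{C}{u} - 1\), the function \(\phi\) is increasing on \((0, C/e)\) and decreasing on \((C/e, C)\), so the comparison \(\phi(f) \le \phi(g)\) is immediate as long as \(g\) (hence \(f\)) stays below \(C/e\). I expect the \emph{main obstacle} to be precisely the behaviour once \(g\) enters the range \((C/e, C)\), where \(\phi\) is no longer monotone and the naive comparison can fail. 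I would handle this by a continuity/bootstrap argument: first establish \(g \le F\) on the maximal initial interval \([0,\tau)\) on which \(g < C/e\), using the standard first-order ODE comparison theorem (legitimate there since \(\phi\) is \(C^1\) and increasing), and then propagate the bound past \(\tau\).

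On any interval where the differential inequality holds and \(g < C\), I would linearise by setting \(z(t) \defeq \ln \ln \frac{C}{g(t)}\), which is well defined because \(A \le g < C\). A direct computation gives
\[
  z'(t) = - \frac{g'(t)}{g(t)\, \ln\frac{C}{g(t)}} \ge -B,
\]
so that \(z(t) \ge z(0) - B t = \ln\ln\frac{C}{A} - Bt\). Exponentiating twice yields \(\ln\frac{C}{g(t)} \ge \ln\frac{C}{A}\, e^{-Bt}\) and hence \(g(t) \le C \exp(-\ln\frac{C}{A}\, e^{-Bt}) = F(t)\), which is the desired estimate. Apart from the loss of monotonicity of \(\phi\) near \(C\) flagged above, everything in this scheme is a routine separation-of-variables computation and a first-order comparison.
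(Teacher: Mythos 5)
Your reduction to the differential inequality and the double-logarithm substitution \(z=\ln\ln\frac{C}{g}\) reproduce, in more detail, what the paper's one-line proof means by ``comparison'' with the explicit solution \(F(t)=C\exp(-\ln\frac{C}{A}\,e^{-Bt})\) of \(F'=BF\ln\frac{C}{F}\), \(F(0)=A\); up to the first time \(g\) reaches \(C/e\) your argument is complete and correct. The genuine gap is exactly the step you flag and then defer: ``propagate the bound past \(\tau\).'' This step cannot be carried out, because once \(g\) exceeds \(C/e\) the comparison \(\phi(f)\le\phi(g)\) for \(\phi(u)=u\ln\frac{C}{u}\) really does break down, and in fact the conclusion of the lemma fails in that regime. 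Take \(B=1\), \(C=e\), \(A=1\), and let \(f\equiv 1\) on \([0,\tfrac32]\), then rise linearly to \(\tfrac52\) on \([\tfrac32,\tfrac85]\) and stay there. Since \(\phi(1)=1\) is the maximum of \(\phi\) on \((0,e)\), one has \(g(t)=1+t\) on \([0,\tfrac32]\) and \(g(t)\ge\tfrac52\ge f(t)\) afterwards, so the hypothesis holds for every \(t\); yet \(f(\tfrac85)=\tfrac52>e^{1-e^{-8/5}}\approx 2.22=F(\tfrac85)\). So no bootstrap can close the argument: the obstruction you identified is not a technical nuisance but an actual failure of the statement. (The paper's own proof, ``the conclusion follows by comparison,'' silently assumes the monotonicity of \(\phi\) and has the same defect; your proposal at least makes the difficulty visible.)

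The repair is to add the hypothesis \(\sup_{[0,+\infty)}f\le C/e\), or equivalently to restrict to the times \(t\le\frac{1}{B}\ln\ln\frac{C}{A}\) for which \(F(t)\le C/e\); then \(\phi\) is nondecreasing on the ranges of \(f\) and \(g\), your inequality \(\phi(f)\le\phi(g)\) is legitimate for all times, and the computation \(z'\ge -B\) finishes the proof exactly as you wrote it. This weaker lemma suffices for its only use in the paper, namely the H\"older continuity of the flow in \cref{proposition_classical}: there \(f(t)=\abs{X(t,y)-X(t,x)}\le\diam\domain\), and one may replace the constant \(2\diam\domain\) in \cref{lemma_log_Lipschitz} by \(e\,\diam\domain\) at the cost of a harmless multiplicative constant, which places \(f\) in the monotone regime.
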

\begin{proof}
We observe that if the function \(u \in C^1 (\reals,(0, +C))\) satisfies for every \(t \in I\) the equation
\[
 u'(t) = B \, u (t) \ln \frac{C}{u (t)},
\]
then 
\[
 u(t) 
 = C \exp \bigl(- e^{-Bt}\ln \tfrac{C}{u (0)} \bigr)
\]
and the conclusion follows then by comparison.
\end{proof}

We finally rely on the next classical regularity property of Lagrangian flows.

\begin{lemma}
\label{lemma_flow_regularity}
If \(\velocity \in C^{k - 1} ([0, +\infty)\times \Bar{\domain}, \reals^2) \cap C ([0, +\infty), C^{k} (\Bar{\domain}, \reals^2))\) and if the function \(X \in C^1 ([0, +\infty), C(\Bar{\domain}, \Bar{\domain}))\) satisfies
\[
  \left\{
  \begin{aligned}
    \partial_t X (t, x) &= \velocity (t, X (t, x)) & &\text{if \(t \in [0, +\infty)\) and \(x \in \domain\)},\\
    X (0, x) & = x && \text{if \(x \in \domain\)},
  \end{aligned}
  \right.
\]
then \(X \in C^k ([0, +\infty) \times \domain)\).
If moreover \(\velocity \in C^{k - 1, \alpha} ([0, +\infty)\times \Bar{\domain}, \reals^2) \cap L^\infty ([0, +\infty), C^{k, \alpha} (\Bar{\domain}, \reals^2))\), then \(X \in C^{k, \alpha} ([0, +\infty) \times \domain)\).
\end{lemma}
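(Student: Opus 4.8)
The plan is to argue by induction on $k \in \integers$, upgrading the regularity of the flow $X$ one derivative at a time while exploiting the way the flow equation ties together time and space regularity. For the base case $k = 1$, the hypothesis reads $\velocity \in C^0([0,+\infty)\times\Bar{\domain}) \cap C([0,+\infty), C^1(\Bar{\domain}))$. First I would note that $\partial_t X(t, x) = \velocity(t, X(t,x))$ is jointly continuous as a composition of continuous maps, so only the spatial $C^1$ regularity requires work. By the classical theory of smooth dependence of solutions of ordinary differential equations on their initial conditions, since $t \mapsto \velocity(t, \cdot)$ is continuous into $C^1(\Bar{\domain})$, the map $x \mapsto X(t,x)$ is of class $C^1$ and its Jacobian $Y \defeq D_x X$ is the unique continuous solution of the variational equation
\[
  \partial_t Y(t,x) = (D_x\velocity)(t, X(t,x))\, Y(t,x), \qquad Y(0,x) = \Id.
\]
The right-hand side is jointly continuous because $D_x\velocity \in C([0,+\infty), C^0(\Bar{\domain}))$ and $X$ is continuous; hence $Y$ is jointly continuous, and $X \in C^1([0,+\infty)\times\domain)$.

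For the inductive step I would augment the system. Assuming the conclusion at order $k - 1$, and given $\velocity \in C^{k-1} \cap C([0,+\infty), C^k)$, I would study the coupled flow of the pair $(X, Y)$ with $Y = D_x X$, driven by the vector field
\[
  W(t, x, y) \defeq \bigl(\velocity(t, x),\; (D_x\velocity)(t, x)\, y\bigr).
\]
Differentiating $\velocity$ once costs exactly one derivative, so in its spatial variables $(x, y)$ one has $W \in C^{k-2} \cap C([0,+\infty), C^{k-1})$, the dependence on $y$ being linear and hence smooth. This is precisely the regularity required at order $k-1$, so the same induction applies to the augmented flow (whose $Y$-component exists globally since its equation is linear) and yields $(X, Y) \in C^{k-1}$. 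In particular $D_x X = Y \in C^{k-1}$, while $\partial_t X = \velocity(t, X)$ is $C^{k-1}$ as a composition of $C^{k-1}$ maps; since all first-order derivatives of $X$ are then of class $C^{k-1}$, I conclude $X \in C^k([0,+\infty)\times\domain)$. The Hölder refinement follows the same induction verbatim, replacing $C^j$ by $C^{j,\alpha}$ throughout and using that compositions of $C^{j,\alpha}$ maps, and solutions of linear ODEs with $C^{j,\alpha}$ coefficients, remain $C^{j,\alpha}$.

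The main subtlety to handle carefully is the bookkeeping of joint versus purely spatial regularity: the hypotheses are deliberately asymmetric, demanding only $C^{k-1}$ joint regularity of $\velocity$ but full $C^k$ spatial regularity, and this asymmetry is exactly what makes the induction close. Through the identity $\partial_t X = \velocity(t, X)$ and the chain rule, each time derivative of $X$ is converted into a spatial derivative of $\velocity$; the one extra spatial derivative compensates for this loss, so that differentiating the augmented system $k-1$ times never exceeds the available regularity. The crux is therefore to verify that $W$ genuinely has regularity one order lower—and no less—than $\velocity$, in both the joint and the time-parametrized norms; once this calibration is checked, the remainder of the induction is routine.
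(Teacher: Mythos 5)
Your argument is correct and follows essentially the same route as the paper: the paper disposes of the \(C^k\) part by citing the classical theory (Coddington--Levinson), whose proof is precisely the variational-equation induction you write out, and it obtains the H\"older part from a Gr\"onwall estimate on \(D_x^k X\), which is the top-order instance of your induction. The one place where \emph{verbatim} is slightly too quick is the H\"older step at the lowest order: composing two \(C^{0,\alpha}\) maps generally yields only \(C^{0,\alpha^2}\), so to keep the exponent \(\alpha\) in \(\partial_t X = \velocity(t, X)\) and in the coefficient \((D_x\velocity)(t, X)\) of the variational equation you should invoke the already-established \(C^k\) regularity of \(X\), which makes the inner map Lipschitz --- exactly as the paper does when it first records \(X \in C^k\) and only then appeals to the chain rule for H\"older continuous functions.
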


\begin{proof}
The first part is classical (see for example \cite{Coddington_Levinson_1955}*{\S 1.7}). For the second part, we first have by the first part \(X \in C^{k} (\reals \times \Bar{\domain}, \Bar{\domain})\) and thus by the chain rule for H\"older continuous functions (see for example \cite{Csato_Dacorogna_Kneuss_2012}*{Theorem 16.31}) \(\partial_t X \in C^{k - 1, \alpha} (\reals \times \Bar{\domain}, \Bar{\domain})\).

Next we observe that for every \(T > 0\) and \(t \in [0, T]\), we have 
\begin{multline*}
  \abs{D_x^k X (t, y) - D_x^k X (t, x)}\\
  \le \int_0^t  \abs{D^k f (X (s, y)) [D_x^k X (s, y)]
  - D^k f (X (s, x)) [D_x^k X (s, x)]} \dif s
  + \C \abs{y - x}\\
   \le \C \int_0^t  \abs{D_x^k X (s, y)
  - D^k_x X (s, x)} \dif s
  + \C \abs{y - x}^\alpha,
\end{multline*}
and it follows then from the classical Grönwall inequality that 
\[
  \abs{D_x^k X (t, y) - D_x^k X (t, x)} \le \C \abs{y - x}^\alpha.
  \qedhere 
\]
\end{proof}

\begin{proof}[Proof of \cref{proposition_classical}]
By \cref{propositionRegularity}, \cref{proposition_DiPerna_Lions} is applicable to \(f_0 = \vortex_0/b\) and implies that for every \(t \in \reals\),
\[
    \vortex (t, x) 
  = 
    \frac{b (x)}{b (X (t, x))} 
    \, 
    \vortex_0 (X (t, x)).
\]
By \cref{proposition_DiPerna_Lions} and \cref{lemma_log_Lipschitz}, we have for every \(x, y \in \domain\),
\begin{multline*}
  \abs{X (t, y) - X (t, x)}\\
 \le 
    \abs{y - x} 
  + 
    \C 
    \,
    \Bigg(
    \bigl(\norm{\vortex_0}{L^\infty (\domain)} + \norm{\Gamma}{}\bigr)
    \,
    \int_0^t \abs{X (s, y) - X (s, x)} \ln \frac{2 \diam \domain}{\abs{X (s, y) - X (s, x)}}\dif s\Bigg).
\end{multline*}
It follows then by \cref{lemma_log_Lipschitz_regularity}, that 
\begin{equation*}
 \abs{X (t, y) - X (t, x)}
 \le
 \C
 \exp\Bigl(- \alpha e^{-\Cl{cst_Eemooph9qut} t} \ln \frac{2 \diam \domain}{\abs{y - x}})
 \Bigr)
 =\Cl{cst_Baara0gief} \Bigl(\frac{\abs{y - x}}{2 \diam D}\Bigr)^{ \alpha \exp({- \Cr{cst_Eemooph9qut} t})}.
\end{equation*}
This implies thus that 
\[
 \abs{\vortex (t, y) - \vortex (t, x)}
 \le 
 \C \Bigl(\frac{\abs{y - x}}{2 \diam D}\Bigr)^{\alpha \exp({-\Cr{cst_Eemooph9qut} t})}.
\]
By the representation formula for the velocity \eqref{eq_velocity_reconstruction} and classical regularity estimates \cite{Gilbarg_Trudinger_1998}*{Theorem~6.8}, it follows then that we have the inclusion \(\velocity \in C ([0, +\infty), C^1 (\domain))\).
By classical regularity theory of the Lagrangian flow, this implies that \(X \in C^1 ([0, + \infty) \times \Bar{\domain}, \Bar{\domain})\) and thus by composition \(\vortex \in C^{0, \alpha} ([0, T] \times \domain)\).
By regularity estimates \cite{Gilbarg_Trudinger_1998}*{Theorem~6.8} we have then \(\velocity \in L^\infty ([0, T], C^{1, \alpha} (\bar{\domain}))\) and \(u \in C^{0, \alpha} ([0, T] \times \Bar{\domain})\).

We assume now that \(\velocity \in L^\infty ([0, T], C^{k, \alpha} (\bar{\domain})) \cap C^{k - 1, \alpha} ([0, T] \times \Bar{\domain})\) and that \(\vortex_0 \in C^{k, \alpha} (\Bar{\domain})\). By regularity of the Lagrangian flow (\cref{lemma_flow_regularity}), we have \(X \in C^{k, \alpha} (\domain)\) and thus \(\vortex \in C^{k, \alpha} ([0, T] \times \domain)\). By classical regularity estimates, this implies that  \(\velocity \in L^\infty ([0, T], C^{k + 1, \alpha} (\bar{\domain})) \cap C^{k, \alpha} ([0, T] \times \Bar{\domain})\).
\end{proof}

\begin{bibdiv}

\begin{biblist}

\bib{Banica_Miot_2013}{article}{
   author={Banica, Valeria},
   author={Miot, Evelyne},
   title={Evolution, interaction and collisions of vortex filaments},
   journal={Differential Integral Equations},
   volume={26},
   date={2013},
   number={3--4},
   pages={355--388},
   issn={0893-4983},
}
                
\bib{Benedetto_Gaglioti_Marchioro_2000}{article}{
   author={Benedetto, D.},
   author={Caglioti, E.},
   author={Marchioro, C.},
   title={On the motion of a vortex ring with a sharply concentrated
   vorticity},
   journal={Math. Methods Appl. Sci.},
   volume={23},
   date={2000},
   number={2},
   pages={147--168},
   issn={0170-4214},
   doi={10.1002/(SICI)1099-1476(20000125)23:2<147::AID-MMA108>3.3.CO;2-A},
}

\bib{Berger_Fraenkel_1980}{article}{
   author={Berger, M. S.},
   author={Fraenkel, L. E.},
   title={Nonlinear desingularization in certain free-boundary problems},
   journal={Comm. Math. Phys.},
   volume={77},
   date={1980},
   number={2},
   pages={149--172},
   issn={0010-3616},
}

\bib{Bramble_Payne_1967}{article}{
   author={Bramble, James H.},
   author={Payne, Lawrence E.},
   title={Bounds for the first derivatives of Green's function},
   journal={Atti Accad. Naz. Lincei Rend. Cl. Sci. Fis. Mat. Natur. (8)},
   volume={42},
   date={1967},
   pages={604--610},
}

\bib{Bresch_Metivier_2006}{article}{
  author={Bresch, Didier},
  author={M{\'e}tivier, Guy},
  title={Global existence and uniqueness for the lake equations with
  vanishing topography: elliptic estimates for degenerate equations},
  journal={Nonlinearity},
  volume={19},
  date={2006},
  number={3},
  pages={591--610},
  issn={0951-7715},
  doi={10.1088/0951-7715/19/3/004},
}

\bib{Buhler_Jacobson_2001}{article}{
   author={B\"{u}hler, Oliver},
   author={Jacobson, Tivon E.},
   title={Wave-driven currents and vortex dynamics on barred beaches},
   journal={J. Fluid Mech.},
   volume={449},
   date={2001},
   pages={313--339},
   issn={0022-1120},
   doi={10.1017/S0022112001006322},
}

\bib{Burton_1988}{article}{
   author={Burton, G. R.},
   title={Steady symmetric vortex pairs and rearrangements},
   journal={Proc. Roy. Soc. Edinburgh Sect. A},
   volume={108},
   date={1988},
   number={3-4},
   pages={269--290},
   issn={0308-2105},
   doi={10.1017/S0308210500014669},
}
                
\bib{Camassa_Holm_Levermore_1997}{article}{
   author={Camassa, Roberto},
   author={Holm, Darryl D.},
   author={Levermore, C. David},
   title={Long-time shallow-water equations with a varying bottom},
   journal={J. Fluid Mech.},
   volume={349},
   date={1997},
   pages={173--189},
   issn={0022-1120},
   doi={10.1017/S0022112097006721},
}

\bib{Centurioni_2002}{article}{
   author={Centurioni, L. R.},
   title={Dynamics of vortices on a uniformly shelving beach},
   journal={J. Fluid Mech.},
   volume={472},
   date={2002},
   pages={211--228},
   issn={0022-1120},
   doi={10.1017/S0022112002002252},
}

\bib{Coddington_Levinson_1955}{book}{
   author={Coddington, Earl A.},
   author={Levinson, Norman},
   title={Theory of ordinary differential equations},
   publisher={McGraw-Hill},
   address={New York--Toronto--London},
   date={1955},
   pages={xii+429},
}

\bib{Csato_Dacorogna_Kneuss_2012}{book}{
   author={Csat\'{o}, Gyula},
   author={Dacorogna, Bernard},
   author={Kneuss, Olivier},
   title={The pullback equation for differential forms},
   series={Progress in Nonlinear Differential Equations and their
   Applications},
   volume={83},
   publisher={Birkh\"{a}user/Springer}, 
   address={New York},
   date={2012},
   pages={xii+436},
   isbn={978-0-8176-8312-2},
   doi={10.1007/978-0-8176-8313-9},
}

\bib{DaRios_1906}{article}{
  title={Sul moto d’un liquido indefinito con un filetto vorticoso di forma qualunque},
  author={Da Rios, Luigi Sante}, 
  journal={Rend. Circ. Mat. Palermo},
  volume={22},
  number={1},
  pages={117--135},
  year={1906},
}

\bib{Davila_delPino_Musso_Wei}{article}{
  author={D\'avila, Juan},
  author={del Pino, Manuel},
  author={Musso, Monica},
  author={Wei, Juncheng}, 
  title={Gluing methods for vortex dynamics in Euler flows},
journal={Arch. Ration. Mech. Anal.},
  volume={235}, 
  pages={1467--1530},
  date={2020},
  doi={10.1007/s00205-019-01448-8},
}

\bib{Dekeyser1}{article}{
   author={Dekeyser, J.},
   title={Desingularization of a steady vortex pair in the lake equation},
   eprint={https://arxiv.org/abs/1711.06497},
   date={2017}
}
\bib{Dekeyser2}{article}{
   author={Dekeyser, Justin},
   title={Asymptotic of steady vortex pair in the lake equation},
   journal={SIAM J. Math. Anal.},
   volume={51},
   date={2019},
   number={2},
   pages={1209--1237},
   issn={0036-1410},
   doi={10.1137/18M1170169},
}

\bib{deValeriola_VanSchaftingen_2013}{article}{
   author={de Valeriola, S{\'e}bastien},
   author={Van Schaftingen, Jean},
   title={Desingularization of vortex rings and shallow water vortices by a
   semilinear elliptic problem},
   journal={Arch. Ration. Mech. Anal.},
   volume={210},
   date={2013},
   number={2},
   pages={409--450},
   issn={0003-9527},
   doi={10.1007/s00205-013-0647-3},
}

\bib{DiPerna_Lions_1989}{article}{
   author={DiPerna, R. J.},
   author={Lions, P.-L.},
   title={Ordinary differential equations, transport theory and Sobolev
   spaces},
   journal={Invent. Math.},
   volume={98},
   date={1989},
   number={3},
   pages={511--547},
   issn={0020-9910},
   doi={10.1007/BF01393835},
}

\bib{DosSantos_Misiats_2011}{article}{
   author={Dos Santos, Micka\"{e}l},
   author={Misiats, Oleksandr},
   title={Ginzburg-Landau model with small pinning domains},
   journal={Netw. Heterog. Media},
   volume={6},
   date={2011},
   number={4},
   pages={715--753},
   issn={1556-1801},
   doi={10.3934/nhm.2011.6.715},
}

\bib{Duerinckx_Serfaty}{article}{
   author={Duerinckx, Mitia},
   author={Serfaty, Sylvia},
   title={Mean-field dynamics for Ginzburg-Landau vortices with pinning and
   forcing},
   journal={Ann. PDE},
   volume={4},
   date={2018},
   number={2},
   pages={Art. 19, 172},
   issn={2524-5317},
   doi={10.1007/s40818-018-0053-0},
}

\bib{Friedman_Turkington_1981}{article}{
   author={Friedman, Avner},
   author={Turkington, Bruce},
   title={Vortex rings: existence and asymptotic estimates},
   journal={Trans. Amer. Math. Soc.},
   volume={268},
   date={1981},
   number={1},
   pages={1--37},
   issn={0002-9947},
   doi={10.2307/1998335},
}

\bib{Gilbarg_Trudinger_1998}{book}{
   author={Gilbarg, David},
   author={Trudinger, Neil S.},
   title={Elliptic partial differential equations of second order},
   series={Classics in Mathematics},
   publisher={Springer}, 
  address={Berlin},
   date={2001},
   pages={xiv+517},
   isbn={3-540-41160-7},
}

\bib{Haroske_2000}{article}{
   author={Haroske, Dorothee D.},
   title={On more general Lipschitz spaces},
   journal={Z. Anal. Anwendungen},
   volume={19},
   date={2000},
   number={3},
   pages={781--799},
   issn={0232-2064},
   doi={10.4171/ZAA/980},
}
                
\bib{Helmholtz_1867}{article}{
  volume={33}, 
  date={1867},
  number={226},
  title={On integrals of the hydrodynamical equations, which express vortex-motion},
  author={Helmholtz, H.},
  pages={485--512},
  doi={10.1080/14786446708639824},
  journal={Phil. Mag.},
}

\bib{Huang_2013}{article}{
   author={Huang, Chaocheng},
   title={Global solutions to the lake equations with isolated vortex
   regions},
   journal={Quart. Appl. Math.},
   volume={61},
   date={2003},
   number={4},
   pages={613--638},
   issn={0033-569X},
}

\bib{Jerrard_Seis_2017}{article}{
   author={Jerrard, Robert L.},
   author={Seis, Christian},
   title={On the vortex filament conjecture for Euler flows},
   journal={Arch. Ration. Mech. Anal.},
   volume={224},
   date={2017},
   number={1},
   pages={135--172},
   issn={0003-9527},
   doi={10.1007/s00205-016-1070-3},
}

\bib{Kato_1967}{article}{
   author={Kato, Tosio},
   title={On classical solutions of the two-dimensional nonstationary Euler
   equation},
   journal={Arch. Rational Mech. Anal.},
   volume={25},
   date={1967},
   pages={188--200},
   issn={0003-9527},
   doi={10.1007/BF00251588},
}

\bib{Khesin_Yang}{article}{
  eprint={https://arxiv.org/abs/1902.08834},
  title={Higher-dimensional Hasimoto transform for vortex membranes: counterexamples and generalizations},
  author={Khesin, Boris},
  author={Yang, Cheng},
}

\bib{Kirchhoff_1876}{book}{
  author={Kirchhoff, G.},
  title={Vorlesungen \"uber mathematische Physik},
  publisher={Teubner}, 
  address={Leipzig},
  date={1876},
}

\bib{Koebe_1918}{article}{
  author={Koebe, P.},
  title={Abhandlungen zur Theorie der konformen Abbildung},
  part={IV},
  subtitle={Abbildung mehrfach zusammenhängender schlichter Bereiche auf Schlitzbereiche},
  journal={Acta Math.},
  volume={41}, 
  pages={305--344},
  date={1918},
}
\bib{Krantz_2006}{book}{
   author={Krantz, Steven G.},
   title={Geometric function theory},
   series={Cornerstones},
   subtitle={Explorations in complex analysis},
   publisher={Birkh\"auser},
   address={Boston, Mass.},
   date={2006},
   pages={xiv+314},
   isbn={978-0-8176-4339-3},
   isbn={0-8176-4339-7},
}
               
\bib{Lacave_Pausader_Nguyen_2014}{article}{
   author={Lacave, Christophe},
   author={Nguyen, Toan T.},
   author={Pausader, Benoit},
   title={Topography influence on the lake equations in bounded domains},
   journal={J. Math. Fluid Mech.},
   volume={16},
   date={2014},
   number={2},
   pages={375--406},
   issn={1422-6928},
   doi={10.1007/s00021-013-0158-x},
}

\bib{Levermore_Oliver_1997}{article}{
   author={Levermore, C. David},
   author={Oliver, Marcel},
   title={Analyticity of solutions for a generalized Euler equation},
   journal={J. Differential Equations},
   volume={133},
   date={1997},
   number={2},
   pages={321--339},
   issn={0022-0396},
   review={\MR{1427856}},
   doi={10.1006/jdeq.1996.3200},
}

\bib{Levermore_Oliver_Titi_1996}{article}{
   author={Levermore, C. David},
   author={Oliver, Marcel},
   author={Titi, Edriss S.},
   title={Global well-posedness for models of shallow water in a basin with
   a varying bottom},
   journal={Indiana Univ. Math. J.},
   volume={45},
   date={1996},
   number={2},
   pages={479--510},
   issn={0022-2518},
   doi={10.1512/iumj.1996.45.1199},
}

\bib{Lieb_Loss_2001}{book}{
  author={Lieb, Elliott H.},
  author={Loss, Michael},
  title={Analysis},
  series={Graduate Studies in Mathematics},
  volume={14},
  edition={2},
  publisher={American Mathematical Society},
  address={Providence, R.I.},
  date={2001},
  pages={xxii+346},
  isbn={0-8218-2783-9},
  doi={10.1090/gsm/014},
}

\bib{Lin_1941}{article}{
   author={Lin, C. C.},
   title={On the motion of vortices in two dimensions},
   part={I},
   subtitle={Existence of the
   Kirchhoff-Routh function},
   journal={Proc. Nat. Acad. Sci. U. S. A.},
   volume={27},
   date={1941},
   pages={570--575},
   issn={0027-8424},
}

\bib{Lorentz_1950}{article}{
   author={Lorentz, G. G.},
   title={Some new functional spaces},
   journal={Ann. of Math. (2)},
   volume={51},
   date={1950},
   pages={37--55},
   issn={0003-486X},
   doi={10.2307/1969496},
}

\bib{Marchioro_Pulvirenti_1983}{article}{
   author={Marchioro, Carlo},
   author={Pulvirenti, Mario},
   title={Euler evolution for singular initial data and vortex theory},
   journal={Comm. Math. Phys.},
   volume={91},
   date={1983},
   number={4},
   pages={563--572},
   issn={0010-3616},
}

\bib{Marchioro_Pulvirenti_1984}{book}{
   author={Marchioro, Carlo},
   author={Pulvirenti, Mario},
   title={Vortex methods in two-dimensional fluid dynamics},
   series={Lecture Notes in Physics},
   volume={203},
   publisher={Springer, Berlin},
   date={1984},
   pages={i+137},
   isbn={3-540-13352-6},
}

\bib{Marchioro_Pulvirenti_1994}{book}{
   author={Marchioro, Carlo},
   author={Pulvirenti, Mario},
   title={Mathematical theory of incompressible nonviscous fluids},
   series={Applied Mathematical Sciences},
   volume={96},
   publisher={Springer}, 
   address={New York},
   date={1994},
   pages={xii+283},
   isbn={0-387-94044-8},
   doi={10.1007/978-1-4612-4284-0},
}
                
\bib{Munteanu_2012}{article}{
   author={Munteanu, Ionu{\c{t}}},
   title={Existence of solutions for models of shallow water in a basin with
   a degenerate varying bottom},
   journal={J. Evol. Equ.},
   volume={12},
   date={2012},
   number={2},
   pages={393--412},
   issn={1424-3199},
   doi={10.1007/s00028-012-0137-3},
}

\bib{Oliver_1997}{article}{
   author={Oliver, Marcel},
   title={Classical solutions for a generalized Euler equations in two
   dimensions},
   journal={J. Math. Anal. Appl.},
   volume={215},
   date={1997},
   number={2},
   pages={471--484},
   issn={0022-247X},
   doi={10.1006/jmaa.1997.5647},
}

\bib{Oliver_1997b}{article}{
  journal={Theoret. Comput. Fluid Dynamics},
  date={1997}, 
  volume={9}, 
  number={3--4}, 
  pages={311--324},
  title={Justification of the Shallow-Water Limit for a Rigid-Lid Flow with Bottom Topography},
  author={Oliver, Marcel},
  doi={10.1007/s001620050047},
}

\bib{Peregrine_1998}{article}{
  author={Peregrine, D.H.},
  title={Surf zone currents},
  journal={Theoret. Comput. Fluid Dynamics},
  date={1998},
  volume={10},
  number={1--4},
  pages={295–-309},
  doi={},
}

\bib{Ricca_1996}{article}{
   author={Ricca, Renzo L.},
   title={The contributions of Da Rios and Levi-Civita to asymptotic
   potential theory and vortex filament dynamics},
   journal={Fluid Dynam. Res.},
   volume={18},
   date={1996},
   number={5},
   pages={245--268},
   issn={0169-5983},
   doi={10.1016/0169-5983(96)82495-6},
}

\bib{Richardson_2000}{article}{
   author={Richardson, G.},
   title={Vortex motion in shallow water with varying bottom topography and
   zero Froude number},
   journal={J. Fluid Mech.},
   volume={411},
   date={2000},
   pages={351--374},
   issn={0022-1120},
   doi={10.1017/S0022112099008393},
}

\bib{Routh_1880}{article}{
   author={Routh, E. J.},
   title={Some Applications of Conjugate Functions},
   journal={Proc. Lond. Math. Soc.},
   volume={12},
   date={1880/81},
   pages={73--89},
   issn={0024-6115},
   doi={10.1112/plms/s1-12.1.73},
}
                
\bib{Scheffer_1993}{article}{
   author={Scheffer, Vladimir},
   title={An inviscid flow with compact support in space-time},
   journal={J. Geom. Anal.},
   volume={3},
   date={1993},
   number={4},
   pages={343--401},
   issn={1050-6926},
   doi={10.1007/BF02921318},
}

\bib{Shnirelman_1997}{article}{
   author={Shnirelman, A.},
   title={On the nonuniqueness of weak solution of the Euler equation},
   journal={Comm. Pure Appl. Math.},
   volume={50},
   date={1997},
   number={12},
   pages={1261--1286},
   issn={0010-3640},
   doi={10.1002/(SICI)1097-0312(199712)50:12<1261::AID-CPA3>3.3.CO;2-4},
}

\bib{Shirelman_2000}{article}{
   author={Shnirelman, A.},
   title={Weak solutions with decreasing energy of incompressible Euler
   equations},
   journal={Comm. Math. Phys.},
   volume={210},
   date={2000},
   number={3},
   pages={541--603},
   issn={0010-3616},
   doi={10.1007/s002200050791},
}
                
\bib{VanSchaftingen_Smets_2010}{article}{
  author={Smets, Didier},
  author={Van Schaftingen, Jean},
  title={Desingularization of vortices for the Euler equation},
  journal={Arch. Ration. Mech. Anal.},
  volume={198},
  date={2010},
  number={3},
  pages={869--925},
  issn={0003-9527},
  doi={10.1007/s00205-010-0293-y},
}

\bib{TurkingtonSteady1}{article}{
  author={Turkington, Bruce},
  title={On steady vortex flow in two dimensions},
  part={I},
  journal={Comm. Partial Differential Equations},
  volume={8},
  date={1983},
  number={9},
  pages={999--1030},
  issn={0360-5302},
  doi={10.1080/03605308308820293},
}
\bib{TurkingtonSteady2}{article}{
  author={Turkington, Bruce},
  title={On steady vortex flow in two dimensions},
  part={II},
  journal={Comm. Partial Differential Equations},
  volume={8},
  date={1983},
  number={9},
  pages={1031--1071},
  issn={0360-5302},
  doi={10.1080/03605308308820293},
}
\bib{TurkingtonEvolution}{article}{
   author={Turkington, Bruce},
   title={On the evolution of a concentrated vortex in an ideal fluid},
   journal={Arch. Rational Mech. Anal.},
   volume={97},
   date={1987},
   number={1},
   pages={75--87},
   issn={0003-9527},
   doi={10.1007/BF00279847},
}
\bib{Weinberger}{article}{
   author={Weinberger, H. F.},
   title={Symmetrization in uniformly elliptic problems},
   conference={
      title={Studies in mathematical analysis and related topics},
   },
   book={
      publisher={Stanford Univ. Press, Stanford, Calif.},
   },
   date={1962},
   pages={424--428},
}

\end{biblist}

\end{bibdiv}

\end{document}